\newtheorem{theorem}{Theorem}[section]
\newtheorem{corollary}[theorem]{Corollary}
\newtheorem{conjecture}[theorem]{Conjecture}
\newtheorem{lemma}[theorem]{Lemma}
\newtheorem{proposition}[theorem]{Proposition}
\newtheorem{definition}[theorem]{Definition}
\newtheorem{claim}[theorem]{Claim}
\newtheorem{case}{Case}[theorem]
\newtheorem{question}[theorem]{Question}
\title{Connectivity of contraction-critical graphs}
\author{Michael Lafferty$^{1}$, Runrun Liu$^{2}$,  Martin Rolek$^{3}$,    Gexin Yu$^{1}$}
\address{
$^{1}$\small Department of Mathematics, William \& Mary, Williamsburg, VA, 23185, USA.\\
$^{2}$\small School of Mathematics, Zhejiang Normal University, Jinhua, 321004, China.\\
$^{3}$\small Department of Mathematics, Kennesaw State University, Marietta, GA, 30060 USA.}
\thanks{The research of the second author was supported in part by NSFC, China (No. 12101563) and ZJNSFC, China (No. LQ22A010011). The second author is the corresponding author of this article.}
\email{mmlafferty@wm.edu, 827261672@qq.com, mrolek1@kennesaw.edu, gyu@wm.edu}
\date{\today}
\begin{document}
\maketitle

\begin{abstract}
Contraction-critical graphs came from the study of minimal counterexamples to Hadwiger's conjecture. A graph is $k$-contraction-critical if it is $k$-chromatic, but any proper minor is $(k-1)$-colorable. It is a long-standing result of Mader that $k$-contraction-critical graphs are $7$-connected for $k\ge7$. In this paper, we provide the improvement of Mader's result for small values of $k$. We show that $k$-contraction-critical graphs are $8$-connected for $k\ge17$, $9$-connected for $k\ge29$, and $10$-connected for $k\ge41$. As a corollary of one of our intermediate results, we also prove that every $30$-connected graph is $4$-linked.
\end{abstract}

\section{Introduction}
Graph coloring is a central topic in graph theory.  A graph is properly $2$-colorable if and only if it is bipartite.  For $k\ge 3$, it is NP-complete to decide whether a graph is properly $k$-colorable.  It is fascinating to know what reasons there may be for a graph to have high chromatic number.
Equally interesting is the problem of determining what sort of structures graphs of high chromatic number may contain.
In 1943, Hadwiger~\cite{H43} conjectured that every $k$-chromatic graph has a $K_k$-minor, where a graph $H$ is a minor of a graph $G$ if $H$ can be obtained from a subgraph of $G$ by contracting edges.  This conjecture is one of the deepest conjectures in graph theory.

It is not hard to show that Hadwiger's conjecture holds for $k\le 3$. Hadwiger~\cite{H43} and independently Dirac~\cite{D52} confirmed the case $k=4$. In 1937, Wagner~\cite{W37} proved that the case $k = 5$ is equivalent to the Four Color Theorem.
About 60 years later, Robertson, Seymour and Thomas~\cite{RST93} proved that the case $k = 6$ is also equivalent to the Four Color Theorem.
The conjecture remains open for $k\ge 7$.

A graph $G$ is said to be {\sl $k$-contraction-critical} if $G$ is $k$-chromatic, but any proper minor of $G$ is $(k - 1)$-colorable.   Hadwiger's conjecture is equivalent to the claim that the only $k$-contraction-critical graph is the complete graph $K_k$.
For their value in the study of Hadwiger's conjecture, the connectivity properties of noncomplete contraction-critical graphs have long been examined.
Let $h(k)$ be the largest integer such that every noncomplete $k$-contraction-critical graph is $h(k)$-connected.  Dirac~\cite{D60} initiated the study of connectivity of contraction-critical graphs in 1960 and proved that $h(k) \ge 5$ for $k \ge 5$.
In 1968, Mader~\cite{M68} extended this, and his following deep result has been extensively utilized in proving many results related to Hadwiger's conjecture, see~\cite{J71, KT05, RS17}.

\begin{theorem}[Mader~\cite{M68}]\label{thm:7Conn}
Non-complete $6$-contraction-critical graphs are $6$-connected, and non-complete $k$-contraction-critical graphs are $7$-connected for $k \ge 7$.
That is, $h(6) \ge 6$ and $h(k) \ge 7$ for $k \ge 7$.
\end{theorem}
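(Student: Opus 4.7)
The plan is first to collect two standard facts about any $k$-contraction-critical graph $G$: the minimum degree satisfies $\delta(G) \ge k-1$, and every proper minor of $G$ (in particular $G-e$ and $G/e$ for every edge $e$) is $(k-1)$-colorable. I would also take as given Dirac's earlier theorem $h(k) \ge 5$ for $k \ge 5$, which supplies the base case on top of which the extra one or two units of connectivity must be built.

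Assume for contradiction that $G$ is non-complete, $k$-contraction-critical, and has a separator $S$ with $|S| = s \le 5$ (in the $k = 6$ case) or $|S| = s \le 6$ (in the $k \ge 7$ case). Choose $S$ of minimum size, so that every vertex of $S$ has a neighbor in every component of $G - S$, and fix a separation $(A, B)$ with $A \cap B = S$ in which $A \setminus S$ is exactly one component of $G - S$. The goal is to combine $(k-1)$-colorings of $G[A]$ and $G[B]$ into one of $G$; the well-known obstruction is that arbitrary $(k-1)$-colorings of the two sides need not agree on $S$. The standard remedy is to promote $S$ to a clique inside a suitable minor on each side.

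The crucial step --- and where I expect the real difficulty to lie --- is the following linkage claim: for each side $X \in \{A, B\}$, there is a collection of paths inside $G[X \setminus S]$, one joining the two endpoints of each non-edge of $G[S]$, with the chosen paths pairwise internally vertex-disjoint, so that contracting these paths turns $S$ into a clique in a proper minor of $G$. Given this claim, the criticality of $G$ yields a $(k-1)$-coloring of the enlarged graph on each side in which the vertices of $S$ receive pairwise distinct colors; after permuting color classes on one side to align with the other on $S$, the two partial colorings paste to a $(k-1)$-coloring of $G$, contradicting $\chi(G) = k$.

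Essentially all the real work goes into proving the linkage claim under the hypotheses $|S| \le 6$ and $\delta(G) \ge k-1$. I would split according to the number of non-edges of $G[S]$ and the component structure of $G - S$: if $G[S]$ is nearly complete, only a few paths are needed and a direct Menger argument in an auxiliary graph suffices; if $G[S]$ has many non-edges, one uses the minimum-degree bound together with the fact that every vertex of $S$ touches every component of $G-S$ to construct the paths piece by piece, handling bad configurations (for example, small components of $G-S$ whose neighborhood in $S$ is restricted) by moving them across the cut and reducing to a smaller separator. The case $s = 5$ is close to Dirac's argument and $k \ge 6$ gives just enough slack; the case $s = 6$ genuinely uses $k \ge 7$ to keep $\delta(G) \ge 6$, which is what enables the disjoint $u$--$v$ paths through $B \setminus S$ to be found for every non-edge of $G[S]$ simultaneously.
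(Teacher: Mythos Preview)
This theorem is quoted from Mader and the paper does not reprove it in full, but it does describe Mader's method (Theorem~\ref{thm:MaderS-3} together with the rooted-$K_4$ remark in Section~\ref{remark}) and reproves the key lemma in English as the case $t=3$ of Theorem~\ref{main}. Your outline diverges from that method at exactly the point where the content lies. Everything hinges on your ``linkage claim'': for a minimum separator $S$ with $|S|\le 6$, each side of the separation contains internally disjoint paths, one for every non-edge of $G[S]$, so that contracting them completes $S$ to a clique. Your sketch for this (``Menger in an auxiliary graph'', ``construct the paths piece by piece'', ``move bad components across the cut'') is not a proof and cannot be made into one at this generality: if $S$ is independent of size $6$ you are demanding $\binom{6}{2}=15$ internally disjoint paths on each side, and $\delta(G)\ge k-1$ comes nowhere near supplying that.

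Mader's actual argument avoids this by first bounding $\alpha(G[S])$ through a Kempe-chain recoloring argument (Theorem~\ref{thm:MaderS-3}), not a linkage argument. It shows that any separating set $S$ with $|S|\le k-1$ in a $k$-contraction-critical graph satisfies $\alpha(G[S])\le |S|-4$. For $|S|\le 5$ this forces $S$ to be a clique, already excluded by Dirac, giving $h(6)\ge 6$. For $|S|=6$ and $k\ge 7$ it gives $\alpha(G[S])\le 2$; one then partitions $S$ into three independent pairs and needs only to \emph{knit} each side with respect to that partition---three disjoint connected subgraphs, one containing each pair---a vastly weaker requirement that Mader dispatches via a rooted $K_4$-minor result. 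The independence-number reduction is the idea your proposal is missing.
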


More recent work has focused on improving $h(k)$ for large values of $k$.
Kawarabayashi~\cite{K07} proved the first general result, showing that $h(k) \ge \left\lceil\frac{2k}{27}\right\rceil$.
This was improved by Kawarabayashi and the fourth author~\cite{KY13}, who showed $h(k) \ge \left\lceil\frac{k}{9}\right\rceil$.
Despite this recent progress, it seems hopeless to extend these proofs to get even $h(k) \ge \left\lceil\frac{k}{2}\right\rceil$.

As it is extremely difficult to prove Hadwiger's conjecture, it makes sense to consider the following weaker version of the conjecture:

\begin{conjecture}\label{conj:conn}
Any noncomplete $k$-contraction-critical graph is $k$-connected.  That is, $h(k) \ge k$.
\end{conjecture}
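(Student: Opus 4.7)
The plan is to extend the cutset-plus-linkage strategy of Mader~\cite{M68}, which drives the $10$-connectivity results of the present paper, and to push the target connectivity from a constant all the way to $k$. I would take a noncomplete $k$-contraction-critical graph $G$ of minimum order and assume toward a contradiction that $\kappa(G) \le k - 1$. Fix a minimum vertex cut $S$ with $s := |S| \le k - 1$, and a partition $V(G) \setminus S = A \cup B$ with no $A$--$B$ edge. Contraction-criticality together with the minimality of $S$ forces every vertex of $S$ to have neighbors on both sides, and Menger's theorem supplies $s$ internally disjoint $S$--$S$ paths through $B$.

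The target reduction is the classical one: construct a minor $G_A^+$ of $G$ on vertex set $A \cup S$ in which $S$ spans a clique, obtained by contracting in $G[B \cup S]$ a family of pairwise disjoint connected subgraphs that realize all $\binom{s}{2}$ pairs of vertices of $S$; construct $G_B^+$ symmetrically. Both minors are proper, so by criticality each is $(k - 1)$-colorable. Any two proper $(k - 1)$-colorings of an $s$-clique on the same vertex set agree up to a permutation of color classes, and since $s \le k - 1$ we may relabel the coloring of $G_B^+$ so that it coincides with that of $G_A^+$ on $S$. As $S$ is a vertex cut of $G$, gluing yields a proper $(k - 1)$-coloring of $G$, contradicting $\chi(G) = k$.

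The whole argument therefore reduces to producing $G_A^+$ and $G_B^+$: a pair-linkage problem in $G[B \cup S]$, and symmetrically in $G[A \cup S]$, with prescribed terminals on $S$. For bounded $s$, the linkage tools used in the present paper are enough, which is precisely why they yield constant connectivity bounds. For $s$ growing with $k$, however, the currently available linkage theorems would require connectivity roughly $\Theta(s) = \Theta(k)$ on each side, and there is no reason to expect that $G[A \cup S]$ or $G[B \cup S]$ inherits such connectivity from $G$. This mismatch is the principal obstruction and is the reason Conjecture~\ref{conj:conn} is out of reach of the present methods.

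A plausible way forward is to refuse to demand the full $\binom{s}{2}$-linkage at once. One would link whatever pairs one can through $B$, drop the remaining missing clique edges on $S$ from the minor, and exploit the resulting slack in admissible $S$-colorings on the opposite side, iterating the exchange between the two sides. Each pair that stubbornly refuses to be linked ought to correspond to an even smaller cut inside $B$ (or $A$), contradicting the minimality of $|S|$. Turning this informal exchange into a quantitative induction that survives all the way up to $s = k - 1$, rather than only for small $s$, is the hard part, and appears to require a substantial strengthening of the reducer-type arguments that power the $10$-connectivity result proved here.
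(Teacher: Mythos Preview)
This statement is Conjecture~\ref{conj:conn}, an open problem; the paper does not prove it and explicitly says it ``remains wide open for $k \ge 8$.'' There is therefore no proof in the paper to compare against, and your write-up is not a proof either---you say so yourself when you identify the linkage step as ``the principal obstruction'' and conclude that the conjecture ``is out of reach of the present methods.'' That assessment is accurate and consistent with the paper's own discussion.

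Two remarks on the sketch itself. First, your reduction is slightly garbled: you do not need to realize all $\binom{s}{2}$ pairs on $S$ as a linkage. The device actually used in the paper (and in Mader's argument) is to partition $S$ into maximal independent sets $S_1,\dots,S_t$ and then find, on each side, pairwise disjoint connected subgraphs $C_i \supseteq S_i$ (a \emph{knit}); contracting the $C_i$ yields a clique because distinct $S_i$, $S_j$ already share an edge by maximality. This is weaker than a full $\binom{s}{2}$-linkage, which is why bounding $\alpha(G[S])$ (Theorem~\ref{main}) is the paper's first move. Second, your appeal to Menger for ``$s$ internally disjoint $S$--$S$ paths through $B$'' is not well-posed as stated and is not what is needed; the relevant object is the knit just described.

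None of this rescues the argument, of course: even with the correct reduction, the knit on each side is a linkage-type problem whose known sufficient conditions scale with $|S|$, and there is no mechanism to guarantee that $G[A\cup S]$ or $G[B\cup S]$ meets them when $|S|$ is as large as $k-1$. Your final paragraph's ``iterated exchange'' idea is in the right spirit---it is reminiscent of the color-swapping machinery in the proof of Theorem~\ref{main}---but, as you note, making it work for arbitrary $s$ is exactly the unresolved difficulty.
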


Thus we see Conjecture~\ref{conj:conn} holds for $k \le 7$ and remains wide open for $k \ge 8$.
While Toft~\cite{T72} has shown that any $k$-contraction-critical graph is $k$-edge-connected, a similar generalization of Theorem~\ref{thm:7Conn} for vertex connectivity seems very difficult.
This motivates us to look for ways to improve known values of $h(k)$.

Our main result in this paper is stated below.  

\begin{theorem}\label{main2}
Let $G$ be a $k$-contraction-critical graph.  Then
\begin{itemize}
\item If $k\ge 17$, then $G$ is $8$-connected;
\item If $k\ge 29$, then $G$ is $9$-connected;
\item If $k\ge 41$, then $G$ is $10$-connected.
\end{itemize}
It follows that $h(k)\ge 8$ for $k\ge17$, $h(k)\ge 9$ for $k\ge29$ and $h(k)\ge 10$ for $k\ge41$.
\end{theorem}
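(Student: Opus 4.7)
The plan is to prove each bullet of Theorem~\ref{main2} by contradiction. Fix $t \in \{7,8,9\}$ and the corresponding threshold on $k$ ($k\ge 17,\ 29,\ 41$). Assume $G$ is a noncomplete $k$-contraction-critical graph whose connectivity equals exactly $t$; by Theorem~\ref{thm:7Conn} this is the only residual case. The goal is to produce a $K_k$-minor in $G$, contradicting $k$-contraction-criticality.

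\textbf{Step 1 (Small-cut setup).} Fix a cut $S$ with $|S|=t$ and partition $V(G)\setminus S = A\cup B$ so that there are no edges between $A$ and $B$ and $|A|\le|B|$, choosing $S$ to minimize $|A|$. Standard properties of $k$-contraction-critical graphs (due to Dirac, Mader, and refined by Kawarabayashi--Yu) give $\delta(G)\ge k-1$, force every vertex of $S$ to have a neighbor on both sides, and --- by the minimality of $|A|$ --- yield structural bounds on $G[A\cup S]$, such as a lower bound on the $A$-degree of each $s\in S$ and a lower bound on $|A|$ (and hence $|B|$) growing with $k$.

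\textbf{Step 2 (Coloring extraction).} Contracting $A$ (and, if needed, each component of $G[A]$) to a single vertex produces a proper minor of $G$ that is $(k-1)$-colorable; pulling back gives a $(k-1)$-coloring of $G[B\cup S]$. The symmetric construction yields a $(k-1)$-coloring of $G[A\cup S]$. These two cannot glue into a $(k-1)$-coloring of $G$, so a color-permutation argument on $S$ isolates a minimal set of \emph{demand pairs} $(x_i,y_i)\in S\times S$ that, if identified via disjoint paths through the large side $B$, would merge certain color classes and so push the two local colorings into an actual $K_k$-minor structure.

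\textbf{Step 3 (Linkage through the large side).} Realizing the demand pairs requires finding up to $\lceil t/2\rceil$ internally disjoint paths in $G[B]$ with prescribed endpoints in $S$. For $t=9$ this requires $4$ such paths, which is precisely the setting of the paper's intermediate result: \emph{every $30$-connected graph is $4$-linked}. For $t=7,8$ smaller (and more classical) linkage statements suffice. The thresholds $k\ge 17,\ 29,\ 41$ are what is needed so that $\delta(G)\ge k-1$, together with the forced lower bound on $|B|$, pushes a suitable contraction of $G[B\cup S]$ past the relevant connectivity threshold (e.g.\ $30$ in the hardest case), making the required linkage available.

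\textbf{Step 4 (Assembling the $K_k$-minor; main obstacle).} Contracting each linkage path in $G[B]$ identifies the paired vertices of $S$; combining this with branch sets extracted from the $(k-1)$-coloring of $G[A\cup S]$ (each color class, together with its representative vertex of $S$, forming one branch set) produces a $K_k$-minor of $G$, the desired contradiction. The main obstacle is Step 3: both proving the $4$-linkedness of $30$-connected graphs and, more delicately, verifying that a controlled contraction of $G[B\cup S]$ really attains the required connectivity threshold --- this is where the quantitative thresholds $17$, $29$, $41$ get forced, since the slack between $k-1$ and $t$ must be traded against the linkage parameter. A secondary technical difficulty is Step 2: one must track the coloring classes carefully enough to keep the number of demand pairs at most $\lceil t/2\rceil$ and to guarantee that the branch sets assembled in Step 4 are pairwise adjacent, which typically requires a case analysis according to how vertices of $S$ share colors in the two local colorings.
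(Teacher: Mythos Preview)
Your proposal has several genuine gaps and diverges from the paper's argument in ways that do not work. First, the stated goal of producing a $K_k$-minor is misguided: this would amount to proving Hadwiger's conjecture for the given $k$, which is far beyond what is needed (and beyond current methods). The paper's contradiction is instead a $(k-1)$-coloring of $G$, obtained by combining colorings of the two sides once compatible contractions are available. Second, your Step~3 is unjustified: you assert that ``a suitable contraction of $G[B\cup S]$'' becomes $30$-connected, but no mechanism is given---$G$ itself is only $t$-connected with $t\le 9$, and high minimum degree does not by itself give high connectivity. The paper never claims anything like this; rather, it shows that the pair $(G_1,S)$ is $(k+1)$-\emph{massed}, passes to a \emph{minimal} massed pair, and then uses an extremal argument (Theorem~\ref{lemma2.5}) to locate a vertex whose closed neighborhood is dense enough that Lemma~\ref{knitted1} guarantees a knitted (or $4$-linked) subgraph inside it. The $4$-linked result for $30$-connected graphs is a \emph{corollary} of this machinery, not an input to the main proof. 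Third, your Step~4 is incoherent: color classes are independent sets and cannot serve as branch sets of a clique minor, since different color classes have no reason to be pairwise adjacent.

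Finally, you are missing the actual first key step: the paper applies its generalized Mader theorem (Theorem~\ref{main}) to force $\alpha(G[S])\le 2$. This is what allows $S$ to be partitioned into pairs $S_1,\dots,S_t$ so that the contracted vertices form a clique, which in turn is precisely what makes the two side-colorings compatible on $S$. The thresholds $17,29,41$ are not driven by a connectivity/linkage trade-off as you suggest; they come from the requirement $p=k+1\ge 18,30,42$ in Lemma~\ref{knitted1}, i.e.\ from the density needed to guarantee a $(2,2,2,1)$-knitted, $4$-linked, or $(2,2,2,2,1)$-knitted subgraph in a closed neighborhood.
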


Linkage structure plays an important role in both the study of graph minors and our proof of Theorem~\ref{main2}. For an integer $k \ge 2$, a graph $G$ is {\em $k$-linked} if for every $2k$ vertices $u_1, v_1, u_2, v_2, \ldots, u_k, v_k$, one can find $k$ internally disjoint paths $P_1, \ldots, P_k$ such that $P_i$ connects $u_i$ and $v_i$. Clearly, a $k$-linked graph is $k$-connected.
It has been an interesting problem to determine the function $g(k)$ such that $g(k)$-connected graphs are $k$-linked. Jung~\cite{J70} showed that a $4$-connected graph is $2$-linked, except when $G$ is planar and the vertices $u_1, u_2, v_1, v_2$ are on a face of $G$ in this order, implying $g(2) \le 6$.
Thomas and Wollan~\cite{TW08} showed that a $6$-connected graph on $n$ vertices with at least $5n - 14$ edges is $3$-linked, implying $g(3) \le 10$, and also proved that $g(k)\le 10k$ in general in \cite{TW05}. As a consequence of one of our intermediate results in the proof of Theorem~\ref{main2}, we are able to show the following for the next open case, $k = 4$.

\begin{theorem}\label{thm:4linked}
If $G$ is $30$-connected, then $G$ is $4$-linked. Consequently, $g(4)\le 30$.
\end{theorem}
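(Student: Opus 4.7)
The plan is to derive Theorem \ref{thm:4linked} from the key structural lemma established en route to the $9$-connectivity bound in Theorem \ref{main2}. The strategy, as is standard for this style of linkage result, is to pass through rooted complete-graph minors: $G$ is $k$-linked as soon as, for every choice of $2k$ prescribed terminals, $G$ contains a $K_{2k}$-minor in which each branch set contains exactly one terminal. The justification of this reduction is immediate. Each pair $(u_i,v_i)$ is routed by walking inside the branch set of $u_i$, crossing the guaranteed edge to the branch set of $v_i$, and then walking inside the branch set of $v_i$; the resulting four paths are pairwise internally disjoint because the eight branch sets are.

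Concretely, I would fix a $30$-connected graph $G$ and any eight distinct vertices $u_1,v_1,u_2,v_2,u_3,v_3,u_4,v_4$. The intermediate result---used in the proof of the second bullet of Theorem \ref{main2} to dispose of a putative $8$-separator in a $k$-contraction-critical graph for $k \ge 29$---asserts that every $30$-connected graph contains a rooted $K_8$-minor at any prescribed octet of vertices. Applying this lemma with the eight terminals above produces the rooted minor, and then the routing described in the previous paragraph gives the four internally disjoint paths, establishing that $G$ is $4$-linked and hence that $g(4) \le 30$.

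The main obstacle lives in the intermediate lemma rather than in Theorem \ref{thm:4linked} itself. Producing the rooted $K_8$-minor from $30$-connectivity is the delicate step, because the generic Thomas--Wollan bound $g(k) \le 10k$ only gives $g(4) \le 40$. To shave the constant down to $30$, the intermediate argument must exploit the smallness of $k = 4$ more carefully---presumably by starting from a highly connected dense subgraph or ``web'' produced from the connectivity hypothesis, and then absorbing the eight prescribed roots into its branch sets by a Menger-type fan argument supported by the $30$-connectivity of $G$. Once the rooted-minor lemma is in place, no further work is required: Theorem \ref{thm:4linked} is a one-line corollary via the routing observation above.
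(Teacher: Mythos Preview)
Your reduction to rooted $K_8$-minors is sound in principle, but the intermediate lemma you invoke is not what the paper proves. Nowhere in the argument for the $k\ge 29$ case of Theorem~\ref{main2} does the paper establish that a $30$-connected graph contains a rooted $K_8$-minor at eight prescribed vertices. The actual intermediate results are Theorem~\ref{lemma2.5} and Lemma~\ref{knitted1}: the first says that a $p$-minimal pair $(L,S')$ contains a vertex $v\notin S'$ whose closed neighbourhood $H=N[v]$ is dense in the sense of~\eqref{eq000}, and the second says that for $p=30$ such an $H$ contains a $4$-linked subgraph. Neither statement mentions minors, rooted or otherwise.

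The paper's proof therefore follows a different route from yours. Given the eight terminals $S$, one observes that $(G,S)$ is $30$-massed (minimum degree at least $30$ and no separation of order at most $7$), passes to a $30$-minimal subpair $(G',S)$, applies Theorem~\ref{lemma2.5} to locate the dense neighbourhood $H$, and applies Lemma~\ref{knitted1} with $p=30$ to find a $4$-linked subgraph $L\subseteq H$. The terminals are then connected to $L$ by eight disjoint paths via Menger (using the $30$-connectivity of $G$, or equivalently the absence of rigid separations in $G'$), and the linkage is completed inside $L$. Your proposal replaces this machinery with a single black-box rooted-minor lemma that the paper never supplies; as written, your argument has no proof behind the crucial step.
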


Our proof of Theorem~\ref{main2} combines ideas from Mader~(\cite{M68}, 1968) and ~\cite{KY13, TW05} in 2005 and 2013.  In Section~\ref{proof-thm2}, we will outline the proofs in detail.  Here we would like to highlight a few things in our proofs.

One of the main ingredients of Mader's proof of Theorem~\ref{thm:7Conn} is the following theorem.

\begin{theorem}[Mader 1968~\cite{M68}]\label{thm:MaderS-3}
Suppose $G$ is a $(k + 1)$-contraction-critical graph.
If $S \subseteq V(G)$ with $|S| \le k$ and $\alpha(G[S]) \ge |S| - 3$, then $G - S$ is connected.
\end{theorem}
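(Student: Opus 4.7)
The strategy is to argue by contradiction: suppose $G - S$ is disconnected, with components $C_1, \ldots, C_t$ (where $t \ge 2$), and try to construct a proper $k$-coloring of $G$, contradicting $\chi(G) = k+1$. Let $I \subseteq S$ be a maximum independent set in $G[S]$ with $|I| \ge |S| - 3$, and set $J := S \setminus I$, so $|J| \le 3$.

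The plan is to build the desired $k$-coloring by first coloring $S$ according to a fixed target (all of $I$ gets color $1$, which is permissible because $I$ is independent, and a proper coloring of $G[J]$ uses at most $|J| \le 3$ additional colors, say from $\{2, 3, 4\}$), and then extending to each component $C_i$. For the extension, I would exploit $(k+1)$-contraction-criticality: for each $i$, set $T_i := C_i \cup N_I(C_i)$, which is connected because $C_i$ is connected and every vertex of $N_I(C_i)$ has a neighbor in $C_i$. Since $t \ge 2$, we have $V(G) \setminus T_i \supseteq \bigcup_{j \ne i} C_j \ne \emptyset$; provided $|T_i| \ge 2$, $G/T_i$ is a proper minor of $G$ and hence $k$-colorable. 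Fix such a coloring $\phi_i$ and, by permuting labels, assume $\phi_i(t_i) = 1$, where $t_i$ is the contracted vertex. Every neighbor of $t_i$ in $G/T_i$ then avoids color $1$, so assigning color $1$ to each vertex of $N_I(C_i)$, together with $\phi_i$ on $V(G) \setminus T_i$, produces a proper $k$-coloring of $G - C_i$ that matches the target on $I$.

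It then remains to extend the coloring to $C_i$ itself, which becomes a list-coloring problem on $G[C_i]$: each $u \in C_i$ has a list $L(u) = \{1, \ldots, k\} \setminus \phi_i(N_S(u))$ of size at least $k - |J| - 1 \ge k - 4$, with color $1$ excluded precisely when $N_I(u) \ne \emptyset$. The hard part will be completing this list coloring on $G[C_i]$: although $G[C_i]$ is a proper subgraph of $G$ and thus $k$-colorable, it need not be $(k-4)$-list-colorable with the particular lists induced by $\phi_i$.

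My proposed route around this obstacle is to exploit the freedom in choosing $\phi_i$ and the smallness of $J$. By performing Kempe-chain swaps in $\phi_i$ that involve the at most three colors in $\phi_i(J)$ (and, if needed, enlarging $T_i$ by absorbing further portions of $V(G) \setminus T_i$ before contracting, so as to constrain $\phi_i$ more tightly), one aims to uniformize the lists on $C_i$ to the common set $\{5, \ldots, k\}$, thereby reducing the list-coloring problem to the ordinary chromatic extendibility of $G[C_i]$, which in turn follows from another application of $(k+1)$-contraction-criticality to a suitable proper minor. The hypothesis $\alpha(G[S]) \ge |S| - 3$ is essential here: $|J| \le 3$ caps the number of distinct forbidden-color patterns that $S$ can impose on $C_i$, keeping the case analysis bounded independently of $k$. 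Degenerate subcases — in particular $|T_i| = 1$, which forces a singleton $C_i = \{u\}$ with $N_S(u) \subseteq J$ and therefore $\deg_G(u) \le 3$, contradicting the minimum-degree bound $\delta(G) \ge k$ for $(k+1)$-contraction-critical graphs — are disposed of directly, and the per-component extensions assemble into a proper $k$-coloring of $G$, the desired contradiction.
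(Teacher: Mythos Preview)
Your framework—assume $G-S$ disconnects, use contraction-criticality to $k$-color each side with the large independent set $I$ monochromatic, and glue along $S$—is Mader's, and it is what the paper follows in Section~3 (for the generalization, Theorem~\ref{main}). But there is a genuine gap at precisely the step you flag as ``hard.'' After contracting $T_i$ and permuting so the contracted vertex gets color $1$, the coloring $\phi_i$ places \emph{some} colors on $J = S\setminus I$, not your pre-chosen target colors; different components will in general disagree on $J$, so the per-component extensions cannot be glued. Your proposed remedy—Kempe swaps to push $\phi_i(J)$ into $\{2,3,4\}$ and then $(k-4)$-color $G[C_i]$ via ``another application of contraction-criticality to a suitable proper minor''—does not work as stated: any single contraction yields only $\chi\le k$ for the minor, which gives no control on $\chi(G[C_i])$ beyond $k-1$, and there is no mechanism here to force a \emph{prescribed} coloring of $J$. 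The vague ``enlarging $T_i$'' does not repair this.

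Mader's actual argument (reproduced in English in Section~3) is asymmetric and does not fix a target on $J$ in advance. One first takes a $U$-monochromatic $k$-coloring $\phi_1$ of one side $G_1$; if it happens to give $W=S\setminus U$ all distinct colors, a permutation on the $G_2$ side finishes. Otherwise $\phi_1$ partitions $W$ into $p<|W|\le 3$ color classes $V_1,\dotsc,V_p$. The key device is to assign each $V_i$ a list $L_i$ so that every subfamily of $\{V_1,\dotsc,V_p\}$ has its own dedicated common color; this power-set assignment costs $2^p-1\le 3$ colors, and the hypothesis $|S|\le k$ is exactly what guarantees those colors are available outside $\phi_1(U)$. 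The point of the lists is a Kempe-connectivity statement: the subgraph of $G_1$ induced by the colors of $L_i$ must have all of $V_i$ in a single component $C_i$, else a two-color swap would increase the number of colors $\phi_1$ uses on $W$, contradicting maximality. One then contracts the $C_i$'s (and the remaining components of $G_1$), $k$-colors the resulting proper minor to get $\phi_2'$ on $G_2$, and finally modifies $\phi_1$ by swapping on each $C_i$ to match $\phi_2'$ on $S$. The power-set list trick and the resulting single-Kempe-component guarantee are the substance of the proof; your sketch invokes Kempe chains but is missing both ingredients.
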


Mader commented that if the condition $|S|\le k$ in Theorem~\ref{thm:MaderS-3} could be strengthened to $|S|\le k+1$, then the result would imply the Four Color Theorem. In this article, we fully generalize Theorem~\ref{thm:MaderS-3}.  Mader's original proof of Theorem~\ref{thm:MaderS-3}~\cite{M68} is written in German, and we believe that our present paper is the first time a similar method appears in the literature in English.

\begin{restatable}{theorem}{firstmader}\label{main}
For integers $k\ge1, t\ge 3, k \ge (s + 2^{t - 1} - t)$, suppose $G$ is a $k$-contraction-critical graph. If $|S| \le s$ and $\alpha(G[S]) \ge |S| - t$, then $G-S$ is connected.
\end{restatable}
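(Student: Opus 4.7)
The plan is to prove Theorem~\ref{main} by induction on $t \ge 3$. The base case $t = 3$ reduces to Mader's Theorem~\ref{thm:MaderS-3}: indeed $2^{t-1} - t = 1$ when $t=3$, so our hypothesis $k \ge s+1$ matches Mader's hypothesis that $|S| \le k-1$ in a $k$-contraction-critical graph. Assume now the theorem holds at parameter $t-1$. Suppose for contradiction that $G$ is a $k$-contraction-critical counterexample at parameter $t$: $k \ge s + 2^{t-1} - t$, $|S| \le s$, $\alpha(G[S]) \ge |S| - t$, and $G - S$ is disconnected with components $C_1,\ldots,C_r$, $r \ge 2$. Fix a maximum independent set $I$ of $G[S]$ of size at least $|S|-t$ and let $W := S \setminus I$, so $|W| \le t$.

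The first step is an inductive reduction using vertices of $W$. For each $w \in W$, consider $S^- := S \setminus \{w\}$. Since $w \notin I$, we still have $I \subseteq S^-$, so
\[
\alpha(G[S^-]) \ge |I| \ge |S| - t = |S^-| - (t-1).
\]
The hypothesis gives $k \ge s + 2^{t-1} - t \ge (s-1) + 2^{t-2} - (t-1)$, which is trivially equivalent to $2^{t-1} \ge 2^{t-2}$; hence we may invoke the inductive hypothesis on $S^-$ with parameter $t-1$ and conclude that $G - S^-$ is connected. Since $G - S^-$ is obtained from $G - S$ by re-inserting $w$ together with all of its edges to $V(G) \setminus S$, this connectivity forces $w$ to have a neighbor in every component $C_i$. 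Repeating this for every $w \in W$, we conclude that \emph{each vertex of $W$ is adjacent to every component of $G - S$.}

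The second stage converts this structural property into a $(k-1)$-coloring of $G$ via a merging argument. Pick any component $C_1$ and let $G_1 := G[V(C_1) \cup S]$ and $G_2 := G - V(C_1)$; both are proper subgraphs of $G$, hence properly $(k-1)$-colorable by contraction-criticality. A proper $(k-1)$-coloring of $G$ arises from any pair of $(k-1)$-colorings of $G_1$ and $G_2$ whose restrictions induce the same partition of $S$ into colour classes (after relabeling colours on one side). The independent set $I$, of size at least $s - t$, supplies most of the freedom: within any proper coloring the vertices of $I$ can be distributed among colour classes almost at will, so the partition of $S$ is essentially dictated by the colouring behaviour on the small set $W$ of size at most $t$. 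Since each $w \in W$ has a neighbour in every component (from the first stage), a colour can always be ``transported'' across the separator at $W$; thus, up to reshuffling within $I$, there are at most $2^{t-1}$ essentially distinct colour-partitions of $S$, one for each binary choice of whether each $w \in W$ joins an existing $I$-class or opens a new one. The slack $k - s \ge 2^{t-1} - t$ supplies enough spare colours to realise every such partition on both sides, and a pigeonhole argument produces a common partition, yielding a proper $(k-1)$-coloring of $G$ and contradicting $\chi(G) = k$.

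The main obstacle is the quantitative merging step in the second stage: one must show rigorously that at most $2^{t-1}$ essentially distinct colour-partitions of $S$ can arise, and that sufficiently many of them are simultaneously realizable on both sides of the separator to force an overlap, consuming exactly the slack $2^{t-1} - t$ in the hypothesis on $k$. This is the technical heart of Mader's 1968 argument for $t = 3$, and the task is to generalise it cleanly to arbitrary $t$ while tracking the doubling cost per vertex of $W$ absorbed, which is precisely the source of the exponential factor in the final bound.
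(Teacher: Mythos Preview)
Your first stage is correct and is a clean structural observation: for each $w\in W$, removing $w$ from $S$ drops the parameter to $t-1$, the inductive hypothesis applies, and connectedness of $G-(S\setminus\{w\})$ forces $w$ to touch every component of $G-S$. The paper, however, never uses this fact, and it is not clear that it helps with what follows.

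The genuine gap is your second stage. What you have written is an outline, not an argument, and you acknowledge as much. The specific mechanism you suggest --- counting ``at most $2^{t-1}$ essentially distinct colour-partitions of $S$'' via binary choices on $W$ and then invoking pigeonhole --- does not work as stated. Partitions of a $t$-set are counted by Bell numbers, not powers of two; there is no evident reason why the realizable partitions on the two sides should overlap; and the slack $2^{t-1}-t$ is not naturally a ``number of partitions'' at all. The exponential term in the bound has a different origin.

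The paper's actual proof is a Kempe-chain recoloring argument, not a counting/pigeonhole argument. One first obtains $U$-monochromatic $(k-1)$-colorings $\phi_1,\phi_2$ of $G_1,G_2$ by contracting the far side and extends so that all of $U$ receives one colour. If $\phi_1$ already uses $|W|$ distinct colours on $W$, one permutes and glues. Otherwise $W$ is partitioned into $p\le t-1$ colour classes $V_1,\dots,V_p$. The key construction is to assign each nonempty subset $J\subseteq\{1,\dots,p\}$ its own dedicated colour, placed on exactly the lists $L_i$ with $i\in J$; this is where the $2^p-1\le 2^{t-1}-1$ colours are consumed, and the hypothesis $k-1-|U|\ge 2^{t-1}-1$ guarantees they are available. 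These lists let one grow, by iterated two-colour Kempe swaps, disjoint connected subgraphs $C_1,\dots,C_p$ in $G_1$ with $V_i\subseteq C_i$. Contracting these (and the leftover components) inside $G_1$ yields a proper minor whose $(k-1)$-coloring $\phi_2'$ can then be pulled back to $G_1$ by swapping on the $C_i$ and $D_j$, producing a coloring that agrees with $\phi_2'$ on $S$ and hence glues to a $(k-1)$-coloring of $G$. None of this is visible in your outline; in particular, the power-set list assignment and the Kempe-chain construction of the $C_i$ are the technical heart, and they are absent.
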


Theorem~\ref{main} immediately gives the following corollary. Note that Dirac~\cite{D60} showed that separating sets in a contraction-critical graph cannot be a clique.

\begin{corollary}\label{cor:EasyConn}
For $t \ge 6$, any $k$-contraction-critical graph is $t$-connected for $k \ge 2^{t - 4} + 2$.
\end{corollary}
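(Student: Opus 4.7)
The plan is to argue by contradiction: suppose $G$ is a $k$-contraction-critical graph with $k\ge 2^{t-4}+2$ which fails to be $t$-connected, so it admits a separating set $S$ with $|S|\le t-1$. I then hope to apply Theorem~\ref{main} to $S$ in order to conclude that $G-S$ is connected, contradicting the choice of $S$. The only real content of the proof is matching the parameters carefully, since Theorem~\ref{main} and the corollary are both stated with a variable named $t$, and these two $t$'s are not the same.

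To disambiguate, let $t$ denote the connectivity target of the corollary, and let me invoke Theorem~\ref{main} with its own parameter renamed to $t'$. I would take $s=|S|\le t-1$ and $t'=t-3$. The three hypotheses to check are: (i) $t'\ge 3$, which holds because the corollary assumes $t\ge 6$; (ii) the numerical bound $k\ge s+2^{t'-1}-t'$, which after substitution becomes $k\ge |S|+2^{t-4}-(t-3)$; since $|S|\le t-1$, the right-hand side is at most $2^{t-4}+2$, which is exactly the lower bound on $k$ in the corollary; (iii) the independence condition $\alpha(G[S])\ge |S|-t' = |S|-(t-3)$. Since $|S|\le t-1$ gives $|S|-(t-3)\le 2$, it suffices to show $\alpha(G[S])\ge 2$, i.e., that $S$ is not a clique. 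This is precisely Dirac's theorem cited in the remark before the corollary: a separating set in a contraction-critical graph cannot be a clique.

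With all three hypotheses verified, Theorem~\ref{main} yields that $G-S$ is connected, contradicting our assumption that $S$ was a separating set, and the corollary follows. There is no genuine obstacle here; the entire argument is a direct reduction to Theorem~\ref{main}, and the only thing to be careful about is the parameter translation ($t'=t-3$) and confirming that the exponential bound $2^{t-4}+2$ in the corollary is precisely what the inequality $k\ge s+2^{t'-1}-t'$ specializes to when $s=t-1$ and $t'=t-3$.
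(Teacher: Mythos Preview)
Your proof is correct and follows essentially the same approach as the paper: both apply Theorem~\ref{main} with the parameter choice $s=t-1$ and the theorem's $t$ equal to $t-3$, then use Dirac's result that a separating set cannot be a clique. The only cosmetic difference is the direction in which the contrapositive is traversed---you first invoke Dirac to verify $\alpha(G[S])\ge 2$ and then conclude $G-S$ is connected, while the paper first deduces $\alpha(G[S])<2$ from the disconnectedness of $G-S$ and then derives the contradiction from Dirac.
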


\begin{proof}
Suppose $G$ is $k$-contraction-critical for some $k \ge 2^{t - 4} + 2 = (t - 1) + 2^{(t - 3) - 1} - (t - 3)$.
Then if $S$ is a separating set of $G$ with $|S| \le t - 1$, it follows from Theorem~\ref{main} that $\alpha(S) < |S| - (t - 3) \le (t - 1) - (t - 3) = 2$.  That is, $\alpha(S) = 1$ and $S$ forms a clique, a contradiction.
\end{proof}

In particular, Corollary~\ref{cor:EasyConn} implies that $h(k)\ge 8$ for $k\ge18$, $h(k)\ge 9$ for $k\ge34$ and $h(k)\ge 10$ for $k\ge66$, but we obtain better bounds with some more effort, still using our generalized Theorem~\ref{main}.

The paper is organized as follows. In Section~\ref{proof-thm2}, we give a proof for Theorem~\ref{main2} and Theorem~\ref{thm:4linked} with deferred proofs of some main lemmas. In Section~\ref{sec:independence}, we give a proof of  Theorem~\ref{main}. In Section~\ref{sec:N[v]}, we prove Theorem~\ref{lemma2.5}. In Section~\ref{finding}, we prove Lemma~\ref{knitted1}, and place the tedious parts in the appendices.  We finish the article with some closing remarks.

\section{Proof of  Theorem~\ref{main2} and Theorem~\ref{thm:4linked} (with deferred proofs)}\label{proof-thm2}

In this section we give an overview of the proof of Theorem~\ref{main2} .

Let $G$ be a $k$-contraction-critical graph with $k \ge k_0$, and suppose for a contradiction that $G$ has a minimum separating set $S$ with $|S| \le m-1$, where $(k_0,m)\in \{(17, 8), (29, 9), (41, 10)\}$.

As $|S|\le m-1$ and $k_0\ge (m-1)+2^{(m-4)-1}-(m-4)$ and $G-S$ is not connected, by Theorem~\ref{main}, $\alpha(G[S])< |S|-(m-4) = 3$. That is, $\alpha(G[S])\le 2$.

Thus we can partition $S$ into subsets $S_1, S_2, \ldots, S_t$ such that each $S_i$ is a maximal independent set with $|S_i| \le 2$.  Then for each $S_i, S_j$ with $i \ne j$, there is an edge with one end in $S_i$ and one end in $S_j$.  Let $G_1, G_2$ be subgraphs of $G$ such that $G_1 \cup G_2 = G$, $G_1 \cap G_2 = G[S]$, and $G_i \ne G[S]$ for $i \in \{1, 2\}$.

Suppose that both $(G_1, S)$ and $(G_2, S)$ are knitted, that is, we can find disjoint connected subgraphs $C_1, \ldots, C_t$ in $G_1$ such that $S_i \subseteq C_i$, and disjoint connected subgraphs $D_1, \ldots, D_t$ in $G_2$ such that $S_i \subseteq D_i$.   A graph $W$ is $(x_1, x_2, \dots, x_t)$-knitted if $(W, S)$ is knitted for every partition $\mathcal{P} = \{S_1, S_2, \dots, S_t\}$ of $S$ with $|S_i| = x_i\in \{1,2\}$ for all $i$. Let $S_i=\{s_i, t_i\}$ when $|S_i|=2$ and $S_i=\{s_i\}$ when $|S_i|=1$.  Note that $(2, \ldots, 2)$-knitted is same as $k$-linked, when there are exactly $k$ twos. Bollob\'as and Thomason~\cite{BT96} were the first to introduce and study knitted graphs.

Now in $G$, by contracting $C_i$ into a single vertex for each $i$, we obtain a graph $G_2'$, and by contracting $D_i$ into a single vertex for each $i$, we obtain a graph $G_1'$.  Since $G$ is $k$-contraction-critical, both $G_1'$ and $G_2'$ are $(k - 1)$-colorable.  Consider colorings of $G_1'$ and $G_2'$ so that they have the same colors on $S_i$.  Such colorings exist since, by our choice of the partition of $S$, the vertices obtained from $S_1, S_2, \dots, S_t$ by contraction induce a clique.  We can then combine and extend these colorings to a $(k - 1)$-coloring of $G$ by expanding the sets $S_1, S_2, \dots, S_t$, a contradiction.

Therefore, we may assume that $(G_1, S)$ is not knitted.

We claim that $G_1$ does not contain  a $(2,2,2,1)$-knitted subgraph when $m=8$, or a $4$-linked subgraph when $m=9$, or a $(2,2,2,2,1)$-knitted subgraph when $m=10$. For otherwise, let $L$ be such a knitted subgraph. Since $G$ is $(m-1)$-connected, we can find $m-1$ disjoint paths from $S$ to $L$, from which we get a $(m-1)$-subset $S' \subseteq V(L)$ and a corresponding partition of $S'$ into $S_1', \ldots, S_t'$.  In $L$, we can find disjoint connected subgraphs $C_1', \ldots, C_t'$ such that $S_i' \subseteq C_i'$.  Then we can find connected subgraphs $C_1, \ldots, C_t$ in $G_1'$ such that $S_i \subseteq C_i$. This contradicts that $(G_1, S)$ is not knitted.

We also claim that $G_1$ does not contain two disjoint $K_6$ subgraphs when $m=8$. Since $m-1=7$, the set $S_t$ is a singleton and belongs to at most one of these subgraphs, so let $L$ be a $K_6$ subgraph of $G_1$ that does not contain $S_t$.  We can similarly find six disjoint paths from $S - S_t$ to $L$ and obtain disjoint connected subgraphs $C_1, \ldots, C_t$ such that $S_i\subseteq C_i$, where $C_t = S_t$.  This contradicts that $(G_1, S)$ is not knitted.

However, we are able to show that such subgraphs do exist in relatively dense subgraphs.

\begin{lemma}\label{knitted1}
Let $z$ be a vertex and $H$ be the graph induced by $N[z]$ such that
\begin{equation}\label{eq000} 
\mbox{$H$ satisfies at least one of the following:}
\end{equation}
\begin{quote}
	\begin{enumerate}[(i)]
	\item $n(H)\le p$ and  $\delta(H)\ge \left\lfloor \frac{p}{2}\right\rfloor+1$. 
	\item $n(H) \le p-2$, $\delta(H) \ge \left\lfloor \frac{p}{2} \right\rfloor$, and $H$ has at most 2 (non-adjacent) vertices of degree $\left\lfloor \frac{p}{2} \right\rfloor$. 
	\item $n(H) \le p-4$ and $\delta(H) \ge \left\lfloor \frac{p}{2} \right\rfloor$.
	\end{enumerate}
  \end{quote}
Then
\begin{enumerate}[(a)]
\item if $p=42$,  then $H$ contains a $(2, 2, 2, 2, 1)$-knitted subgraph.
\item if $p=30$, then $H$ contains a $4$-linked subgraph.
\item if $p=18$,  then $H$ contains a $(2, 2, 2, 1)$-knitted subgraph. 
\end{enumerate}
\end{lemma}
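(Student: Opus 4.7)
The plan is to exploit the density of $H$ together with the key structural fact that $z$ is a universal vertex of $H$, since $H = G[N[z]]$. Each of the hypotheses (i), (ii), (iii) forces $\delta(H) \ge n(H)/2 - O(1)$, so $H$ is nearly as dense as a complete graph, and in particular $H - z$ has minimum degree at least $\lfloor p/2 \rfloor - 1$ on at most $p - 1$ vertices. By Dirac-type results, this yields very high connectivity, Hamilton cycles, and large clique minors inside $H$, which are the main raw material for building knitted or linked substructures.

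In each of the three cases, my approach is first to locate a highly connected or clique-like subgraph $L \subseteq H$ by density arguments, then to verify that $L$ has the required knitted or linked property, using $z$ as a ``free'' router whenever this helps. In case (a) with $p = 42$ and target a $(2,2,2,2,1)$-knitted subgraph, the singleton $S_5 = \{s_5\}$ can have $C_5 = \{s_5\}$ itself, so the actual task reduces to linking the four remaining pairs inside $H - z$, a graph on at most $41$ vertices with minimum degree at least $20$, which is well within the range where standard sufficient conditions for $4$-linkedness in dense graphs apply. Case (c) with $p = 18$ is analogous but one pair smaller. Case (b) with $p = 30$ and target a $4$-linked subgraph is the hardest of the three: all eight terminals are genuine endpoints, the universal vertex $z$ can serve as an internal vertex of at most one of the four paths, and the remaining three pairs must be linked inside $H - z$.

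The main obstacle will be the borderline density hypotheses (ii) and (iii), where $\delta(H) = \lfloor p/2 \rfloor$ rather than $\lfloor p/2 \rfloor + 1$. In these cases, classical Dirac-type sufficient conditions for high connectivity or Hamiltonicity become tight and may fail on pathological configurations such as graphs close to $K_{n/2,n/2}$ or graphs with an ``almost dominating'' clique hanging off an independent set. The compensating hypotheses --- at most two non-adjacent low-degree vertices in (ii), or a slightly smaller $n(H)$ in (iii) --- will need to be used delicately, for instance by routing internal path segments so as to avoid the handful of low-degree vertices, or by fixing those vertices as prescribed terminal endpoints while the many high-degree vertices serve as the router core. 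The author's remark about deferring tedious portions to the appendices confirms that substantial case analysis is unavoidable, but the unifying framework throughout is: (1) find a dense core subgraph inside $H$; (2) use the universal vertex $z$ as a free extra router for one path or for the singleton connected subgraph; and (3) apply Menger-type arguments to assemble the required disjoint connected subgraphs $C_i$.
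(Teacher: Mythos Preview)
Your proposal has a genuine gap at its core: the claim that ``standard sufficient conditions for $4$-linkedness in dense graphs apply'' to a graph on at most $41$ vertices with minimum degree at least $20$ is not supported by any known result. The best general bound is Thomas--Wollan's $g(k)\le 10k$, giving $g(4)\le 40$, i.e.\ one needs $40$-connectedness; and the common-neighbor criterion (every non-adjacent pair shares at least $3k-2=10$ neighbors) is far from satisfied here. Indeed, with $\delta(H)\ge 22$ and $n(H)\le 42$, two non-adjacent vertices $u,v$ satisfy only
\[
|N(u)\cap N(v)| \ge d(u)+d(v)-|N(u)\cup N(v)| \ge 22+22-40 = 4,
\]
and even counting the universal vertex $z$ this gives $5$, well short of the $10$ needed. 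So the density hypothesis alone does not hand you $4$-linkedness, and your reduction ``the actual task reduces to linking four pairs inside $H-z$, well within standard range'' is exactly the step that fails. The same objection applies to cases (b) and (c).

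The paper's proof is not a black-box application of a linkedness criterion at all. It argues by contradiction: assuming $H$ has no $(2,2,2,2,1)$-knitted subgraph, it fixes a specific witnessing set of terminals $u_0,u_1,v_1,\ldots,u_4,v_4$, greedily builds short partial paths $C_0,\ldots,C_4$ (each $C_i$ a $(u_i,v_i)$-path on at most $5$ vertices, or the disconnected pair if none exists), and then studies, for vertices $a,b$ lying in different components of $H-(C\setminus\{u_j,v_j\})$, the quantities
\[
s_i = |N(a)\cap N(b)\cap C_i| - |C_i\setminus(N(a)\cup N(b))|.
\]
Two technical lemmas give tight two-sided bounds on each $s_i$ and on $\sum_i s_i$ in terms of degrees and the sizes $|C_i|$, and the contradiction is extracted by a careful case analysis on $|C_1|,\ldots,|C_4|$ together with a ``reroute'' operation that trades an internal vertex of one $C_i$ for a vertex outside $C$. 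The universal vertex $z$ is used, but not as a router: its main role is that $z$ is complete to every candidate set, so any clique of size $m$ in $H-z$ extends to a clique of size $m+1$, which repeatedly forces $\Delta(\overline{H}[A])\ge 1$ on various auxiliary sets $A$. The borderline hypotheses (ii) and (iii) are handled by the same machinery with slightly different arithmetic; there is no separate ``avoid the low-degree vertices'' routing scheme of the kind you sketch.
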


Therefore, to reach a contradiction, we turn our attention to find dense subgraphs with property \eqref{eq000} in $G_1$ (with $p \ge 42, 30$ and $18$, respectively).  The following classic result by Dirac provides us further information on $G$.

\begin{lemma}[Dirac 1960~\cite{D60}]\label{lem:alphaNu}
If $G$ is $k$-contraction-critical, then $\alpha(G[N(u)]) \le d(u) - k + 2$ for any $u \in V(G)$.
\end{lemma}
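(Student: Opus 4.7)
The plan is a short contradiction argument that exploits the defining property of $k$-contraction-critical graphs: every proper minor is $(k-1)$-colorable. Suppose for contradiction that some vertex $u$ satisfies $\alpha(G[N(u)]) \ge d(u) - k + 3$, and let $I \subseteq N(u)$ be an independent set witnessing this lower bound. Then $|N(u) \setminus I| \le k - 3$. My goal is to produce a proper $(k-1)$-coloring of $G$ from this assumption.

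First I would form a proper minor $G'$ of $G$ by contracting the star on $\{u\} \cup I$ into a single vertex $w$ (concretely, identify $u$ with every vertex of $I$, which is legitimate because $I$ is independent, so the star is a connected subgraph). Since $|I| \ge 1$ (which follows from the standard fact that a $k$-contraction-critical graph is also $k$-color-critical and hence has $\delta(G) \ge k - 1 \ge 1$), the graph $G'$ is a proper minor of $G$, so by $k$-contraction-criticality it admits a proper $(k-1)$-coloring $c$.

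Next, I would lift $c$ to a partial coloring $c'$ of $G$ by assigning color $c(w)$ to every vertex of $\{u\} \cup I$ and color $c(x)$ to every other vertex $x$. Because $I$ is independent and every edge between $I$ and $V(G) \setminus (I \cup \{u\})$ was already verified proper at $w$ in $G'$, the only possible conflicts in $c'$ are the edges from $u$ to $I$; that is, $c'$ is a proper coloring of $G - u$. To finish, it then suffices to pick a color for $u$ not appearing on $N(u)$. The vertices of $I$ all receive the single color $c(w)$, and the at most $k - 3$ vertices of $N(u) \setminus I$ contribute at most $k - 3$ further colors, so at most $(k - 2)$ of the $k - 1$ colors are forbidden; some color is available for $u$, giving a proper $(k-1)$-coloring of $G$ and contradicting $\chi(G) = k$.

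There is no serious obstacle in this argument; the whole point is the arithmetic $1 + (k-3) < k - 1$ that comes from isolating a single independent set $I$ and bundling it with $u$ into one contracted vertex, so that $I$ contributes only one color instead of $|I|$ colors to the palette forbidden at $u$. The only routine verifications are that the star $\{u\}\cup I$ is a connected subgraph (so contracting it yields an honest minor) and that $|I|\ge 1$ so that this minor is proper; both are immediate from $\delta(G)\ge k-1$.
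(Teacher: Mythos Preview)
Your proof is correct and is the standard argument for this classical result of Dirac. Note that the paper does not actually prove Lemma~\ref{lem:alphaNu}; it simply cites it as a known result from~\cite{D60} and then uses it (for instance, to deduce $\delta(G)\ge k-1$). Your argument supplies exactly the proof one would expect: contract the star $\{u\}\cup I$ on a large independent set $I\subseteq N(u)$, $(k-1)$-color the resulting proper minor, and recolor $u$ using the arithmetic $1+(k-3)<k-1$. The only caution is that you invoke $\delta(G)\ge k-1$ to guarantee $|I|\ge 1$, whereas the paper derives that inequality \emph{from} Lemma~\ref{lem:alphaNu}; your justification via $k$-color-criticality is independent and avoids circularity, so there is no issue.
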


As a consequence of Lemma~\ref{lem:alphaNu}, since $G$ is $k$-contraction-critical with $k\ge k_0$, then $\delta(G) \ge k-1$. It follows that $u \in V(G_1) - S$ satisfies $d(u) \ge k-1$.

We claim that for each $u\in G_1-S$, $d(u)\ge k+1$.  For otherwise, suppose that some $u\in V(G_1)-S$ has $d(u)\le k$.  Then $d(u)=k$ or $d(u)=k-1$. For the latter case, by Lemma~\ref{lem:alphaNu}, $\alpha(G[N(u)])\le (k-1)-k+2=1$, that is, it is a clique of order $k-1\ge k_0-1\ge 10$, which is of course a knitted subgraph.   Therefore $d(u)=k$ and $\alpha(G[N(u)])=2$. Let $H=N[u]$. If some vertex $v\in N(u)$ has degree $d(v)\le \lceil \frac{k}{2}\rceil+1$ in $H$, then $(N(u)-N[v])\cup\{u\}$ has independence number $1$ and thus is a clique, whose size is at least $(k+1)-(\lfloor \frac{k}{2}\rfloor+2)+1\ge9$; therefore, we have a $K_9$, which is $(2,2,2,2,1)$-knitted, $4$-linked, and $(2,2,2,1)$-knitted.  So each vertex in $N(u)$ has degree more than $\lceil \frac{k}{2}\rceil+1$. Let $n(H)=k+1$ and $\delta(H)\ge\lceil \frac{k}{2}\rceil+2$. Then we obtain a subgraph $H$ satisfies \eqref{eq000}, a contradiction.

For a given graph $L$, a pair $(A, B)$ is a {\sl separation} if $V(L) = A \cup B$ and there is no edge between $A - B$ and $B - A$.  The {\sl order} of a separation $(A, B)$ is $|A \cap B|$. If $S' \subseteq A$, then we say that $(A,B)$ is a {\sl separation of $(L,S')$}.  A separation $(A, B)$ of $(L,S')$ is {\sl rigid} if $(G[B], A \cap B)$ is knitted. For $T\subseteq V(L)$, let $\rho(T)$ be the number of edges with at least one endpoint in $T$.

\begin{definition}\label{def:massed}
Let $L$ be a graph and $S' \subseteq V(L)$. Then for any integer $p \ge 0$, $(L, S')$ is {\em $p$-massed} if
\begin{enumerate}
\item[(i)] $\rho(V(L) - S') >\frac{p}{2}|V(L) - S'|$, and
\item[(ii)] every separation $(A, B)$ of $(L, S')$ of order at most $|S'| - 1$ satisfies $\rho(B - A) \le \frac{p}{2}|B - A|$.
\end{enumerate}
\end{definition}

We observe that  $(G_1,S)$ is $(k+1)$-massed. In fact, (i) is obvious since each vertex in $V(G_1)-S$ has degree at least $k+1$, and (ii) is also clear since there is no separation of $(G_1,S)$ of order less than $|S|$ in $G$.

\begin{definition}\label{def:minimal}
Let $L$ be a graph and $S' \subseteq V(L)$. For integers $l$ and $p$ with $l\le \lfloor \frac{p}{2} \rfloor-1$, the pair $(L,S')$ is $p$-minimal if
\begin{enumerate}
\item $(L, S')$ is $p$-massed,
\item $|S'| \le l$ and $(L,S')$ is not knitted,
\item subject to (1)-(2), $|V(L)|$ is minimum,
\item subject to (1)-(3), $\rho(V(L) - S')$ is minimum.
\item subject to (1)-(4), the number of edges in $L[S']$ is maximum.
\end{enumerate}
\end{definition}

We will prove the following result in Section~\ref{sec:N[v]}.  This result is essentially a restatement of Theorem 1.4 of Thomas and Wollan~\cite{TW05}, but there is a small gap in their proof (not important to their result though), and we actually can only get a slihgtly weaker one.

\begin{restatable}{theorem}{massedgraphs}\label{lemma2.5}
Let $p\ge 0$ be an integer.
Let $L$ be a graph and $S' \subseteq V(L)$ such that $(L, S')$ is $p$-minimal. Let $\alpha(G[N(S)])\le2$.
Then $L$ has no rigid separation of order at most $|S'|$, and $L$ has a vertex $v \notin S'$ such that the subgraph $H$ induced by $N[v]$ satisfies  at least one of the following:  (i) $n(H)\le p$ and $\delta(H) \ge \left\lfloor \frac{p}{2}\right\rfloor+1$; (ii) $n(H)\le p-2$, $\delta(H) \ge \left\lfloor \frac{p}{2}\right\rfloor$, and $H$ has at most two (non-adjacent) vertices of degree $\left\lfloor \frac{p}{2}\right\rfloor$; (iii) $n(H)\le p-4$ and $\delta(H)\ge \left\lfloor \frac{p}{2}\right\rfloor$.
\end{restatable}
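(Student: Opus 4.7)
The plan is to handle the two claims separately, using the five defining conditions of $p$-minimality as distinct local-optimality levers.

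First I would rule out rigid separations of order at most $|S'|$. Suppose toward contradiction that $(A, B)$ is such a separation with $B \ne A \cap B$. The idea is to replace the $B$-side by a minimal knitted gadget on $A \cap B$, forming $L' = G[A]$ (possibly augmented with a clique on $A \cap B$) with the same $S'$. I would then verify that $(L', S')$ still satisfies conditions (1)--(2) of $p$-minimality: not-knittedness transfers from $(L, S')$ because any witness that $(L', S')$ is knitted extends through the rigid $(G[B], A \cap B)$ to a witness that $(L, S')$ is knitted, contradicting condition (2) for $L$; and $p$-massedness transfers because Definition~\ref{def:massed}(ii) applied on $L$ bounds $\rho(B - A) \le \tfrac{p}{2}|B - A|$, so deleting the $B$-side preserves the strict inequality required by Definition~\ref{def:massed}(i), and every small separation of $L'$ is inherited from $L$. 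Since $|V(L')| < |V(L)|$, this contradicts condition (3).

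Next, to find the special vertex $v$, I would pick $v \in V(L) - S'$ of minimum degree and then exploit three local moves to pin down the structure of $H = L[N[v]]$: (a) edge deletion at an edge $e$ incident to $V(L) - S'$, which by condition (4) must either make $(L-e, S')$ knitted or destroy $p$-massedness, each yielding a small ``choke'' separation through $e$; (b) edge contraction of $uv$ with $u \in V(L) - S'$, which by condition (3) must similarly destroy one of the two properties, forcing large common neighborhoods around $v$; (c) addition of a non-edge in $L[S']$, which by condition (5) must destroy one of the two properties, constraining the structure of $S'$ itself. The first rigid-separation claim keeps the small separations produced in (a)--(b) from living outside $N[v]$. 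Applying (a)--(c) carefully forces $d_L(v) < p$ (so $n(H) \le p$) and forces all but a few vertices of $H$ to have $H$-degree at least $\lfloor p/2 \rfloor + 1$.

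Finally I would carry out a case analysis on the \emph{defective} vertices of $H$ --- those with $H$-degree at most $\lfloor p/2 \rfloor$. The constraints from (a)--(c) show that each defective vertex $u$ either forces the absence of an extra neighbor of $v$ (shrinking $n(H)$) or is non-adjacent to every other defective vertex. The hypothesis $\alpha(G[N(S)]) \le 2$ is invoked here to cap the independence number of the defective set at two, so that the presence of a third defective vertex would force $n(H) \le p - 4$. This dichotomy naturally produces the three cases (i), (ii), (iii) of the conclusion. I expect the main obstacle to be the bookkeeping in this last step: matching each defective vertex to a missing neighbor so the count $p - n(H)$ is paid for exactly, with no double-counting from the contraction step (b). This matching is precisely where the small gap in the Thomas--Wollan argument arises, and we sidestep it by allowing the slightly weaker bound $n(H) \le p - 4$ in case (iii) rather than the tighter bound claimed in \cite{TW05}.
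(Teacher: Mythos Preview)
Your outline has the right toolkit (minimality under contraction, deletion, and edge-addition in $S'$), but both halves of the argument have genuine gaps.

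\textbf{Rigid separations.} The claim that ``every small separation of $L'$ is inherited from $L$'' is where this breaks. After you add the clique on $A\cap B$ and pass to $L'=G^*[A]$, a separation $(A',B')$ of $(L',S')$ with $A\cap B\subseteq B'$ is \emph{not} inherited from $(L,S')$; it is a new object. The paper handles this by first choosing the rigid $(A,B)$ with $A$ \emph{minimum}, then showing that $(G^*[B'],A'\cap B')$ is itself massed, hence knitted by minimality of $(L,S')$, which produces a rigid separation $(A',B\cup B')$ of $(L,S')$ with $|A'|<|A|$---contradicting the minimal choice of $A$. Without the ``$A$ minimum'' setup you have no lever here. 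You also do not treat the boundary case $|A\cap B|=|S'|$, where the paper uses Menger's theorem to route $|S'|$ disjoint paths from $S'$ to $A\cap B$ and again exploit minimality of $A$.

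\textbf{Finding $v$.} Picking a single $v$ of minimum degree and analysing $H=L[N[v]]$ locally cannot by itself deliver the trichotomy (i)--(iii). The paper's argument is global: one assumes that \emph{no} $v\in V(L)-S'$ satisfies any of (i)--(iii), deduces that every low-degree vertex $v$ has $d(v)\ge p-2$ (if $H_v$ has at most two defective vertices) or $d(v)\ge p-4$ (otherwise), and then contradicts the tight edge bound $\rho(V(L)-S')\le \tfrac{p}{2}|V(L)-S'|+1$ coming from edge-deletion. The final step is not ``matching each defective vertex to a missing neighbour of $v$''; it is a Hall-type argument matching each pair $\{s_i,t_i\}\subseteq S'$ to a distinct common neighbour in the set $T$ of low-degree vertices, which would exhibit a knit of $(L,S')$ and contradict condition (2).

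\textbf{The role of $\alpha\le 2$.} This hypothesis is not used to bound the independence number of the defective set inside $H$. It is used upstream: it lets one partition $S'$ into pairs $\{s_i,t_i\}$ (using condition (5) to make $L[S']$ nearly complete), and then the contraction move shows that the only way a neighbour $u$ of $v$ can have $H$-degree exactly $\lfloor p/2\rfloor$ is if $u\in\{s_i,t_i\}$ for some $i$ with $v$ adjacent to both $s_i$ and $t_i$. The defective vertices thus come in non-adjacent pairs automatically, which is what drives the $T_1$/$T_2$ split and the counting.
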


Among all $(k+1)$-massed pairs $(G_1,S)$, we consider a minimal pair $(G_1',S)$ with  $(l, p) = (m-1, k+1)$.  By Theorem~\ref{lemma2.5}, $G_1'$ has no rigid separation of order at most $m-1$, and we can find a subgraph $H$ induced by $N[v]$ for some $v \in V(G_1')-S$ satisfies \eqref{eq000} (with $\ell=(k+1)-2\ge k_0-1$).  By Lemma~\ref{knitted1}, $H$ contains knitted subgraph $H_0$. Since $(G_1',S)$ has no rigid separation of order at most $|S|$, we can find $|S|$ disjoint paths from $S$ to $H_0$, thus $(G_1',S)$ is knitted, a contradiction. This completes the proof of Theorem~\ref{main2}.

\

We are now able to provide a quick proof of Theorem~\ref{thm:4linked}.
\medskip

{\em Proof of Theorem~\ref{thm:4linked}.}
Let $G$ be $30$-connected, and let $S \subseteq V(G)$ with $|S| = 8$ be arbitrary.
Then $\delta(G) \ge 30$, so $(G, S)$ is $30$-massed since $G$ has no separation of order at most $|S| - 1$.
It follows that $G$ has a subgraph $G'$ such that $S \subseteq V(G')$ and $(G', S)$ is minimal.  By Lemma~\ref{lemma2.5} and  Lemma~\ref{knitted1}, $H$ contains a $4$-linked subgraph, say $L$. 
Then $L$ is a subgraph of $G$, so there exist $8$ disjoint paths with one end in $S$, the other end in $L$, and no internal vertex in $L$.  As $L$ is $4$-linked, it follows that we can link the vertices of $S$ as desired.  Therefore, $G$ is $4$-linked.
\hfill$\square$

\section{Proof of Theorem~\ref{main}}\label{sec:independence}

In this section, we prove Theorem~\ref{main}.

\firstmader*

For shortness, Let $U \subseteq V(G)$.  A coloring $\phi$ of $G$ is {\sl $U$-monochromatic} if $\phi$ assigns the same color to every vertex of $U$.  If $\phi'$ is a coloring of the graph obtained from $G$ by contracting $U$ to a single vertex, then, when we say {\sl $\phi'$ can be extended to a coloring $\phi$ of $G$ by expanding the set $U$,} we mean that $\phi(v) = \phi'(v)$ for all $v \in V(G) - U$, and $\phi$ assigns to every vertex of $U$ the same color that $\phi'$ assigns to the contracted vertex.  Note that the coloring $\phi$ is a proper coloring of $G$ if $U$ is an independent set.

Suppose Theorem~\ref{main} is not true.
Let $t$ be maximal such that the result holds for $t - 1$ but does not hold for $t$.
Then by Theorem~\ref{thm:MaderS-3}, we have $t \ge 4$.
Suppose, for some $k \ge (s + 2^{t - 1} - t)$, $G$ is a $k$-contraction-critical graph with a separating set $S$ such that $|S| \le s$ and $\alpha(S) \ge |S| - t$.
By the choice of $t$, we may assume $\alpha(S) = |S| - t$.
Let $U \subseteq S$ be an independent set of order $|S|-t$ and let $W = S - U$.
Let $G_1$ and $G_2$ be subgraphs of $G$ such that $G_1 \cup G_2 = G$ and $G_1 \cap G_2 = G[S]$.
Let $r = k - 1 \geq s + 2^{t - 1} - t - 1$.
Let $\phi'$ be an $r$-coloring of the graph obtained from $G$ by contracting $G_2 - W$ to a single vertex.
Then $\phi'$ may be extended to a $U$-monochromatic $r$-coloring $\phi_1$ of $G_1$ by expanding the set $U$.
Since $U$ is a maximum independent set in $S$, the colors assigned by $\phi_1$ to the vertices of $W$ are distinct from the color assigned to the vertices of $U$.
Similarly, there exists a $U$-monochromatic $r$-coloring $\phi''$ of $G_2$.
Without loss of generality, we may assume that the number of colors used by $\phi_1$ on $W$ is at most as many colors used by $\phi''$ on $W$.
If $\phi_1$ assigns a distinct color to each vertex of $W$, then it is possible to permute the colors of $\phi''$ so that $\phi_1$ and $\phi''$ agree on $S$.
Then we may combine the colorings $\phi_1$ and $\phi''$ to obtain an $r$-coloring of $G$, a contradiction.

Therefore, we may assume that $\phi_1$ assigns the colors $\{1, 2, \dots, p\}$ to the vertices of $W$, where $p < |W|$, and no other $U$-monochromatic $r$-coloring of $G_1$ assigns more colors to $W$.
We will also assume that every vertex of $U$ is assigned the color $r$.
For $i \in \{1, 2, \dots, p\}$, let $V_i$ be the vertex set of $W$ assigned color $i$ by $\phi_1$.
We may assume $|V_1| \ge 2$.

To each set $V_i$ we now assign a list of colors $L_i$ satisfying the properties that $i \in L_i$, $r \notin L_i$, $i \notin L_j$ for all $i \ne j$, and given any subset $J \subseteq \{1, 2, \dots, p\}$ there exists a common color on each list $L_i$ with $i \in J$ that does not appear on any list $L_i$ with $i \notin J$.
In other words, we assign a unique color to each element of the power set of $\{V_1, V_2, \dots, V_p\}$ (except the empty set), and this color is added to the corresponding lists $L_i$ of all sets $V_i$ in that element of the power set.

Note that $|W| = |S| - |U|=t$ and $|V_1|\ge 2$, so $p \le t - 1$.
Thus $${p\choose 1}+{p\choose 2}+\ldots{p\choose p}=2^p - 1 \le 2^{t - 1} - 1$$ distinct colors have been assigned across all of the lists $L_i$.
If there exists $i \ge 2$ such that $|V_i| \ge 2$, then we assign an additional unique color to each list $L_i$ such that $|V_i| \ge 2$ for $i \in \{1, 2, \dots, p\}$.
If we add $q$ additional colors in this way, we must have $p \le t - q$, so we assign at most $2^p - 1 + q \le 2^{t - q} - 1 + q$ colors on all lists.
Since $q \ge 2$ and $t \ge 4$, we have $2^{t - q} - 1 + q \le 2^{t - 1} - 1$, so in any case at most $2^{t - 1} - 1$ colors are used on the lists $L_i$.

Consider the subgraph of $G_1$ induced by all vertices assigned colors of $L_1$ by $\phi_1$.
Then there must be a single component $C_1$ of this subgraph which contains all vertices of $V_1$.
Otherwise, we would be able to swap color $1$ with any other color of $L_1$ on a component which contains a vertex of $V_1$ in order to obtain a $U$-monochromatic $r$-coloring of $G_1$ with $p + 1$ colors on $W$, a contradiction.
Now let $i \in \{1, 2, \dots, p - 1\}$ be maximal such that the component $C_i$ has been chosen.
Consider the subgraph of $G_1 - (\cup_{j = 1}^i C_j)$ induced by all vertices assigned colors of $L_{i + 1}$ by $\phi_1$.
Again, there must be a single component $C_{i + 1}$ of this subgraph which contains all vertices of $V_{i + 1}$.
If $|V_{i + 1}| = 1$, this is obvious since the color $i + 1$ is unique to $L_{i + 1}$.
If $|V_{i + 1}| \ge 2$, then this follows by the same color swap argument as above when swapping the two colors unique to $L_{i + 1}$.
Thus we have recursively defined disjoint, connected subgraphs $C_1, C_2, \dots, C_p$ of $G_1$ such that $V_i \subseteq C_i$ for all $i$.

Now let $D_1, D_2, \dots, D_m$ be the components of $G_1 - (\cup_{i = 1}^p C_i)$.  
Let $\phi_2$ be an $r$-coloring of the graph obtained from $G$ by contracting $C_1, C_2, \dots, C_p, D_1, D_2, \dots, D_m$ each to a single vertex, and let $\phi_2'$ be the $r$-coloring of $G_2$ obtained from $\phi_2$ by expanding the sets $C_1 \cap S, C_2 \cap S, \dots, C_p \cap S, D_1 \cap S, D_2 \cap S, \dots, D_m \cap S$.
Note that for any $i$, all vertices of $V_i$ are assigned the same color by $\phi_2'$.
Let $W_1, W_2, \dots, W_{p'}$ be a minimal partition of $\{V_1, V_2, \dots, V_p\}$ such that for each $i$, all vertices of $W_i \cap (\cup_{j = 1}^p V_j)$ are assigned the same color by $\phi_2'$.  
For each $i$, the lists $L_j$ corresponding to the sets $V_j \in W_i$ have a common color which does not appear on any list $L_j$ corresponding to $V_j \notin W_i$.
We may assume that all vertices of the sets $V_j \in W_i$ are assigned this common color by $\phi_2'$.
If there are two common colors, then $W_i = \{V_j\}$ for some $V_j$ with $|V_j| \ge 2$, and in this case we assume the vertices of $V_j$ are assigned color $j$ by $\phi_2'$.
Since there are at least $r - |U| = r - (|S| - t) \ge 2^{t - 1} - 1$ colors not used by $\phi_2'$ on the vertices of $U$, we may assume that any color in $\{1, 2, \dots, 2^{t - 1} - 1\}$ which is not used by $\phi_2'$ on the vertices of $W$ is also not used on the vertices of $U$.
We now obtain an $r$-coloring $\phi_1'$ of $G_1$ from $\phi_1$ by performing the following color swaps.

\emph{(i) For $i \in \{1, 2, \dots, p\}$, if the vertices of $V_i$ are assigned the color $\lambda$ by $\phi_2'$, then we swap the colors $\lambda$ and $i$ on $C_i$.}

\emph{(ii) For $i \in \{1, 2, \dots, m\}$, if the vertices of $D_i \cap S$ are assigned the color $\lambda$ by $\phi_2'$, then we swap the colors $\lambda$ and $r$ on $D_i$.}

If $C_i$ is assigned the color $\lambda$, then $C_i$ is not adjacent to any other component $C_j$ also assigned the color $\lambda$.
By the choice of the colors $\lambda$ and $i$, and the construction of the component $C_i$, no neighbor of $C_i$ is assigned color $i$ or $\lambda$ by $\phi_1$.
Thus swapping the colors $\lambda$ and $i$ on $C_i$ still gives a proper $r$-coloring of $G_1$.
Similarly, if $D_i$ is assigned the color $\lambda$, then by construction of the components $C_j$, $D_i$ is not adjacent to any vertex of color $\lambda$ or $r$.
If $\lambda \in \{1, 2, \dots, p\}$, this follows from the fact that some component $C_j$ must also have been assigned the color $\lambda$.
Thus swapping the colors $\lambda$ and $r$ on $D_i$ also gives a proper $r$-coloring of $G_1$.
Therefore, $\phi_1'$ is a proper $r$-coloring of $G_1$ which now agrees with $\phi_2'$ on $S$.
These colorings can be combined to give a proper $r$-coloring of $G$, a contradiction.

\section{Finding a Dense Neighborhood: a proof of Theorem~\ref{lemma2.5}}\label{sec:N[v]}

We will prove Theorem~\ref{lemma2.5} in a sequence of claims.

\massedgraphs*

\begin{claim}\label{claim1}
$(G,S)$ has no rigid separation of order at most $|S|$.
\end{claim}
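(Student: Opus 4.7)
The plan is proof by contradiction. Assume $(L, S)$ admits a rigid separation $(A, B)$ of order at most $|S|$; since the case $B \subseteq A$ is trivial, we may assume $B - A \neq \emptyset$. I would construct a strictly smaller pair $(L^*, S)$ that still satisfies conditions (1)--(2) of $p$-minimality, thereby contradicting the minimality of $|V(L)|$ in condition (3). Define $L^*$ to be $L[A]$ with all missing edges among $A \cap B$ added, so that $A \cap B$ becomes a clique in $L^*$. Then $|V(L^*)| = |A| < |V(L)|$, and the argument splits into two verifications.

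First, I would verify that $(L^*, S)$ is not knitted. Given any partition $\mathcal{P}$ of $S$ and hypothetical disjoint connected covers $T_1, \ldots, T_q$ in $L^*$, set $X_i = T_i \cap (A \cap B)$ and extend $\{X_1, \ldots, X_q\}$ to a partition of $A \cap B$ by appending a singleton for each leftover vertex. By the rigid-separation hypothesis, $(L[B], A \cap B)$ is knitted, so this partition is realized in $L[B]$ by disjoint connected $Y_1, \ldots, Y_q$ (plus singletons) with $X_i \subseteq Y_i$. Each added clique edge inside $T_i$ has both endpoints in $X_i \subseteq Y_i$, hence can be rerouted within $Y_i$; the unions $T_i \cup Y_i$ are therefore pairwise disjoint connected subgraphs of $L$ realizing $\mathcal{P}$, contradicting that $(L, S)$ is not knitted. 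Second, I would verify that $(L^*, S)$ is $p$-massed. For condition (i), the added clique on $A \cap B$ more than compensates for the edges of $L[B]$ removed when passing to $L[A]$, using $\rho_L(B - A) \leq \tfrac{p}{2}|B - A|$ from condition (ii) of $(L, S)$ when the rigid separation has order strictly less than $|S|$. For condition (ii), any separation $(A', B')$ of $L^*$ of order at most $|S| - 1$ must place all of $A \cap B$ outside the separator on a single side, since $A \cap B$ is a clique in $L^*$; extending to $(A' \cup (B - A), B')$ or $(A', B' \cup (B - A))$ produces a separation of $L$ of the same order, and condition (ii) for $L$ transfers to the required bound on $\rho_{L^*}(B' - A')$.

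The main obstacle is twofold. First, when the rigid separation has order exactly $|S|$, condition (ii) of $(L, S)$ does not apply to $(A, B)$, so the basic sparsity bound $\rho_L(B - A) \leq \tfrac{p}{2}|B - A|$ is not immediately available; here I would try to reduce to a rigid separation of strictly smaller order, or invoke conditions (4)--(5) of $p$-minimality to select a different extremal pair. Second, in the subcase $A \cap B \subseteq B'$ of condition (ii), the added clique edges do touch $B' - A'$ and contribute to $\rho_{L^*}(B' - A')$, so careful bookkeeping against $\rho_L(B - A) - e(L[B' - A', B - A])$ via the separation $(A', B' \cup (B - A))$ of $L$ is needed to obtain the desired bound on $\rho_{L^*}(B' - A')$.
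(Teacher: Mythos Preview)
Your setup matches the paper's: form $L^* = L[A]$ with a clique added on $A\cap B$, and show that a knit in $L^*$ transfers to a knit in $L$ via the rigidity of $(A,B)$. The verification that $(L^*,S)$ satisfies condition (i) of being $p$-massed also goes through as you describe.

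However, your plan to verify condition (ii) for $(L^*,S)$ has a genuine gap in the subcase $A\cap B\subseteq B'$, and ``careful bookkeeping'' cannot close it. Carrying out your computation via the separation $(A',\,B'\cup(B-A))$ of $L$ gives
\[
\rho_{L[A]}(B'-A') + \rho_L(B-A) \;\le\; \tfrac{p}{2}\bigl(|B'-A'|+|B-A|\bigr),
\]
so one would need the number $c$ of new clique edges meeting $B'-A'$ to satisfy $c\le \rho_L(B-A)-\tfrac{p}{2}|B-A|$. But when $|A\cap B|\le|S|-1$, condition (ii) for $(L,S)$ forces the right side to be $\le 0$, while $c$ can be strictly positive. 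In other words, $(L^*,S)$ may genuinely fail (ii); you cannot hope to prove it is $p$-massed.

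The paper's proof avoids this by a different global structure: it chooses the rigid separation $(A,B)$ with $A$ \emph{minimum}. It then argues that $(L^*,S)$ is \emph{not} massed (otherwise it is knitted by minimality of $|V(L)|$, yielding a knit of $(L,S)$). Since (i) holds, (ii) must fail; take $(A',B')$ violating (ii) with $B'$ minimal. In the subcase $A\cap B\subseteq B'$, the pair $(L^*[B'],A'\cap B')$ is itself $p$-massed and hence knitted, which makes $(A',\,B'\cup B)$ a rigid separation of $(L,S)$ with $A'\subsetneq A$, contradicting the minimality of $A$. The case $|A\cap B|=|S|$ is handled separately via Menger's theorem: either $|S|$ disjoint paths from $S$ to $A\cap B$ exist (and the knit is immediate), or a smaller-order separation exists, again yielding a rigid separation with smaller $A$. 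Your proposal gestures at ``reduce to a rigid separation of strictly smaller order'' only for the order-$|S|$ case, but in fact this reduction, anchored by the choice of minimum $A$, is the engine of the whole argument.
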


The proof is the same as that in \cite{TW05}. We include it here for completeness.

\begin{proof}
For otherwise, take a rigid separation $(A,B)$ of $(G,S)$ with $A$ minimum.

We first assume that $|A \cap B| \le l - 1$.
Let $G^*$ be the graph obtained from $G$ by adding all missing edges in $A \cap B$.
Consider $(G^*[A], S)$.
If $(G^*[A], S)$ is also massed, then $(G^*[A], S)$ is knitted by the minimality of $(G, S)$, and a knit in $G^*[A]$ can be easily converted into a knit in $(G, S)$ as follows.
Since $A \cap B$ is complete in $G^*[A]$, we may assume that each connected subgraph in the knit uses at most one edge with both ends in $A \cap B$, and edges of $E(G^*[A]) - E(G)$ may be replaced by a connected subgraph in $G[B]$ because $(A, B)$ is rigid.
Since $(G, S)$ is not knitted, we conclude that $(G^*[A], S)$ is not massed.
Since $(G, S)$ is massed, $\rho(V(G) - S) \ge \frac{p}{2}|V(G) - S|$ and $\rho(B - A) < \frac{p}{2}|B - A|$, hence $\rho(V(G^*[A]) - S) \ge \rho(V(G) - S) - \rho(B - A) > \frac{p}{2}|V(G) - S| - \frac{p}{2}|B - A| = \frac{p}{2}|V(G^*[A]) - S|$.
So $(G^*[A], S)$ satisfies (i), and thus does not satisfy (ii) in Definition~\ref{def:massed}.
Let $(A', B')$ be a separation of $(G^*[A], S)$ violating (ii) such that $S \subseteq A'$ and $B'$ is minimal.
Since $A \cap B$ forms a clique in $G^*[A]$, either $A \cap B \subseteq A'$ or $A \cap B \subseteq B'$.
If $A \cap B \subseteq A'$, then $(A' \cup B, B')$ is a separation in $G$ violating (ii), contradicting that $(G, S)$ is massed.
So $A \cap B \subseteq B'$.
Consider $(G^*[B'], A' \cap B')$.
The minimality of $B'$ implies that $(G^*[B'], A' \cap B')$ satisfies (ii), and $\rho(B' - A') \ge \frac{p}{2} |B' - A'|$ means that it satisfies (i) as well.
Thus $(G^*[B'], A' \cap B')$ is knitted by the minimality of $(G, S)$.
Then $(G^*[B \cup B'], A' \cap B')$ is knitted, which means that $A' \cap B'$ is a rigid separation of $(G, S)$, a contradiction to the minimality of $A$.

Now assume that $|A \cap B| = l$.
If there exist seven disjoint paths from $S$ to $A \cap B$, then the paths together with the rigidity of $(A, B)$ show that $(G, S)$ is knitted, a contradiction.
Thus there is a separation $(A'', B'')$ of $(G[A], S)$ of order at most $6$ with $A \cap B \subseteq B''$.
Choose such a separation with $|A'' \cap B''|$ minimum.
Then there are $|A'' \cap B''|$ disjoint paths from $A'' \cap B''$ to $A \cap B$, from the rigidity of $(A, B)$ we have $(A'', B \cup B'')$ is a rigid separation of $(G, S)$ with $|A''| < |A|$, a contradiction to the minimality of $A$.
\end{proof}

Note that $\alpha(G[N(S)])\le2$. So $S$ can be partitioned into $S_1, \ldots, S_t$ so that $S_i=\{s_i, t_i\}$ (when $|S_i|=1$ then $s_i=t_i$). Since $(G,S)$ is not knitted, condition (5) in Definition 2.4 implies that for some choice of the partition $S_1, \ldots, S_t$ of $S$, all pairs of vertices of $S$ are adjacent, except possibly the pairs $s_i, t_i$. Thus we may assume that the chosen partition of $S$ has this property.

\begin{claim}\label{claim2}
 Let $u,v$ be adjacent vertices of $G$ and at least one of them does not belong to $S$. Then $u$ and $v$ have at least $\lfloor \frac{p}{2}\rfloor-\epsilon$ common neighbors, where $\epsilon\in \{0,1\}$ with $\epsilon=1$ when one of $u$ and $v$ is in $\{s_i, t_i\}$ for some $i$, and the other is adjacent to both $s_i$ and $t_i$. Consequently, in $G[N[v]]$ for $v\not\in S$, all vertices not in $S$ has degree at least $\lfloor \frac{p}{2}\rfloor+1$, and each vertex in $S$ has degree at least $\lfloor \frac{p}{2}\rfloor$.
\end{claim}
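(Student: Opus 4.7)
The plan is to contract the edge $uv$ and derive a contradiction from the minimality of $(G,S)$. Setting $c = |N(u) \cap N(v)|$, I will suppose for contradiction that $c \le \lfloor p/2 \rfloor - \epsilon - 1$, form $G'$ by contracting $uv$ to a single vertex $w$, and let $S' = S$ if both $u,v \notin S$, or $S' = (S \setminus \{u\}) \cup \{w\}$ if $u \in S$ (the case $v \in S$ being symmetric). Since $|V(G')| = |V(G)| - 1$ and $|S'| = |S| \le l$, it will suffice to verify that $(G',S')$ is $p$-massed: by Definition~\ref{def:minimal}(3) this forces $(G',S')$ to be knitted, and lifting the knit via the edge $uv$ will contradict Definition~\ref{def:minimal}(2) for $(G,S)$.

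For condition (i) of Definition~\ref{def:massed}, a routine edge count gives $\rho'(V(G') - S') = \rho(V(G) - S) - 1 - c - d$, where $d$ counts the vertices of $S \setminus \{u\}$ adjacent to $v$ but not to $u$ (and $d = 0$ when $u \notin S$). By the chosen partition $S_1, \ldots, S_t$, the only potential non-neighbor of $u \in \{s_i, t_i\}$ inside $S \setminus \{u\}$ is the partner of $u$ in its block, so $d \le 1$; moreover $d = 1$ forces $v$ to be adjacent to both $s_i$ and $t_i$, which is exactly the trigger for $\epsilon = 1$. Hence $d \le \epsilon$, so $c + d \le \lfloor p/2 \rfloor - 1 \le p/2 - 1$; combining with $\rho(V - S) > (p/2)|V - S|$ yields $\rho'(V(G') - S') > (p/2)|V(G') - S'|$.

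For condition (ii), I will take an arbitrary separation $(A',B')$ of $(G',S')$ of order at most $|S'| - 1$. If $w \notin A' \cap B'$, I lift to a separation $(A,B)$ of $(G,S)$ of the same order, with $B \setminus A = B' \setminus A'$ and $\rho(B \setminus A) = \rho'(B' \setminus A')$, so condition (ii) for $(G,S)$ closes the case. If $w \in A' \cap B'$, placing both $u$ and $v$ in $A \cap B$ lifts to a separation of $(G,S)$ of order at most $|S|$, and the argument used in the proof of Claim~\ref{claim1} (recursing on $(G[B], A \cap B)$ and invoking the absence of rigid separations of order at most $|S|$) rules out any violation. Once $(G',S')$ is shown $p$-massed, minimality yields a knit of $(G',S')$, and un-contracting $w$ back to $\{u,v\}$ inside the subgraph $C_j'$ containing $w$ gives disjoint connected subgraphs of $G$ realizing a knit for an appropriate repartition of $S$ satisfying Definition~\ref{def:minimal}(5), contradicting the hypothesis that $(G,S)$ is not knitted.

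For the consequence, I will fix $v \notin S$ and work inside $H = G[N[v]]$. For $u \in N(v) \setminus S$, both $u,v$ lie outside $S$, so $\epsilon = 0$ and the first part gives $d_H(u) = |N(u) \cap N(v)| + 1 \ge \lfloor p/2 \rfloor + 1$; for $u \in N(v) \cap S$, $\epsilon \le 1$ yields $d_H(u) \ge \lfloor p/2 \rfloor$; and for $v$ itself, picking any $u \in N(v) \setminus S$ gives $d_H(v) = d_G(v) \ge |N(u) \cap N(v)| + 1 \ge \lfloor p/2 \rfloor + 1$, with the degenerate case $N(v) \subseteq S$ excluded by a standalone deletion argument (removing $v$ preserves $p$-massedness since $d(v) \le |S| \le \lfloor p/2 \rfloor - 1 < p/2$ and all of $v$'s neighbors lie in $S \subseteq A'$, contradicting minimality of $|V(G)|$). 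The main obstacle will be the condition (ii) step when $w$ lies in the separator, because the lifted separation of $(G,S)$ may have order exactly $|S|$ rather than $|S| - 1$, forcing a careful recursion using Claim~\ref{claim1} in place of a direct appeal to condition (ii) for $(G,S)$.
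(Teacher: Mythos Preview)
Your overall strategy—contract $uv$, compare against the $p$-minimality of $(G,S)$, and lift a knit back—matches the paper, and your edge count for condition~(i) (including the careful identification $d\le\epsilon$) is correct. The problem lies in your treatment of condition~(ii).

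When $w\in B'\setminus A'$ (which can occur whenever $u,v\notin S$), the lift you describe is impossible: the contracted vertex must be replaced by both $u$ and $v$, so necessarily $|B\setminus A|=|B'\setminus A'|+1$ and $\rho_G(B\setminus A)=\rho_{G'}(B'\setminus A')+1+c$. Applying condition~(ii) for $(G,S)$ then only yields $\rho_{G'}(B'\setminus A')\le\tfrac{p}{2}|B'\setminus A'|+(\tfrac{p}{2}-1-c)$, which is useless precisely under your hypothesis that $c$ is small. Your hand-wave for the case $w\in A'\cap B'$ has the same defect one level down: to invoke Claim~\ref{claim1} you need $(G[B],A\cap B)$ to be knitted, hence massed, hence to satisfy \emph{its own} condition~(ii); this does not follow from condition~(ii) for $(G,S)$ because $|A\cap B|$ may equal $|S|$.

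The paper closes both gaps simultaneously by arguing contrapositively on~(ii): assume $(G',S)$ violates~(ii), pick a violating separation $(A',B')$ with $B'$ \emph{minimal}, and observe that this minimality forces $(G'[B'],A'\cap B')$ itself to satisfy~(ii) and hence to be $p$-massed. The $p$-minimality of $(G,S)$ then makes $(G'[B'],A'\cap B')$ knitted, so $(A',B')$ is rigid in $G'$; its lift to $G$ (of order at most $|S|-1$ if $\{u,v\}\not\subseteq A\cap B$, and at most $|S|$ otherwise, with rigidity re-verified via the same minimality) contradicts Claim~\ref{claim1}. Without this ``minimal $B'$'' device there is no direct passage from~(ii) for $(G,S)$ to~(ii) for $(G',S')$, and your verification cannot be completed as written.
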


\begin{proof}
Consider the graph $G' = G/uv$, the graph obtained from $G$ by contradicting the edge $uv$.
If $(G', S)$ is knitted, then $(G, S)$ is knitted.
Thus $(G', S)$ is not massed by the minimality of $(G, S)$, and so it violates either (i) or (ii) in Definition~\ref{def:massed}.

Assume first that $(G', S)$ violates (ii).
Let $(A', B')$ be a separation of $G'$ violating (ii) with $B'$ minimal.
Then $\rho(G'[B'] - A') \ge \frac{p}{2}|B' - A'|$, and in particular $(G'[B'], A' \cap B')$ is massed by the choice of $B'$.
By the minimality of $(G, S)$, the pair $(G'[B'], A' \cap B')$ is knitted.
So $(A', B')$ is a rigid separation of $(G', S)$ of order at most $l - 1$.
Note that the separation $(A', B')$ induces a separation $(A, B)$ in $G$, where we replace the contracted vertex of $G'$ with both $u$ and $v$.
If $\{u, v\} \not\subseteq A \cap B$, then $(A, B)$ is a rigid separation of $(G, S)$ of order at most $l - 1$, which is a contradiction to Claim~\ref{claim1}.
So we assume that $\{u,v\} \subseteq A \cap B$.
By the minimality of $B'$, $(G[B], A \cap B)$ satisfies (ii).
Since $\rho(G[B] - A \cap B) = \rho(G[B] - A) \ge \rho(G'[B'] - A') \ge \frac{p}{2}|G'[B'] - A'| = \frac{p}{2}|G[B] - A|$, we see $(G[B], A \cap B)$ satisfies (i), so it is massed and thus knitted.
Hence $(A, B)$ is a rigid separation of size at most $|A' \cap B'| + 1 \le l$, a contradiction to Claim~\ref{claim1} again.

So we may assume that $(G',S)$ violates (i).
Then
\[ \rho(V(G') - S) \le \frac{p}{2}|V(G') - S| = \frac{p}{2}|V(G) - S| - \frac{p}{2}< \rho(V(G) - S) - \frac{p}{2}. \]

As one of $u, v$ is not in $S$, edges that are counted in $\rho(V(G) - S)$ but not in $\rho(V(G') - S)$ include the following: the edge $uv$, one of $wu$ and $wv$ when $w$ is adjacent to both $u$ and $v$, and $vt_i$ when $u=s_i$. So $\rho(V(G') - S)=\rho(V(G) - S) - 1 - r-\epsilon$, where $r$ is the number of common neighbors of $u$ and $v$, and $\epsilon=1$ if $vt_i\in E(G)$ and $u=s_i$ and $\epsilon=0$ otherwise.  It follows that $r > \frac{p}{2} - 1-\epsilon$.
Hence $u$ and $v$ have at least $\left\lfloor\frac{p}{2}\right\rfloor-\epsilon$ common neighbors, and when $\epsilon=1$, $v$ is adjacent to both $s_i$ and $t_i$ and $u=s_i$ for some $i$.
\end{proof}

\begin{claim}\label{claim3}
$\rho(V(G) - S) \le \frac{p}{2}|V(G) - S| + 1$.
\end{claim}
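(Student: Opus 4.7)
The plan is a proof by contradiction against clause (4) of $p$-minimality. Assume $\rho(V(G)-S) > \frac{p}{2}|V(G)-S|+1$. By Claim~\ref{claim2} every vertex in $V(G)-S$ has degree at least $\lfloor p/2\rfloor+1$, so there exists an edge $e=xy$ with at least one endpoint in $V(G)-S$. The pair $(G-e,S)$ preserves clauses (2) (still not knitted, $|S|\le l$) and (3) ($|V|$ unchanged), and has strictly smaller $\rho(V-S)$; if it were also $p$-massed, clause (4) of the $p$-minimality of $(G,S)$ would be violated. So $(G-e,S)$ is not $p$-massed. Condition (i) of $p$-massed still holds in $(G-e,S)$ since $\rho_{G-e}(V-S) = \rho_G(V-S)-1 > \frac{p}{2}|V-S|$ by our hypothesis, so condition (ii) must fail: there is a separation $(A,B)$ of $(G-e,S)$ of order at most $|S|-1$ with $\rho_{G-e}(B-A) > \frac{p}{2}|B-A|$. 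Because $(G,S)$ satisfies condition (ii), $(A,B)$ is not a separation of $G$, so $e$ must cross it; we may assume $x\in A-B$ and $y\in B-A$, and then $S\subseteq A$ forces both $x$ and $y$ into $V(G)-S$.

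I would then aim to contradict clause (3) of $p$-minimality by exhibiting the strictly smaller candidate pair $(G[B], A\cap B)$: note that $|V(G[B])| < |V(G)|$ since $x\notin B$, and $|A\cap B|\le|S|-1\le l$. Verifying that $(G[B], A\cap B)$ is $p$-massed is a routine case analysis. Condition (i) holds because $e\not\subseteq B$ gives $\rho_{G[B]}(B-A) = \rho_{G-e}(B-A) > \frac{p}{2}|B-A|$. For condition (ii), given any sub-separation $(A',B')$ of $(G[B], A\cap B)$ of order at most $|A\cap B|-1$, the containment $A\cap B\subseteq A'$ yields $A\cap B'\subseteq A'$, hence $B'-A' = B'-(A\cup A')$; then either $(A\cup A', B')$ (when $y\in A'$) or $(A\cup A', B'\cup\{x\})$ (otherwise) is a separation of $G$ of order at most $|S|-1$, and condition (ii) for $(G,S)$ supplies the required bound on $\rho_{G[B]}(B'-A')$.

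The main obstacle will be showing that $(G[B], A\cap B)$ is not knitted. My plan is the contrapositive: if $(G[B], A\cap B)$ were knitted, then the separation $(A, B\cup\{x\})$ of $G$, which has order $|A\cap B|+1\le |S|$, would be rigid — its $B$-side $(G[B\cup\{x\}], (A\cap B)\cup\{x\})$ would be knitted — directly contradicting Claim~\ref{claim1}. The delicate point is lifting a knit of $(G[B], A\cap B)$ to one of $(G[B\cup\{x\}], (A\cap B)\cup\{x\})$. When the partition block containing $x$ is a singleton the lift is immediate by setting $C=\{x\}$ and knitting the remainder inside $G[B]$; the hard case is when $x$ is paired with some $w\in A\cap B$, for we must realize a connected subgraph of $G[B\cup\{x\}]$ meeting both. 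The plan here is to use the edge $xy$ to pull $x$ into $B$ via $y$, and then exploit the knitted property of $(G[B], A\cap B)$ together with the high degree of $y$ in $B$ (also guaranteed by Claim~\ref{claim2}) to re-route Steiner vertices so that the connected piece assigned to $w$ also contains $y$. Making this re-routing precise — ensuring that the remaining blocks of the partition can still be simultaneously knitted after committing $y$ to the piece containing $w$ — is essentially a Menger-like maneuver inside the knit and will be the crux of the argument.
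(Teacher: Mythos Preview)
Your setup matches the paper's: delete an edge $e$ with an end outside $S$, observe that $(G-e,S)$ cannot be $p$-massed (else minimality is violated), and split into the cases where (i) or (ii) of Definition~\ref{def:massed} fails. You assume the claim fails so that (i) survives, forcing (ii) to fail. Up to this point the arguments are equivalent.

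The divergence is in how you dispose of the (ii)-failure, and here your route is both more complicated and incomplete. The paper's argument is a two-line counting step: if $(A,B)$ is the offending separation of $(G-e,S)$ with $|A\cap B|\le l-1$, then the endpoints $u,v$ of $e$ lie in $A-B$ and $B-A$ respectively, so every common neighbor of $u$ and $v$ lies in $A\cap B$, giving $|N(u)\cap N(v)|\le l-1$. But Claim~\ref{claim2} forces $|N(u)\cap N(v)|\ge \lfloor p/2\rfloor-1$, and since $l\le\lfloor p/2\rfloor-1$ this is a contradiction. No further structure is needed.

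Your plan instead builds the candidate pair $(G[B],A\cap B)$ and tries to contradict clause (3) of $p$-minimality. The massedness verification is fine, but the ``not knitted'' step is a genuine gap that you yourself flag: lifting a knit of $(G[B],A\cap B)$ to a knit of $(G[B\cup\{x\}],(A\cap B)\cup\{x\})$ when $x$ is paired with some $w\in A\cap B$ is not automatic. The only edge from $x$ into $B-A$ is $xy$, so you must absorb $y$ into the block containing $w$; but nothing guarantees that the remaining blocks of $A\cap B$ can still be knitted in $G[B]-y$, and ``Menger-like maneuver'' is not a proof. (Minor side note: $S\subseteq A$ only forces $y\notin S$, not $x\notin S$.) The common-neighbor count via Claim~\ref{claim2} avoids this detour entirely.
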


\begin{proof}
Consider the graph $G-e$ for some edge $e\in E(G)$ which does not have both ends in $S$. If $(G-e,S)$ is $p$-massed, then by the minimality of $(G,S)$ the pair $(G-e,S)$ is knitted, and consequently, $(G,S)$ is knitted as well, a contradiction. Thus $(G-e,S)$ is not $p$-massed, and so fails (i) or (ii). If $(G-e, S)$ fails (ii), then $(G-e, S)$ contains a separation $(A, B)$ with $|A \cap B| \le l - 1$.
It follows that $u \in A - B$ and $v \in B - A$, since otherwise $(A, B)$ is a separation in $(G, S)$ violating (ii). Then $|N(u) \cap N(v)| \le |A \cap B| \le l - 1$. By Claim~\ref{claim2}, $|N(u) \cap N(v)| \ge \left\lceil\frac{p}{2}\right\rceil-1$.
So $\left\lceil\frac{p}{2}\right\rceil -1\le l - 1 \le \left\lceil\frac{p}{2}\right\rceil - 2$, a contradiction. Therefore $(G-e, S)$ fails (i), that is, $\rho(V(G-e) - S) \le \frac{p}{2}|V(G-e) - S|$. So $\rho(V(G) - S) \le \frac{p}{2}|V(G) - S| +1$.
\end{proof}

\begin{claim}\label{Claim4}
Let $\delta^*$ be the minimum degree in $G$ among the vertices of $V(G)-S$. Then $\delta^*<p$. 
\end{claim}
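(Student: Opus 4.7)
The plan is to derive a contradiction from assuming $\delta^*\ge p$ via a double-counting argument combined with Claim~\ref{claim3}, and then extracting a small separation that violates the $p$-massed property.

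First I would record the edge-counting identity
\[
\rho(V(G)-S)\;=\;\tfrac{1}{2}\sum_{v\in V(G)-S}d_G(v)\;+\;\tfrac{1}{2}|E(V(G)-S,S)|,
\]
obtained by double-counting: an edge inside $V(G)-S$ contributes $2$ to the degree sum and $1$ to $\rho$, while an edge between $V(G)-S$ and $S$ contributes $1$ to both. If $\delta^*\ge p$, the degree sum is at least $p|V(G)-S|$, which yields the lower bound $\rho(V(G)-S)\ge \tfrac{p}{2}|V(G)-S|+\tfrac{1}{2}|E(V(G)-S,S)|$. Combined with Claim~\ref{claim3}'s upper bound $\rho(V(G)-S)\le \tfrac{p}{2}|V(G)-S|+1$, this forces $|E(V(G)-S,S)|\le 2$.

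Next, I would extract a separation of small order. Let $T\subseteq S$ be the set of vertices of $S$ with at least one neighbor in $V(G)-S$; then $|T|\le |E(V(G)-S,S)|\le 2$. Taking $A=S$ and $B=(V(G)-S)\cup T$, the pair $(A,B)$ is a separation of $(G,S)$ (all edges from $V(G)-S$ to $S$ terminate in $T$, so there are no edges between $A-B=S-T$ and $B-A=V(G)-S$), and its order is $|T|\le 2$. Assuming $|S|\ge 3$ so that $|T|\le 2\le |S|-1$, condition (ii) in Definition~\ref{def:massed} applies and gives
\[
\rho(V(G)-S)\;=\;\rho(B-A)\;\le\;\tfrac{p}{2}|B-A|\;=\;\tfrac{p}{2}|V(G)-S|,
\]
directly contradicting condition (i), namely $\rho(V(G)-S)>\tfrac{p}{2}|V(G)-S|$.

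The main obstacle I anticipate is the boundary case $|S|\le 2$, where the separation extracted above has order equal to $|S|$ rather than below $|S|-1$, so condition (ii) does not apply. However, $(G,S)$ being non-knitted with $|S|\le 2$ forces $G$ to be disconnected (with $s_1,s_2$ in different components if $|S|=2$, and similarly $|S|\le 1$ is trivially knitted whenever $V(G)\neq \emptyset$); in any such degenerate configuration the $p$-massed hypothesis is easily shown to fail, so this case is handled by a brief separate argument and does not disturb the main line. The rest of the proof is essentially a bookkeeping verification, with the key creative step being the observation that Claim~\ref{claim3}'s sharp upper bound on $\rho(V(G)-S)$ leaves no room for both high minimum degree off $S$ and nontrivial interaction between $V(G)-S$ and $S$.
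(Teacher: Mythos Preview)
Your argument is correct and reaches the same contradiction, but the route differs from the paper's in one key step. Both proofs begin with the identity
\[
2\rho(V(G)-S)=\sum_{v\in V(G)-S}d(v)+\sum_{x\in S}f(x),
\]
where $f(x)$ counts the neighbors of $x$ in $V(G)-S$, and both combine this with Claim~\ref{claim3} to obtain $\sum_{x\in S}f(x)\le 2$. From here the paper proceeds \emph{pointwise}: it first observes $f(x)\ge 1$ (otherwise $(S,V(G)-x)$ is a separation of order $|S|-1$ forcing $\rho(V(G)-S)\le\frac{p}{2}|V(G)-S|$), and then invokes Claim~\ref{claim2} to bootstrap to $f(x)\ge\lfloor p/2\rfloor-1-(l-2)+1\ge 3$ for every $x\in S$, so that $3|S|\le 2$ already contradicts $S\neq\emptyset$. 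You instead work \emph{globally}: from $\sum f(x)\le 2$ you extract the set $T$ of vertices of $S$ touching $V(G)-S$, observe $|T|\le 2$, and use the separation $(S,(V(G)-S)\cup T)$ of order $|T|\le|S|-1$ to force $\rho(V(G)-S)\le\frac{p}{2}|V(G)-S|$ via condition (ii), contradicting (i).

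Your approach has the virtue of being self-contained---it does not need the common-neighbor bound of Claim~\ref{claim2}---but it buys this at the price of the boundary case $|S|\le 2$, which the paper's argument handles uniformly (it only needs $|S|\ge 1$). Your sketch for that case is correct: when $|S|\le 1$ the pair is trivially knitted, and when $|S|=2$ non-knittedness forces $s_1,t_1$ into distinct components, so that separations of order at most $1$ based on those components sum to give $\rho(V(G)-S)\le\frac{p}{2}|V(G)-S|$. It would be worth spelling this out rather than leaving it as a remark.
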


\begin{proof}
For $x\in S$ let $f(x)$ be the number of neighbors of $x$ in $V(G)-S$. Clearly, $f(x)\ge 1$, otherwise $(S, V(G)-x)$ is a separation of $(G,S)$ violating (2).  Then by Claim~\ref{claim2}, $f(x)\ge \lfloor \frac{p}{2}\rfloor-1-(l-2)+1\ge 3$. If $\delta^*\ge p$, then from Claim~\ref{claim3},
\begin{align*}
   p|V(G)-S|+2\ge 2\rho(V(G)-S)&=\sum_{v\in V(G)-S} d(v)+\sum_{x\in S} f(x) \ge p|V(G)-S|+3|S|,
\end{align*}
a contradiction, because $S\not=\emptyset$.
\end{proof}

Let $T$ be the set of vertices in $G-S$ with degree at most $p-1$. For each $v\in T$, let $H_v=G[N[v]]$. Then $n(H_v)\le p$ and $d_{H_v}(u)\ge \delta(H_v)\ge \lfloor \frac{p}{2}\rfloor-\epsilon+1$.  If the minimum degree of $H_v$ is at least $\lfloor \frac{p}{2}\rfloor+1$, then we obtain $H$ with $$n(H)\le p \text{ and }  \delta(H)\ge \lfloor \frac{p}{2}\rfloor+1.$$

We may assume that some vertices in $H_v$ have degree $\lfloor \frac{p}{2}\rfloor$. Then by Claim 4.2, they are in $S$, and moreover, if $v\in T$ has exactly two neighbors in $S$ (namely, $s_i$ and $t_i$),  then at most two vertices in $H_v$ have degree $\lfloor \frac{p}{2}\rfloor$ and the rest has degree at least $\lfloor \frac{p}{2}\rfloor+1$.  Let $T_1\subseteq T$ be the set of vertices $v\in T$ so that $H_v$ contains at most two vertices of degree $\lfloor \frac{p}{2}\rfloor$, and $T_2=T-T_1$. It implies that for $v\in T_2$, $H_v$ contains more than two vertices of degree $\lfloor \frac{p}{2}\rfloor$, thus $v$ is adjacent to at least four vertices (two pairs) of $S$.   If $d(v)\le p-3$ for $v\in T_1$ or $d(v)\le p-5$ for $v\in T_2$, then we obtain an $H$ with at most two (non-adjacent) vertices with degree $\lfloor \frac{p}{2}\rfloor$ and $$n(H)\le p-2 \text{ and }  \delta(H)\ge \lfloor \frac{p}{2}\rfloor.$$
or an $H$ with  $$n(H)\le p-4 \text{ and }  \delta(H)\ge \lfloor \frac{p}{2}\rfloor.$$
So we assume that for each $v\in T_1$, $d_G(v)\ge p-2$ and  $\delta(H_v)=\lfloor \frac{p}{2}\rfloor$, and for each $v\in T_2$, $d_G(v)\ge p-4$ and  $\delta(H_v)=\lfloor \frac{p}{2}\rfloor$.
For $x\in S$ let $f(x)$ be the number of neighbors of $x$ in $V(G)-S$.

Note that every vertex in $T_1$ is adjacent to at least two vertices in $S$, and every vertex in $T_2$ is adjacent to at least four vertices in $S$; let $E'$ be the set of $2|T_1| + 4|T_2|$ edges we obtain this way.
We claim that there are at least 3 edges with one end in $S$ that do not belong to $E'$.
Note that the edges of $E'$ come in pairs, so that, for any pair of the form $\{ s_i, t_i \}$ and any $y \in T$, $s_i y \in E'$ if and only if $t_i y \in E'$.
If, for any pair $\{ s_i, t_i \}$ in $S$, each of $s_i$ and $t_i$ is incident to at most 1 edge in $E'$ (note that this is automatically the case if $s_i = t_i$, since no unpaired vertex in $S$ is incident to any edge of $E'$), then, arguing as in Claim~\ref{Claim4}, we have $f(s_i) \geq \left\lfloor \frac{p}{2} \right\rfloor - 1 - (l - 2) + 1 \ge 3$ and $f(t_i) \ge 3$, giving us 4 edges outside of $E'$.
If, for any two pairs $\{ s_i, t_i \}$ and $\{ s_j, t_j \}$ in $S$, each of the vertices $s_i, t_i, s_j, t_j$ is incident to at most 2 edges of $E'$, so, since $\min \{ f(s_i), f(t_i), f(s_j), f(t_j) \} \ge 3$, we again have 4 edges that do not belong to $E'$.
Thus we may assume that $S$ has at most one pair $\{ s_i, t_i \}$ such that each of $s_i$ and $t_i$ is incident to exactly 2 edges of $E'$, with every other pair of vertices in $S$ being incident to at least 3 edges in $E'$.
For every set $S_0 = \{ s_{i_1}, t_{i_1}, \dotsc, s_{i_m}, t_{i_m} \}$ of $m$ pairs in $S$, consider the set $T_0 = \{ y \in T : y s \in E' \text{ for some } s \in S_0\}$.
Note that $|T_0| \geq m$: the number of edges in $E'$ with an end in $S_0$ is at least $3(2m)-2 = 6m-2$, so, since every vertex in $T_0$ is incident to at most 4 edges of $E'$, we have $|T_0| \geq \frac{6m-2}{4} \geq m$.
Then, by Hall's marriage theorem, every pair of vertices $\{ s_i, t_i \}$ has a distinct common neighbor in $T$, so that $(G,S)$ is knitted and therefore not minimal, a contradiction.

By Claim~\ref{claim3},  we have
\begin{align*}
    p|V(G)-S|+2&\ge 2\rho(V(G)-S)=\sum_{v\in V(G)-T-S} d(v)+\sum_{v\in T}d(v)+\sum_{x\in S} f(x)\\
    &\ge p|V(G)-T-S|+(p-2)|T_1|+(p-4)|T_2|+2|T_1|+4|T_2|+3= p|V(G)-S|+3,
\end{align*}
a contradiction.

\section{Knitted subgraph in dense graphs: a proof of Lemma~\ref{knitted1}}\label{finding}

In this section, we prove Lemma~\ref{knitted1}.
We will only give the detailed proof of the case where $p=42$.  The proofs for $p=30$ and for $p=18$ are similar but more tedious, 
so these proofs will be relegated to a pair of appendices.

Whether we have $n(H) \leq k$ and $\delta(H) \geq \lfloor \frac{k}{2} \rfloor$ or $n(H) \leq k - 2$ and $\delta(H) = \lfloor \frac{k}{2} \rfloor$, we have $n(H) \leq 2\delta(H)- 1$, so it suffices to prove the following lemma:

\begin{lemma}
\label{l:23case}
Let $H$ be a graph, $v \in V(H)$ such that $H = N[v]$.
Suppose $\delta(H) \geq 21$ and $|H| = n \leq \min \{ 2 \delta(H) - 1, 42 \}$.
Then $H$ has a $(2,2,2,2,1)$-knitted subgraph.
\end{lemma}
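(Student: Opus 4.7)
The plan is to fix any partition of the $9$ distinguished vertices $D = \{s_1, t_1, \ldots, s_4, t_4, s_5\}$ and construct four pairwise disjoint connected subgraphs $C_1, \ldots, C_4$ of $H - s_5$ with $\{s_i, t_i\} \subseteq C_i$; the singleton part is handled trivially by $C_5 = \{s_5\}$. Two structural facts will be used throughout. First, the universal vertex $v$ of $H$ is adjacent to every other vertex. Second, the hypothesis $n \le 2\delta(H) - 1$ gives, for every pair of non-adjacent vertices $x, y$, the common-neighbor count $|N(x) \cap N(y)| \ge 2\delta(H) - (n-2) \ge 3$, so $H$ has diameter at most $2$ and $v$ is automatically a common neighbor of every non-adjacent pair.

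Let $P = \{i : s_i t_i \notin E(H)\}$ and for $i \notin P$ take the trivial $C_i = \{s_i, t_i\}$. For each $i \in P$ I would first try a short connector $C_i = \{s_i, u_i, t_i\}$ with $u_i \in A_i \setminus D$, where $A_i = N(s_i) \cap N(t_i)$. Because $|D \setminus \{s_i, t_i\}| = 7$ while only $|A_i| \ge 3$ is guaranteed, the only common neighbor guaranteed to lie outside $D$ is $v$ (when $v \notin D$); for $|P| \ge 2$, one reserves $v$ for the pair with smallest $A_i \setminus (D \cup \{v\})$ and applies Hall's theorem to the bipartite graph between $P$ and $V(H) \setminus D$ with edges $\{i, u\}$ whenever $u \in A_i \setminus D$. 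If $v \in D$, the same strategy applies after fixing the role of $v$ in advance (for instance $C_i = \{v, t_i\}$ when $v = s_i$, or $C_5 = \{v\}$ when $v = s_5$).

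If Hall's condition fails on some set $P' \subseteq P$, I switch to long connectors $C_i = \{s_i, u_i, w_i, t_i\}$ with $u_i \in N(s_i) \setminus D$, $w_i \in N(t_i) \setminus D$ and $u_i w_i \in E(H)$. Each set $N(s_i) \setminus D$ has size at least $\delta(H) - 8 \ge 13$ inside a ground set $V(H) \setminus D$ of size at most $n - 9 \le 33$, and since the total number of non-edges of $H$ is at most $\binom{n}{2} - n\delta(H)/2$, the bipartite subgraph between $N(s_i) \setminus D$ and $N(t_i) \setminus D$ contains many edges; a second Hall application selects pairwise disjoint long connectors.

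The main obstacle will be the extremal configuration $|P| = 4$ in which all four common-neighbor sets $A_i$ are tight at size $3$ and nearly coincide with a small subset of $D$, so that the short-connector step fails for every pair in $P$ and the long-connector step must fit up to eight internal vertices into the at most $33$ vertices of $V(H) \setminus D$ without collisions, while simultaneously producing the eight required cross-edges $u_i w_i$. This is precisely where the numerical threshold $\delta(H) \ge 21$ is consumed, and where a swap argument --- exchanging $v$ with another common neighbor when that increases the usable portion of the ground set --- will be needed to force Hall's condition. Everything else reduces to routine degree- and edge-counting bookkeeping in the case split on $|P|$.
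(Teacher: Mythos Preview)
Your proposal attempts to prove directly that $H$ itself is $(2,2,2,2,1)$-knitted by greedy path selection and Hall-type arguments. This is stronger than what the lemma asks (it only needs \emph{some} knitted subgraph), and the sketch has real gaps at exactly the point you flag as the ``main obstacle.''

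Concretely: with $n = 2\delta(H)-1$ the only lower bound you get on $|N(s_i)\cap N(t_i)|$ is $3$, and all three common neighbours may lie in $D$ (take for instance $s_5 = v$ and arrange the four pairs so that the two non-$v$ common neighbours of $(s_i,t_i)$ are $s_j,t_j$ for some other $j$ --- this is easy to set up when $H-v$ is two cliques of size $20$ joined by a perfect matching). Then short connectors fail for \emph{every} pair simultaneously, and you must fall back on length-$3$ paths. But selecting four pairwise disjoint pairs $(u_i,w_i)$ with $u_i \in N(s_i)\setminus D$, $w_i \in N(t_i)\setminus D$, and $u_iw_i\in E(H)$ is not a system of distinct representatives; there is no ``second Hall application'' that handles the adjacency constraint between the two chosen vertices. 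Your edge-count remark (``the bipartite subgraph contains many edges'') does not give disjointness across the four pairs, and the unspecified ``swap argument'' is precisely where all the work lies. The phrase ``routine bookkeeping'' is not warranted here.

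The paper's proof proceeds entirely differently. It assumes $H$ has no $(2,2,2,2,1)$-knitted subgraph, which yields that $H$ has no $K_9$ and no subgraph in which every non-adjacent pair has $\ge 11$ common neighbours. It then fixes a witness $u_0,u_1,v_1,\dots,u_4,v_4$ to the failure of $H$ being knitted, builds partial linkages $C_0,\dots,C_4$ minimizing a certain complexity, and exploits a \emph{reroute} technique (Lemmas~\ref{l:si} and~\ref{l:si2}) to control the quantities $s_i = |N(a)\cap N(b)\cap C_i| - |C_i\setminus(N(a)\cup N(b))|$. These constraints force the components $A,B$ of $H - C$ to be so dense that one of them, together with $v$, satisfies the common-neighbour criterion of Corollary~\ref{c:common} and is therefore itself knitted --- contradicting the assumption. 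The contradiction is found in a \emph{subgraph} of $H$, not by linking the original nine terminals; this is why the direct approach you propose does not parallel the paper and why the threshold $\delta(H)\ge 21$ enters through the reroute inequalities rather than through a Hall deficiency count.
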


Before proving this lemma, we will introduce the notation we will make use of in this proof as well as in the proofs in the appendices.
If a graph $H$ were a counterexample to Lemma~\ref{l:23case}, then $H$ itself would not be $(2,2,2,2,1)$-knitted, so there would be vertices $u_0, u_1, v_1, u_2, v_2, u_3, v_3, u_4, v_4 \in V(H)$ such that $H$ would not have $(u_1, v_1)$-, $(u_2, v_2)$-, $(u_3, v_3)$-, and $(u_4, v_4)$-paths that would be disjoint from each other and from $u_0$.
We define $C = C_0 \cup C_1 \cup C_2 \cup C_3 \cup C_4 \subseteq V(H)$ as follows:
\begin{enumerate}[(i)]
	\item $C_0 = \{ u_0 \}$.
	\item For $i \in \{ 1, 2, 3 \}$, if $C_0, \dotsc, C_{i-1}$ have been defined and the graph $H \setminus \left( \bigcup_{j=0}^{i-1} C_j \cup \{ u_{j+1}, v_{j+1}, \dotsc, u_4, v_4 \} \right)$ has a $(u_i, v_i)$-path with at most 5 vertices, then $C_i$ is the vertex set of that path. Otherwise, $C_i = \{ u_i, v_i \}$. (In this latter case, $C_i$ is necessarily disconnected.)
	\item $C_4 = \{ u_4, v_4 \}$.
	\item Subject to (i)-(iii), rearranging the pairs $(u_1, v_1), \dotsc, (u_4, v_4)$ if necessary, as many of the $C_i$ as possible induce connected subgraphs of $H$.
	\item Subject to (i)-(iv), rearranging the pairs $(u_1, v_1), \dotsc, (u_4, v_4)$ if necessary, $C$ has as few vertices as possible.
\end{enumerate}
In particular, for every $i \in [4]$, either $C_i$ is the vertex set of an induced $(u_i, v_i)$-path or $C_i = \{ u_i, v_i \}$.
Rearranging if necessary, we may assume $C_1, \dotsc, C_s$ induce connected paths and $C_{s+1}, \dotsc, C_4$ are pairs of non-adjacent vertices.
We may also assume $|C_1| \leq \dotsb \leq |C_s|$.

Suppose $z_1, y, z_2$ are three consecutive vertices on some $C_i$, and let $C_j = \{ u_j, v_j \}$ be a pair of non-adjacent vertices.
If there is a $(u_j, v_j)$-path in $H$ whose only internal vertex in $C$ is $y$, and if there is a vertex $x \in H - C$ that is adjacent to both $z_1$ and $z_2$, then our original choice of $C$ was not minimal with respect to the number of components: we can replace the segment $z_1 y z_2$ on $C_i$ with $z_1 x z_2$, and we can replace $C_j$ with that $(u_j, v_j)$-path that goes through $y$.
We will refer to this operation as an \textbf{$(x, y)$-reroute} of $C_i$ and $C_j$. For each $i\in\{0,1,2,3,4\}$, we call a vertex $u$ is complete to a vertex set $U$ if $u$ is adjacent to every vertex in $U$,
 $u$ is anticomplete to $U$ if $u$ is adjacent to no vertex in $U$.

 The following two lemmas, which may be of independent interest, provide powerful tools in our proofs. 
 
\begin{lemma}
\label{l:si}
For $j \in [4]$ such that either $H[C_j]$ is disconnected or $|C_j| \geq 6$, let $A_j = N(u_j) - C$ and $B_j = N(v_j) - C$.
In the graph $H - (C - \{ u_j, v_j \})$, let $A$ be the component containing $u_j$ and let $B$ be the component containing $v_j$.
Let $(A^*, B^*)$ be either the pair $(A, B)$ (in the case where $H[C_j]$ is disconnected) or the pair $(A_j, B_j)$ (in either the case where $H[C_j]$ is disconnected or the case where $|C_j| \geq 6$).
Let $a \in A^* - u_j$ and $b \in B^* - v_j$, and, for $i\ne j$ and $i\in \{ 0, 1, \dotsc, 4 \}$, let \[ s_i = |N(a) \cap N(b) \cap C_i| - |C_i - (N(a) \cup N(b))|. \]
\begin{enumerate}[(a)]
	\item If $H[C_i]$ is disconnected, then $s_i \leq 0$. 
	\item If $H[C_i]$ is connected, then no two neighbors of $a$ on $C_i$ have at least two vertices between them on the path $H[C_i]$. In particular, each of $a$ and $b$ has at most 3 neighbors in $C_i$. 
	\item If $H[C_i]$ is connected, then $-|C_i| \leq s_i \leq \min \{ |C_i|, 6 - |C_i| \}$. 
	\item Let $t_a = |A^* - N[a]|$ and $t_b = |B^* - N[b]|$. Then $\sum_{i=0}^4 s_i \geq d(a) + d(B) - (|H| - 2) + t_a + t_b$. 
	\item If $(A^*, B^*) = (A, B)$, $H[A]$ and $H[B]$ are 2-connected, $a \neq u_j$, $b \neq v_j$, and $|H - (A \cup B)| \leq \delta(H) - 2$, then $s_i \neq 3$. 
\end{enumerate}
\end{lemma}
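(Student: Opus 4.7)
The plan is to prove the five parts in the order (b), (c), (a), (d), (e), since the rerouting argument of (b) is the fundamental tool that drives (c) and (e), while (a) and (d) are largely independent but use similar ideas.

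For (b), I would argue by contradiction via a rerouting of $C_i$: if $a \in A^* - u_j$ has two neighbors $z_1, z_2$ on the path $H[C_i]$ with at least two vertices strictly between them, then since $a \in H - C$ one can replace the subpath of $C_i$ between $z_1$ and $z_2$ with the three-vertex segment $z_1 a z_2$. The resulting family still satisfies (i)--(iv) but has strictly fewer total vertices, violating (v). The three-neighbor bound then follows by pigeonhole: any four vertices on a path of at most five vertices contain a pair with at least two vertices between them.

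For (c), I would unfold $s_i$ algebraically. Setting $\alpha = |N(a) \cap N(b) \cap C_i|$, $\beta = |(N(a) \cup N(b)) \cap C_i| - \alpha$, and $\gamma = |C_i| - \alpha - \beta$, one gets $s_i = \alpha - \gamma = 2\alpha + \beta - |C_i|$ where $d_{C_i}(a) + d_{C_i}(b) = 2\alpha + \beta$. Part (b) gives $d_{C_i}(a) + d_{C_i}(b) \le 6$, hence $s_i \le 6 - |C_i|$; the bounds $-|C_i| \le s_i \le |C_i|$ are immediate. Part (a) follows similarly: if $s_i \ge 1$ on a disconnected $C_i = \{u_i, v_i\}$, a case check shows that some $x \in \{a, b\}$ is adjacent to both $u_i$ and $v_i$, and since $x \in H - C$ sits in the residual graph used to define $C_i$ under (ii), the three-vertex path $u_i x v_i$ would have made $C_i$ connected, contradicting its disconnectedness.

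For (d), inclusion--exclusion gives $\sum_{i \ne j} s_i = |N(a) \cap C^*| + |N(b) \cap C^*| - |C^*|$ with $C^* = C - C_j$. Every neighbor of $a$ outside $C^*$ lies in its connected component of $H - C^*$, which is contained in $A$ in either choice of $(A^*, B^*)$ (using $A_j \subseteq A$ when $A^* = A_j$), yielding $|N(a) \cap C^*| \ge d(a) - (|A| - 1 - t_a)$ and the analogue for $b$. Combined with $|C^*| \le |H| - |A| - |B|$ (since $A, B \subseteq H - C^*$ are disjoint when $H[C_j]$ is disconnected), the stated inequality follows.

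Finally, (e) will be the main obstacle. Suppose $s_i = 3$; by (c), $|C_i| \le 3$, and by (a), $|C_i| = 3$, so $C_i = u_i m v_i$ with $a, b$ both adjacent to each of $u_i, m, v_i$. Since $m \in C - C_j \subseteq H - (A \cup B)$ and $|N(m)| \ge \delta(H)$, the density hypothesis $|H - (A \cup B)| \le \delta(H) - 2$ forces $|N(m) \cap (A \cup B)| \ge 3$, so $m$ has an extra neighbor $w \in (A \cup B) - \{a, b\}$. If $w \in A$, I perform an $(a, m)$-reroute: replace $u_i m v_i$ in $C_i$ by $u_i a v_i$, and replace $C_j$ by the $(u_j, v_j)$-path $u_j \rightsquigarrow w - m - b \rightsquigarrow v_j$, where $u_j \rightsquigarrow w$ sits in $H[A] - a$ (connected by 2-connectedness of $H[A]$) and $b \rightsquigarrow v_j$ lies in $H[B]$. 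The resulting path has only $m$ as internal $C$-vertex and avoids $a$, so the reroute strictly increases the number of connected $C_l$'s, contradicting (iv); the case $w \in B$ is symmetric via a $(b, m)$-reroute. The main hurdle here is producing the extra neighbor $w$ via the density hypothesis, and then constructing avoiding paths of the right length for (ii) using the 2-connectedness of $H[A]$ and $H[B]$; corner cases where $w \in \{u_j, v_j\}$, or where the rerouted path would otherwise exceed five vertices, require careful additional analysis.
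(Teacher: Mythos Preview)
Your proposal is correct and takes essentially the same approach as the paper: rerouting for (a) and (b), degree-counting on top of (b) for (c), inclusion--exclusion for (d), and a density-plus-reroute argument for (e). Your identity $\sum_{i \ne j} s_i = |N(a)\cap C^*| + |N(b)\cap C^*| - |C^*|$ in (d) is a clean reformulation of the paper's computation via $|N(a)\cap N(b)|$.

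Two small remarks. First, in (d), your claim that the component of $a$ in $H - C^*$ is \emph{contained in} $A$ fails in the case $|C_j|\ge 6$: with $C^* = C - C_j$, you are removing strictly less than $C - \{u_j,v_j\}$, so components can only grow, and the component of $a$ in $H - C^*$ contains $A \cup C_j$ (and possibly $B$). The paper avoids this by instead first observing that every common neighbor of $a$ and $b$ lies in $C$ (otherwise $u_j\,a\,w\,b\,v_j$ would be a five-vertex $(u_j,v_j)$-path, contradicting $|C_j|\ge 6$ via minimality), and then bounding $|N(a)\cap N(b)|$ directly. Second, your worry in (e) about the rerouted $C_j$-path exceeding five vertices is not actually an obstruction: in the one place (e) is invoked (the claim that $t=4$), all of $C_1,C_2,C_3$ are already connected, so the reroute links all four pairs simultaneously and directly contradicts the assumption that $H$ is not knitted---no appeal to the length constraint in rule (ii) is needed.
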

\begin{proof}
\begin{enumerate}[(a)]
	\item 	If $H[C_i]$ is disconnected for some $i \neq j$, then $C_i = \{ u_i, v_i \}$.
	If either $a$ or $b$ were complete to $C_i$, then we could add that vertex to $C_i$ to make $H[C_i]$ connected, contrary to the definition of $C$, so it must be the case that each of $a$ and $b$ has at most 1 neighbor in $C_i$.
	If $a$ and $b$ have a common neighbor in $C_i$, say $u_i$, then neither one is adjacent to $v_i$, so that $s_i = 0$.
	If $a$ and $b$ have no common neighbor in $C_i$, then $s_i \leq |N(a) \cap N(b) \cap C_i| = 0$.
	\item If $x_1 x_2 \dotsc x_k$ are consecutive vertices on $C_i$ such that $x_1, x_k \in N(a)$, then we can replace the segment $x_1 x_2 \dotsc x_k$ of $C_i$ with $x_1 a x_k$ to get a different choice for $C_i$.
	Since $|C_i|$ was chosen to be minimal, this different choice for $C_i$ cannot have fewer vertices than the original choice for $C_i$, so we much have $k \leq 3$, that is, the two neighbors of $a$ cannot have more than 1 vertex between them.
	By symmetry, the same is true for $b$.
	\item Clearly $s_i \geq -|C_i - (N[a] \cup N[b])| \geq -|C_i|$ and $s_i \leq |C_i \cap N(a) \cap N(b)| \leq |C_i|$.
	In the case where $a \neq u_j$ and $b \neq v_j$, suppose $a$ and $b$ have $t$ common neighbors on $C_i$.
	By part (b), each of $a$ and $b$ has at most 3 neighbors on $C_i$, so $a$ is adjacent to at most $3-t$ vertices of $C_i$ that are not neighbors of $b$ and vice versa.
	We then have $|[N(a) \cup N(b)] \cap C_i| \leq t + (3-t) + (3-t) = 6-t$.
	Thus \[ s_i = |N(a) \cap N(b) \cap C_i| - |C_i| + |[N(a) \cup N(b)] \cap C_i| \leq t - |C_i| + 6-t = 6-|C_i|. \]
	\item We have
	\begin{align*}
	|N(a) \cap N(b) \cap C| &= |N(a) \cap N(b)| \\
	&= |N(a)| + |N(b)| - |N(a) \cup N(b)| \\
	&= d(a) + d(b) - |H| + |H - [N(a) \cup N(b)]| \\
	&\geq d(a) + d(b) - |H| + |\{ a, b \}| + |A - N[a]| + |B - N[b]| + |C - (N[a] \cup N[b])| \\
	&\geq d(a) + d(b) + (|H| - 2) + t_a + t_b + |C - (N[a] \cup N[b])|
	\end{align*}
	(where $t_a = |A - N[a]|$ if $A^* = A$ and $t_a \leq |A - N[a]|$ if $A^* = A_j$).
	It follows that \[ \sum_{i=0}^4 s_i = |N(a) \cap N(b) \cap C| - |C - (N[a] \cup N[b])| \geq d(a) + d(b) + (|H| - 2) + t_a + t_b. \]
	\item If $|C_i| \neq 3$, then $s_i \leq 2$ by part (c), so we may assume $|C_i| = 3$; let $x$ be its middle vertex.
	If $s_i = 3$, then $\{ a, b \}$ is complete to $C_i$.
	This implies that $N(x) \cap (A \cup B) = \{ a, b \}$: otherwise, if there is $y \in N(x) \cap (A \cup B)$ that is neither $a$ nor $b$, we can perform an $(x, a)$- or $(x, b)$-reroute of $C_i$ and $C_j$.	Then, because $|N(x) \cap (A \cup B)| = 2$, we must have \[ \delta(H) \leq d(x) = |N(x) \cap (A \cup B)| + |N(x) - (A \cup B)| \leq 2 + |H - (A \cup B \cup \{ x \})|, \]
	implying that $|H - (A \cup B \cup \{ x \})| \geq \delta(H) - 2$ and so $|H - (A \cup B)| \geq \delta(H) - 1$.
\end{enumerate}\end{proof}

\begin{lemma}
\label{l:si2} 
Define $A^*$ and $B^*$ as in the previous lemma.
Let $a, a' \in A^* - u_j$ and $b, b' \in B^* - v_j$ be four distinct vertices, and, for $i \in \{ 0, 1, \dotsc, 4 \}$, let \[ s_i = |N(a) \cap N(b) \cap C_i| - |C_i - (N[a] \cup N[b])| \text{ and } s_i' = |N(a') \cap N(b') \cap C_i| - |C_i - (N[a'] \cup N[b'])|. \]
Suppose either $(A^*, B^*) = (A_j, B_j)$ or $H[A^*]$ and $H[B^*]$ are 2-connected, and suppose $s_i + s_i' \geq 3$ and $s_i\ge s_i'$.
\begin{enumerate}[(a)]
	\item $s_i + s_i' \in \{ 3, 4 \}$ and $|C_i| \in \{ 2, 3 \}$.
	\item If $|C_i| = 3$ and some vertex in $A$ is complete to $C_i$, then the middle vertex of $C_i$ is anticomplete to $B$.
	\item If $s_i + s_i' = 4$, then $s_i = s_i' = 2$ and each vertex in $\{ a, a', b, b' \}$ is complete to $\{ u_i, v_i \}$.
\end{enumerate}
\end{lemma}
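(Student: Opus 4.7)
My plan is to derive the three conclusions in turn, combining the bounds on $s_i$ from Lemma~\ref{l:si} with rerouting arguments that exploit the minimality properties built into $C$.

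For part (a), I would first observe that Lemma~\ref{l:si}(a) gives $s_i, s_i' \le 0$ whenever $H[C_i]$ is disconnected, and Lemma~\ref{l:si}(c) gives $s_i, s_i' \le \min\{|C_i|, 6-|C_i|\}$ when $H[C_i]$ is connected. Combined with $s_i + s_i' \ge 3$, this immediately forces $H[C_i]$ connected with $|C_i| \in \{2, 3, 4\}$ and $s_i + s_i' \le 4$. The remaining task is to rule out $|C_i| = 4$. Writing $C_i = u_i y_1 y_2 v_i$ and assuming without loss of generality $s_i = 2$, the identity $|N(a) \cap C_i| + |N(b) \cap C_i| = s_i + |C_i| = 6$ together with Lemma~\ref{l:si}(b) forces $|N(a) \cap C_i| = |N(b) \cap C_i| = 3$. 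If either $a$ or $b$ were adjacent to both $u_i$ and $v_i$, I could replace $C_i$ by the 3-vertex path $u_i a v_i$ (or $u_i b v_i$), contradicting the minimality of $|C|$. So each of $a, b$ misses exactly one endpoint, yielding $\{y_1, y_2\} \subseteq N(a) \cap N(b)$. For each of the resulting neighborhood configurations I would exhibit a reroute of the form: use $a$ (or $b$) in place of an interior vertex $y \in \{y_1, y_2\}$ of $C_i$ and use the freed $y$ to build a $(u_j, v_j)$-path through an auxiliary vertex drawn from $\{a', b'\}$ (or, in the $(A^*, B^*) = (A, B)$ case, via the 2-connectedness of $H[A], H[B]$; or, in the $(A^*, B^*) = (A_j, B_j)$ case, via direct $u_j, v_j$-adjacencies). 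The resulting reroute makes the previously disconnected $C_j$ connected, contradicting the choice of $C$.

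For part (b), suppose $|C_i| = 3$ with $C_i = u_i x v_i$, $a^* \in A$ complete to $C_i$, and for contradiction some $b^* \in B$ adjacent to $x$. I would replace the middle of $C_i$ by $a^*$ to form the new path $u_i a^* v_i$, freeing $x$ for use in a new $(u_j, v_j)$-path $u_j \to \cdots \to a^{**} \to x \to b^* \to \cdots \to v_j$, where $a^{**} \in A - a^*$ is a second $A$-neighbor of $x$, produced either from the 2-connectedness of $H[A]$ (plus a small subsidiary argument invoking $a'$ and the minimality of $|C|$) or from the explicit structure of $A_j = N(u_j) - C$. The resulting $C_j$ is connected, contradicting the definition of $C$.

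For part (c), we combine (a) and (b). By (a) we have $|C_i| \in \{2, 3\}$ and hence $s_i, s_i' \le 3$. If $s_i = 3$, then $|C_i| = 3$ with both $a$ and $b$ complete to $C_i$, which by (b) would forbid $b \in B$ from being adjacent to the middle of $C_i$, a contradiction; hence $s_i \le 2$, and $s_i + s_i' = 4$ forces $s_i = s_i' = 2$. When $|C_i| = 2$, $s_i = 2$ directly gives $\{a, b\}$ complete to $\{u_i, v_i\}$. When $|C_i| = 3$, the identity $|N(a) \cap C_i| + |N(b) \cap C_i| = 5$ with each term at most $3$ forces one of $a, b$ to be complete to $C_i$; then (b) forces the other's two neighbors on $C_i$ to be exactly $\{u_i, v_i\}$. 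The same argument applied to $a', b'$ completes the proof.

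The main obstacle will be the case analysis in part (a) excluding $|C_i| = 4$: although the structural constraints on the $a$- and $b$-neighborhoods on $C_i$ are tight, exhibiting a suitable reroute in each configuration requires carefully chosen replacement vertices drawn from $\{a', b'\}$ together with the 2-connectedness of $H[A], H[B]$ (or the direct $u_j, v_j$-adjacencies in $A_j, B_j$). The same technicality underlies the construction of the vertex $a^{**}$ in part (b).
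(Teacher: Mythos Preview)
Your overall strategy mirrors the paper's, but there are two concrete gaps.

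\textbf{The bound $s_i + s_i' \le 4$ in (a) does not follow ``immediately'' from Lemma~\ref{l:si}(c).} When $|C_i| = 3$, Lemma~\ref{l:si}(c) only gives $s_i, s_i' \le 3$, so a priori $s_i + s_i' \le 6$. The paper closes this by arguing \emph{inside} the proof of (a) that $s_i = 3$ is impossible: if $s_i = 3$ then $a$ and $b$ are both complete to $C_i$, and one then shows via reroutes that neither $a'$ nor $b'$ can be adjacent to the middle vertex $x$, nor complete to $\{u_i, v_i\}$, forcing $s_i' \le -1$ and hence $s_i + s_i' \le 2$. You defer ruling out $s_i = 3$ to part (c), invoking (b); but the paper's proof of (b) explicitly begins ``Since $s_i = 2$, \ldots'', i.e.\ it uses the conclusion of (a). So your ordering is circular unless you supply an independent proof of (b).

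\textbf{Your proposed proof of (b) does not stand on its own.} The reroute you describe needs a vertex $a^{**} \in (A \setminus \{a^*\}) \cap N(x)$, and you appeal to ``2-connectedness of $H[A]$'' or ``the explicit structure of $A_j$'' to produce it. Neither works: 2-connectedness of $H[A]$ governs paths \emph{within} $A$ and says nothing about which vertices of $A$ are adjacent to $x \in C_i \subseteq C$; likewise $A_j = N(u_j) \setminus C$ carries no information about $N(x)$. The paper's proof of (b) does not try to find an arbitrary second $A$-neighbor of $x$. Instead it uses $s_i = 2$ and $s_i' \ge 1$ to pin down the $C_i$-neighbourhoods of $a, a', b, b'$ precisely (e.g.\ if $b$ were complete to $C_i$, then $b'$ cannot be adjacent to $x$, cannot be complete to $\{u_i, v_i\}$, and then $s_i' \ge 1$ forces $a'$ to be complete to $C_i$), and the specific reroute uses those particular vertices. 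The same issue undermines your plan for excluding $|C_i| = 4$ in (a): the paper does not exhibit a direct reroute there either, but rather shows that an interior vertex of $C_i$ is forbidden to $a'$ (and one to $b'$), so each of $a', b'$ has at most two neighbours on $C_i$, whence $s_i' \le 0$, contradicting $s_i + s_i' \ge 3$.
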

\begin{proof}
\begin{enumerate}[(a)]
	\item By Lemma~\ref{l:si}(c), we have $s_i \leq 3$ and $s_i' \leq 3$ for each $i$.
	
	Suppose $s_i = 3$. 	Then $|C_i| = 3$, and $a$ and $b$ are complete to $C_i$.
	If we call the middle vertex of $C_i$ $x$, then $x$ is anticomplete to $\{ a', b' \}$, otherwise we can perform an $(x, a)$- or $(x, b)$-reroute of $C_i$ and $C_j$. 
	Moreover, neither $a'$ nor $b'$ can be complete to $\{ u_i, v_i \}$, otherwise we can perform an $(x, a')$- or $(x, b')$-reroute of $C_i$ and $C_j$. 
	So each of $a'$ and $b'$ has at most 1 neighbor in $C_i$; whether they have 1 common neighbor or 0, we get $s_i' \leq -1$ and so $s_i + s_i' \leq 2$.
	Thus, if $s_i + s_i' \geq 3$, it must be the case that neither $s_i$ nor $s_i'$ is equal to 3.
	We must then have $\max \{ s_i, s_i' \} = 2$, so that $s_i + s_i' \leq 4$, with equality if and only if $s_i = s_i' = 2$. Note that $s_i\ge s_i'$. So $s_i = 2$ and $s_i' \in \{ 1, 2 \}$.
	By Lemma~\ref{l:si}(c), we then have $2 \leq |C_i| \leq 4$.
	
	Suppose $|C_i| = 4$.
	Since $s_i = 2$, we either have $|N(a) \cap N(b) \cap C_i| = 2$ and $|C_i - (N[a] \cup N[b])| = 0$ or $|N(a) \cap N(b) \cap C_i| = 3$ and $|C_i - (N[a] \cup N[b]) = 1$.
	In either case, if we label $C_i$ as $u_i x y v_i$, then $\{ a, b \}$ is complete to $\{ x, y \}$, and each of $a$ and $b$ is adjacent to either $u_i$ or $v_i$.
	But then, if we assume $a$ is adjacent to $u_i$, then $x$ cannot be adjacent to $a'$, otherwise we can perform an $(x, a)$-reroute of $C_i$ and $C_j$.
	Similarly, there is an internal vertex of $C_i$ that is not adjacent to $b'$ (either $x$ or $y$, depending on whether $b$ is adjacent to $u_i$ or $v_i$).
	This means that neither $a'$ nor $b'$ can have 3 consecutive neighbors on $C_i$, which, by Lemma~\ref{l:si}(b), implies that neither $a'$ nor $b'$ can have 3 neighbors on $C_i$ at all.
	But if $\max \{ |N(a') \cap C_i|, |N(b') \cap C_i| \} \leq 2$, it must be the case that $s_i' \leq 0$, so $s_i + s_i' < 3$.
	So, if $s_i + s_i' \geq 3$, we must have $|C_i| \neq 4$.
	\item Suppose $|C_i| = 3$, and
	let $a^* \in A$ be complete to $C_i$; note that we do not require that $a^* \neq u_j$.
	Since $s_i=2$, it must be the case that, of $a$ and $b$, one is complete to $C_i$ and the other has exactly two neighbors in $C_i$.
	Suppose $b$ is complete to $C_i$.
	Then no other vertex of $B$ (not even $v_j$) is adjacent to the middle vertex $x$ of $C_i$, otherwise we could perform an $(x, b)$-reroute of $C_i$ and $C_j$ through $a^*$. 
	In particular, $b'$ is not adjacent to $x$. Since $s_i'\in\{1,2\}$, $b'$ is adjacent to at least one of $u_i$ and $v_i$. If $b'$ is adjacent to both $u_i$ and $v_i$, then we could perform an $(x, b')$-reroute of $C_i$ and $C_j$ through $a^*$. So by symmetry we may assume that $N(b')\cap C_i=\{u_i\}$. Then
 $a'$ must be complete to $C_i$ since $b'\ge1$. Now we could perform an $(x, a')$-reroute or $(x, a)$-reroute of $C_i$ and $C_j$, a contradiction. 
	Thus, if $|C_i| = 3$, then $b$ is not complete to $C_i$, so $a$ is complete to $C_i$ and the neighbors of $b$ on $C_i$ must be $u_i$ and $v_i$, for otherwise we can perform an  $(x, a')$-reroute or  $(x, b')$-reroute of $C_i$ and $C_j$ through $a$.
	Then $x$ is not adjacent to any vertex of $B$, otherwise we could perform an $(x, b)$-reroute of $C_i$ and $C_j$.
    
	\item Clearly, if $|C_i| = 2$ and $s_i + s_i' = 4$, then $\{ a, a', b, b' \}$ must be complete to $\{ u_i, v_i \}$.
	If $|C_i| = 3$, then, since $s_i = 2$, either $a$ or $b$ (without loss of generality, $a$) is complete to $C_i$, with the other vertex (in this case, $b$) having exactly 2 neighbors in $C_i$.
	Then, by part (b), the middle vertex $x$ of $C_i$ has no neighbor in $B$, so the neighbors of $b$ on $C_i$ must be $u_i$ and $v_i$ exactly.
	Likewise, since $s_i' = 2$ and $b'$ is not adjacent to $x$, $a'$ is complete to $C_i$ and $N(b') \cap C_i = \{ u_i, v_i \}$, so that each vertex in $\{ a, a', b, b' \}$ is complete to $\{ u_i, v_i \}$, as desired.
\end{enumerate}
\end{proof}

We will make use of the following results to show when a graph is $(2,2,2,2,1)$-knitted in this proof and when a graph is 4-linked or $(2,2,2,1)$-knitted in the proofs in the appendices:

\begin{proposition}
\label{p:prop1}
Let $k \geq 3$ be an integer, and let $L$ be a graph with $|L| \geq 2k+1$.
Suppose that every set of $k$ pairs of vertices in $L$ can be labeled $(u_1, v_1), \dotsc, (u_k, v_k)$ in such a way that, for every $i \in [k]$, either $u_i v_i \in E(L)$ or $u_i$ and $v_i$ have at least $2k-2+i$ common neighbors.
Then $L$ is $k$-linked.
Moreover, if each of these non-adjacent pairs has at least $2k-1+i$ common neighbors, then $L$ is $(2, \dotsc, 2, 1)$-knitted (with $k$ 2s).
\end{proposition}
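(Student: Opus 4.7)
The plan is a greedy construction of the linkage in increasing order of index, using either the edge $u_iv_i$ itself when it is present or a length-two path $u_iw_iv_i$ through a carefully chosen common neighbor. The hypothesized common-neighbor lower bounds are calibrated precisely so that at step $i$ there is always an unused common neighbor available.

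For the $k$-linked statement, given $2k$ prescribed terminals I would first apply the hypothesis to obtain a labeling $(u_1,v_1),\dotsc,(u_k,v_k)$ of the $k$ pairs so that for each $i$ either $u_iv_i \in E(L)$ or $|N(u_i) \cap N(v_i)| \ge 2k-2+i$. Then I would process $i = 1, 2, \dotsc, k$ in order, taking $P_i = u_iv_i$ in the adjacent case and otherwise taking $P_i = u_iw_iv_i$, where $w_i$ is any common neighbor of $u_i$ and $v_i$ lying outside the forbidden set consisting of the $2k-2$ terminals other than $u_i, v_i$ together with the previously committed internal vertices $w_1, \dotsc, w_{i-1}$. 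The forbidden set at step $i$ has size at most $(2k-2)+(i-1) = 2k+i-3$, strictly less than the guaranteed lower bound $2k-2+i$ on common neighbors, so a valid $w_i$ exists; the resulting paths are internally disjoint by construction.

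For the $(2,\dotsc,2,1)$-knitted statement, the extra singleton $u_0$ must also be avoided at every step. I would apply the same greedy construction, adjoining $u_0$ to the forbidden set; its size at step $i$ is then at most $1 + (2k-2) + (i-1) = 2k+i-2$, which is strictly less than the strengthened common-neighbor lower bound $2k-1+i$, so a valid $w_i$ again exists. Setting $C_0 := \{u_0\}$ and $C_i := V(P_i)$ for $i \in [k]$ gives the required family of pairwise vertex-disjoint connected subgraphs.

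No serious obstacle arises; the proof is essentially a bookkeeping exercise. The one subtle point is that the labeling furnished by the hypothesis must be fixed before the greedy procedure begins, since the lower bound on $|N(u_i) \cap N(v_i)|$ depends on the final index assigned to that pair. The size assumption $|L| \ge 2k+1$ ensures that all terminals fit inside $L$ and provides the ambient space in which the guaranteed common neighbors can be located.
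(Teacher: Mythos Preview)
Your proposal is correct and matches the paper's proof essentially step for step: both fix the labeling first, then greedily link the pairs in order $i=1,\dotsc,k$ using either the edge $u_iv_i$ or a path $u_iw_iv_i$ through a common neighbor chosen outside the other $2k-2$ terminals and the at most $i-1$ previously used internal vertices (plus $u_0$ in the knitted case), and both observe that the arithmetic $2k-2+i>(2k-2)+(i-1)$ (respectively $2k-1+i>(2k-2)+(i-1)+1$) guarantees such a $w_i$ exists.
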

\begin{proof}
Let $u_0, u_1, \dotsc, u_k, v_1, \dotsc, v_k \in V(L)$ be distinct.
If $u_1 v_1 \in E(L)$, we can connect $u_1$ to $v_1$ with a path of 2 vertices; if $u_1 v_1 \notin E(L)$ and $u_1$ and $v_1$ have at least $2k-2+1 = 2(k-1) + 1$ common neighbors, then they have a common neighbor that is not in $\{ u_2, \dotsc, u_k, v_2, \dotsc, v_k \}$, so we can use this common neighbor to connect them with a path on at most 3 vertices; if $u_1$ and $v_1$ have at least $2k-1+1 = 2(k-1) + 1 + 1$ common neighbors, then we can choose this common neighbor to be distinct from $u_0$ as well.
Suppose that, for some $i \geq 2$, we have connected the pairs $(u_1, v_1), \dotsc, (u_{i-1}, v_{i-1})$ with paths on at most 3 vertices.
If $u_i v_i \in E(L)$, we can connect $u_i$ to $v_i$ with a path of 2 vertices; if $u_i v_i \notin E(L)$ and $u_i$ and $v_i$ have at least $2k-2+i = 2(k-1) + (i-1) + 1$ common neighbors, then they have a common neighbor that is not in $\{ u_1, \dotsc, u_{i-1}, u_{i+1}, \dotsc, u_k, v_1, \dotsc, v_{i-1}, v_{i+1}, \dotsc, v_k \}$, and is not one of the interior vertices of the paths connecting the pairs $(u_1, v_1), \dotsc, (u_{i-1}, v_{i-1})$, so we can use this common neighbor to connect $u_i$ to $v_i$ with a path on at most 3 vertices. Moreover, if $u_i$ and $v_i$ have at least $2k-1+i = 2(k-1) + (i-1) + 1 + 1$ common neighbors, then we can choose this common neighbor to be distinct from $u_0$ as well.
\end{proof}

\begin{corollary}
\label{c:common}
Let $k \geq 3$ be an integer, and let $L$ be a graph with $|L| \geq 2k+1$.
If every pair of non-adjacent vertices in $L$ has at least $3k-2$ common neighbors, then $L$ is $k$-linked, and if every pair of non-adjacent vertices in $L$ has at least $3k-1$ common neighbors, then $L$ is $(2, \dotsc, 2, 1)$-knitted (with $k$ 2s).
\end{corollary}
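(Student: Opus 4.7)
The plan is to derive Corollary~\ref{c:common} as an immediate specialization of Proposition~\ref{p:prop1}. Given an arbitrary choice of $k$ pairs of distinct vertices $(u_1,v_1),\dots,(u_k,v_k)$ from $V(L)$, I would not rearrange them at all: I claim that any labeling already witnesses the hypothesis of Proposition~\ref{p:prop1}. For the first assertion, I need to verify that for each $i \in [k]$, either $u_iv_i \in E(L)$ or the pair $\{u_i,v_i\}$ has at least $2k-2+i$ common neighbors. Since $i \le k$ we have $2k-2+i \le 3k-2$, and by assumption every non-adjacent pair has at least $3k-2$ common neighbors, so the required inequality holds uniformly in $i$.

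For the $(2,\dots,2,1)$-knitted assertion the argument is identical. The hypothesis of Proposition~\ref{p:prop1} in that case demands $2k-1+i$ common neighbors for each non-adjacent pair; since $2k-1+i \le 3k-1$ for $i\le k$ and the assumed bound is $3k-1$, the hypothesis is again satisfied by every labeling. Applying Proposition~\ref{p:prop1} yields the desired conclusion.

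There is no meaningful obstacle: the corollary is a uniform version of the proposition in which the worst-case requirement ($i=k$) absorbs all the others. The only small thing worth flagging is that the hypothesis $|L|\ge 2k+1$ is used implicitly through Proposition~\ref{p:prop1} to guarantee that the common neighbors one selects when routing the short paths can be chosen outside the already-used vertices $\{u_1,v_1,\dots,u_k,v_k\}$ and the previously constructed interior vertices; this is precisely why the numerical thresholds $2k-2+i$ and $2k-1+i$ increase with $i$, and why the uniform threshold must match the largest of them.
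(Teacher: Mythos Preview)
Your proposal is correct and matches the paper's approach exactly: the corollary is stated immediately after Proposition~\ref{p:prop1} with no separate proof, because it is precisely the uniform specialization you describe, taking $i=k$ as the worst case so that $2k-2+i\le 3k-2$ (respectively $2k-1+i\le 3k-1$) for all $i\in[k]$.
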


\begin{corollary}
\label{c:uncommon}
Let $k \geq 3$ be an integer, and let $L$ be a graph with $|L| \geq 2k+1$.
Suppose there is a vertex $v \in V(L)$ such that, for every $x \in V(L) - N[v]$, $v$ and $x$ have at least $2k-1$ (respectively $2k$) common neighbors, and for every non-adjacent pair $x, y \in V(L) - v$, $v$ and $x$ have at least $3k-2$ (respectively $3k-1$) common neighbors.
Then $L$ is $k$-linked (respectively $(2, \dotsc, 2, 1)$-knitted with $k$ 2s).
\end{corollary}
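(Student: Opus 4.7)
\smallskip
\noindent\textbf{Proof proposal.} The plan is to invoke Proposition~\ref{p:prop1} directly, with the only real work being the right ordering of the given $k$ pairs so that the common-neighbor bound required at each position matches the hypothesis available. Given arbitrary distinct vertices $u_0,u_1,v_1,\dots,u_k,v_k\in V(L)$, at most one of the $2k$ pair-endpoints $u_1,v_1,\dots,u_k,v_k$ can equal the special vertex $v$ (if $v$ equals some $u_0$, then no pair has $v$ as an endpoint). The hypothesis distinguishes two kinds of non-adjacent pairs: pairs one of whose endpoints is $v$ (these come with the weaker bound $2k-1$ in the linked case, or $2k$ in the knitted case), and pairs entirely inside $V(L)-v$ (these come with the stronger bound $3k-2$, respectively $3k-1$).

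First I would handle the $k$-linked case. If no $u_i,v_i$ equals $v$, I order the pairs arbitrarily as $(u_1,v_1),\dots,(u_k,v_k)$: any non-adjacent pair is then entirely in $V(L)-v$, so it has at least $3k-2$ common neighbors, and since $3k-2\ge 2k-2+i$ for every $i\in[k]$, Proposition~\ref{p:prop1} applies. If some pair has $v$ as an endpoint, say that pair is $\{v,x\}$, then either $vx\in E(L)$ (in which case we label it as $(u_1,v_1)$ and its adjacency satisfies the condition trivially), or $x\in V(L)-N[v]$, in which case the hypothesis gives $|N(v)\cap N(x)|\ge 2k-1=2k-2+1$, so labeling it as $(u_1,v_1)$ places it at the position with the weakest demand. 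The remaining $k-1$ pairs lie in $V(L)-v$ and, if non-adjacent, each has at least $3k-2\ge 2k-2+i$ common neighbors for every $i\in\{2,\dots,k\}$, so any ordering of them works. Hence the hypothesis of Proposition~\ref{p:prop1} is satisfied and $L$ is $k$-linked.

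For the $(2,\dots,2,1)$-knitted conclusion I would repeat the same argument with the stronger form of Proposition~\ref{p:prop1}, in which position $i$ requires $2k-1+i$ common neighbors for a non-adjacent pair (so that we may additionally avoid the vertex $u_0$). The $v$-involving pair, if non-adjacent, now supplies the needed $2k-1+1=2k$ common neighbors by hypothesis and is again placed at position $1$; every other non-adjacent pair lies in $V(L)-v$ and supplies at least $3k-1=2k-1+k\ge 2k-1+i$ common neighbors for $i\in\{2,\dots,k\}$. The hypothesis of the knitted version of Proposition~\ref{p:prop1} is therefore satisfied, giving the $(2,\dots,2,1)$-knitted conclusion.

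There is no real obstacle here: the only subtlety is noticing that, because at most one endpoint among the $2k$ listed vertices is $v$, the weak bound from the hypothesis needs to be honored at only one position, and the smallest demand $2k-2+1$ (or $2k-1+1$) matches exactly what the hypothesis provides. Assigning this special pair to position $1$ is forced by matching the numerics, and everything else follows automatically from $3k-2\ge 2k-2+k$ (respectively $3k-1\ge 2k-1+k$).
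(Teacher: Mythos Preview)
Your proposal is correct and is exactly the intended derivation: the paper states Corollary~\ref{c:uncommon} without proof because it follows immediately from Proposition~\ref{p:prop1}, and your argument (place the at-most-one pair involving $v$ in position~$1$, where the demand $2k-2+1=2k-1$ matches the weak hypothesis, and use $3k-2=2k-2+k\ge 2k-2+i$ for the remaining pairs) is precisely that direct application.
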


We are now ready to finish a proof for Lemma~\ref{l:23case}.

\begin{proof}[Proof of Lemma~\ref{l:23case}]
Suppose $H$ has no $(2,2,2,2,1)$-knitted subgraph.
This implies that $H$ has no 9-clique, and, by Proposition~\ref{p:prop1}, no subgraph $L$ such that every non-adjacent pair of vertices in $L$ has at least 11 common neighbors in $L$.
Moreover, $H$ itself is not $(2,2,2,2,1)$-knitted, so there are vertices $u_0, u_1, v_1, u_2, v_2, u_3, v_3, u_4, v_4 \in V(H)$ such that $H$ does not have $(u_1, v_1)$-, $(u_2, v_2)$-, $(u_3, v_3)$-, and $(u_4, v_4)$-paths that are disjoint from each other and from $u_0$.

\begin{claim}
$s \geq 3$ for all $i \in [3]$.
\end{claim}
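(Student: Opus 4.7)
The plan is to argue by contradiction, assuming $s \le 2$, so that at least two of $C_1, C_2, C_3, C_4$ are disconnected pairs $\{u_j, v_j\}$. Fix such a disconnected $C_j$ and let $A$, $B$ be the components of $H - (C - \{u_j, v_j\})$ containing $u_j$ and $v_j$ respectively. Because $\delta(H) \ge 21$ while $|C - C_j| \le 1 + 5 + 5 + 2 = 13$, each of $u_j$ and $v_j$ has at least $21 - 13 = 8$ neighbors inside its own component, so $|A|, |B| \ge 9$. In particular, $A - u_j$ and $B - v_j$ are both nonempty, which is what will let me apply Lemma~\ref{l:si} with genuine choices of $a$ and $b$.

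For any $a \in A - u_j$ and $b \in B - v_j$, Lemma~\ref{l:si}(d) gives the lower bound
\[ \sum_{i \ne j} s_i \;\ge\; d(a) + d(b) - (|H| - 2) + t_a + t_b \;\ge\; 21 + 21 - 40 + t_a + t_b \;=\; 2 + t_a + t_b. \]
On the upper-bound side, I will label the second disconnected index $j'$ and use Lemma~\ref{l:si}(a) to get $s_{j'} \le 0$, the fact that $|C_0| = 1$ to get $s_0 \le 1$, and Lemma~\ref{l:si}(c) to get $s_i \le \min\{|C_i|, 6 - |C_i|\}$ for the remaining (path) indices. If both remaining $C_i$ have size $3$, Lemma~\ref{l:si}(e) applied with $(A^*,B^*)=(A,B)$ rules out $s_i = 3$ whenever the two-component-size condition $|H - (A \cup B)| \le \delta(H) - 2$ holds, and a short computation using $|H| \le 42$ and $|C| \le 15$ confirms this: $|H - (A \cup B)| \le |C - C_j| - 2 + |H - C - (\text{inside } A \cup B)| \le 11 + \text{(small slack)} \le 19 \le \delta(H) - 2$. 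Together these produce a sharper upper bound $\sum_{i \ne j} s_i \le 1 + 0 + 2 + 2 = 5$, still not contradicting the lower bound by itself.

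To close the gap, I will vary the choice of $(a, b)$. Picking two distinct pairs $(a, b)$ and $(a', b')$ in $(A - u_j) \times (B - v_j)$ — which exists because $|A|, |B| \ge 9$ — Lemma~\ref{l:si2} severely constrains the simultaneous behaviour of $s_i$ and $s_i'$: for each $i$ the sum $s_i + s_i'$ can exceed $2$ only for $|C_i| \in \{2,3\}$, and when it equals $4$ all four vertices $\{a, a', b, b'\}$ must be complete to $\{u_i, v_i\}$. Averaging or summing Lemma~\ref{l:si}(d) over several choices of $(a, b)$ and applying these structural restrictions will force either a common-neighbor configuration that rebuilds a connected $(u_j, v_j)$-path (contradicting the choice of $C_j$ as a disconnected pair and the minimality of $|C|$), or else violate the degree sum in $H$.

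The main obstacle I foresee is the bookkeeping in the averaging step: the quantities $s_i$ depend on $(a, b)$ in a way that mixes common neighborhoods with co-non-neighborhoods, so obtaining a clean aggregated inequality requires careful handling of the cancellation. I expect to reduce to a few residual cases according to $(|C_1|, |C_2|, |C_3|)$, and in each case either a direct application of the $(x, y)$-reroute operation yields a smaller choice of $C$ (contradicting minimality) or the degree budget fails because $|H| \le 2\delta(H) - 1$.
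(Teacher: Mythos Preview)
Your proposal does not close the argument, and the gap is not just bookkeeping. Two concrete problems:

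\textbf{(1) The lower bound is too weak because you do not choose $a,a'$ (and $b,b'$) to be non-adjacent.} The paper's proof exploits the standing assumption that $H$ has no $(2,2,2,2,1)$-knitted subgraph, hence no $K_9$. Since $v$ is complete to $A_r\cup\{u_r\}$ (here $r\in\{2,3\}$ is the smallest index with $C_r$ disconnected, and $A_r=N(u_r)\setminus C$), the set $A_r$ cannot be a $7$-clique once $|A_r|\ge 7$, so one can pick $a,a'\in A_r$ with $aa'\notin E(H)$, and likewise $b,b'\in B_r$. This forces $t_a,t_b,t_{a'},t_{b'}\ge 1$ in Lemma~\ref{l:si}(d), and combined with $|H|\le 2\delta(H)-1$ (so $2\delta(H)-(|H|-2)\ge 3$) yields $\sum_i s_i\ge 5$ and $\sum_i s_i'\ge 5$. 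Your lower bound of $2+t_a+t_b$ with no control on $t_a,t_b$ cannot reach this, and without it the gap between lower and upper bounds never closes --- your own calculation ($\le 5$ versus $\ge 2$) confirms this.

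\textbf{(2) The endgame is not a reroute or a degree-budget contradiction; it is the construction of a knitted subgraph inside $A_r$.} Once $\sum(s_i+s_i')\ge 10$ and the disconnected indices and $C_0$ absorb at most $2$, Lemma~\ref{l:si2}(a) forces $s_1=s_1'=s_2=s_2'=2$ with $|C_1|,|C_2|\in\{2,3\}$, so in particular $r=3$. Equality throughout Lemma~\ref{l:si}(d) then pins $\Delta(\overline{H}[A_3])=1$: every vertex of $A_3$ has at most one non-neighbor in $A_3$. Recomputing $|A_3|$ with $|C_1|,|C_2|\le 3$ gives $|A_3|\ge 11$, so any two non-adjacent vertices of $A_3\cup\{u_3,v\}$ have at least $11$ common neighbors there, and Corollary~\ref{c:common} makes $A_3\cup\{u_3,v\}$ itself $(2,2,2,2,1)$-knitted --- contradicting the hypothesis on $H$. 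Your sketch instead aims for reroutes or a degree violation, but neither is what emerges from the structure; the point is that the equality case produces a dense enough piece of $H$ directly.

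A secondary issue: you invoke Lemma~\ref{l:si}(e), which requires $H[A]$ and $H[B]$ to be $2$-connected, a hypothesis you never verify (and the paper does not use part (e) in this claim at all --- it works with the neighborhood pair $(A_r,B_r)$, not the component pair $(A,B)$).
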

\begin{proof}
If not, pick $r \in [3]$ such that $r-1$ is the largest index for which $H[C_{r-1}]$ is connected; by definition of $C$, we then have $|C_j| \leq 5$ for $j \leq r-1$ and $|C_j| = 2$ for $j \geq r$.
Let $A_r = N(u_r) - C$ and $B_r = N(v_r) - C$.
We may assume $d_{H - C} (A_r, B_r) > 2$.
We have
	\begin{align*}
	|A_r| &= d(u_r) - |N(u_r) \cap C| \\
	&\geq d(u_r) - |C_0| - \sum_{j=1}^{r-1} |C_j| - |N(u_r) \cap C_r| - \sum_{j=r+1}^4 |C_j| \\
	&\geq \delta(H) - 1 - 5(r-1) - 1 -2(4-r) \\
	&= \delta(H) - 3r -5 \\
	&\geq \delta(H) - 14 \geq 7.
	\end{align*}
Recall that $V(H)=N[v]$. Since no vertex in $A_r \cup \{ u_r \}$ has a neighbor in $B_r$, $v \notin A_r$, so $v$ is complete to $A_r \cup \{ u_r \}$.
Since $H$ has no 9-clique, this implies that $A_r$ cannot be a 7-clique, so $\Delta(\overline{H}[A]) \geq 1$.
Likewise, $\Delta(\overline{H}[B]) \geq 1$.
Choose $a, a' \in A_r$ and $b, b' \in B_r$ such that $aa', bb' \notin E(H)$.
Defining $s_i$ and $s_i'$ as in Lemma~\ref{l:si}, by Lemma~\ref{l:si}(d) we have \[ \sum_{i=0}^4 s_i = |N(a) \cap N(b) \cap C| - |C - (N[a] \cup N[b])| \geq 3 + 1 + 1 = 5 \] and likewise $\sum_{i=0}^4 s_i' \geq5$, thus \[ \sum_{i=0}^4 (s_i + s_i') \geq 10. \]
By Lemma~\ref{l:si}(a), we have $s_4 + s_4' \leq 0$, and by definition of $s_i$ and $s_i'$, we must have $s_0 + s_0' \leq 2$.
Because $C_3$ is disconnected, we have $s_3 + s_3' \leq 0$ by Lemma~\ref{l:si}(a). 
Thus, we have \[ \sum_{i=1}^2 (s_i + s_i') \geq 8. \]
By Lemma~\ref{l:si2}(a), we must then have $s_1 = s_1' = s_2 = s_2' = 2$ and $|C_1|, |C_2| \in \{ 2, 3 \}$, so that $r = 3$.

Since we have equality here, we must have $\Delta(\overline{H}[A_3]) = \Delta(\overline{H}[B_3]) = 1$.
It follows that
	\begin{align*}
	|A_3| &\geq d(u_r) - |C_0| - |C_1| - |C_2| - |N(u_r) \cap C_3| - |C_4| \\
	&\geq \delta(H) - (1 + 3 + 3 + 1 + 2) \\
	&\geq 21 - 10 = 11.
	\end{align*}
Then $|A_3 \cup \{ u_3 \}| \geq 12$, so every pair of non-adjacent vertices in $A_3 \cup \{ u_3 \}$ has at least 10 common neighbors in $A_3 \cup \{ u_3 \}$, hence every pair of non-adjacent vertices in $A_3 \cup \{ u_3, v \}$ has at least 11 common neighbors in $A_3 \cup \{ u_3, v \}$.
Thus, by Proposition~\ref{p:prop1}, $A_3$ is $(2,2,2,2,1)$-knitted.
\end{proof}

\begin{claim}
$t=4$.
\end{claim}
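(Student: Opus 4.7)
Given the structural conclusion of the previous claim (which establishes that $C_1, C_2, C_3$ are each a connected induced $(u_i, v_i)$-path in $H$), the plan is to prove that the remaining pair $C_4 = \{u_4, v_4\}$ must be independent in $H$; reading the index $t$ as the smallest index for which $H[C_i]$ is disconnected (the same role played by $r$ in the proof of the previous claim), this gives $t = 4$. I would argue by contradiction: suppose $u_4 v_4 \in E(H)$. Then $C_4$, together with the single edge $u_4 v_4$, is itself a connected $(u_4, v_4)$-path of length $1$. Combined with the connected induced paths $C_1, C_2, C_3$ from the previous claim, and the fact that $C_0, C_1, \ldots, C_4$ are pairwise vertex-disjoint by construction, this would yield four pairwise vertex-disjoint $(u_i, v_i)$-paths in $H - u_0$.

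That configuration is precisely a certificate that $H$ is $(2,2,2,2,1)$-knitted on the distinguished nine-tuple $u_0, u_1, v_1, \ldots, u_4, v_4$, contradicting the assumption made at the start of the proof of Lemma~\ref{l:23case} that no such linkage exists on this nine-tuple. Hence $u_4 v_4 \notin E(H)$, so $C_4$ is a non-edge and $t = 4$.

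The content here is not technically deep: once the previous claim is in hand, the present one is a short structural observation pinning down the configuration as ``three connected induced short paths plus one non-edge.'' The significance of the claim lies less in its proof than in what it unlocks afterward, namely access to the sets $A_4 = N(u_4) - C$ and $B_4 = N(v_4) - C$ (which are well-defined as ``sides'' precisely because $u_4 v_4$ is a non-edge and $C_4$ is disconnected), enabling the subsequent application of Lemma~\ref{l:si} and Lemma~\ref{l:si2} with $j = 4$ in analogy to how $A_3, B_3$ were used in the previous claim's proof.
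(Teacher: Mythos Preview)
You have misread the claim. In the paper's notation (the variable is called $s$ in the setup preceding the two claims; the ``$t$'' here is an inconsistency with that setup, matching instead the convention of the appendices), this integer counts how many of the $C_i$ induce connected subgraphs. The first claim showed $s \geq 3$; the present claim asserts $s = 4$, i.e., that $u_4$ and $v_4$ can be joined by a path in $H - (C_0 \cup C_1 \cup C_2 \cup C_3)$. Since $C_1, C_2, C_3$ are already connected paths, this produces four disjoint $(u_i, v_i)$-paths avoiding $u_0$, contradicting the choice of the nine-tuple. This claim is therefore the \emph{terminal} step of the proof of Lemma~\ref{l:23case}---nothing follows it.

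Your argument establishes the opposite direction: that $u_4 v_4 \notin E(H)$. That observation is correct (and your reasoning for it is fine), but it is not what is being claimed. Your reading of $t$ as ``the smallest index for which $H[C_i]$ is disconnected'' would make the claim a near-tautology once the previous claim is in hand, and would leave the proof of the lemma unfinished. Your remark that the claim ``unlocks'' work with $A_4, B_4$ afterward is also mistaken: there is no afterward.

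The actual content is substantial. The paper assumes for contradiction that $u_4$ and $v_4$ lie in distinct components $A, B$ of $H - (C_0 \cup C_1 \cup C_2 \cup C_3)$. It shows $|A|, |B| \geq 12$, then argues $\Delta(\overline{H}[A - u_4]) \geq 2$ (otherwise $H[A \cup \{v\}]$ would already be $(2,2,2,2,1)$-knitted by the common-neighbor criterion), pushes to $|A|,|B| \geq 14$, and then applies Lemma~\ref{l:si}(d),(e) together with the bound $|H - (A \cup B)| \leq \delta(H) - 2$ to force $\sum_{i=0}^4 s_i = 7$ exactly. This pins $\Delta(\overline{H}[A]) = 2$, so every non-adjacent pair in $A$ has at least $|A| - 4 \geq 11$ common neighbors, and $H[A]$ is $(2,2,2,2,1)$-knitted---the final contradiction.
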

\begin{proof}
Let $A_4 = N(u_4) - C$ and $B_4 = N(v_4) - C$.
In $H - (C - \{ u_4, v_4 \})$, let $A$ be the component containing $u_4$ and let $B$ be the component containing $v_4$, so that $A_4 \subseteq V(A)$ and $B_4 \subseteq V(B)$; if there is no $(u_4, v_4)$-path in $H - (C - \{ u_4, v_4 \})$, then it must be the case that $A \neq B$.
By Claim 5.7, we have \[ |A_4| \geq d(u_4) \geq \delta(H) - (1 + |C_1| + |C_2| + |C_3| + 1) \geq 21 - 17 = 4. \]
For any $a \in A_4 - u_4$, we have $N[a] \subseteq A \cup C$, so \[ |A| \geq d(a) + 1 - |N(a) \cap (C - A)| \geq \delta(H) + 1 - (1 + 3 + 3 + 3 + 0) = \delta(H) - 9 \geq 12. \]
If $\Delta(\overline{H}[A - u_4]) \leq 1$, then every pair of nonadjacent vertices in $A$ would have at least 11 common neighbors in $A \cup \{ v \}$, so $H[A]$ would be $(2,2,2,2,1)$-knitted, contrary to our choice of $H$.
Thus $\Delta(\overline{H}[A - u_4]) \geq 2$ and likewise $\Delta(\overline{H}[B - v_4]) \geq 2$.
Let $a \in A$ and $b \in B$ have maximum degree in $\overline{H}[A - u_4]$ and $\overline{H}[B - v_4]$, respectively; let $a' \in A - \{ a, u_4 \}$ have at least 1 non-neighbor in $A - u_4$, and let $b' \in B - \{ b, v_4 \}$ have at least 1 non-neighbor in $B - v_4$. Denote $a_1,a_2$ be two non-neighbors of $a$ in $A$. Then we have \[ |A| -| \{a,a_1,a_2\}| \geq d(a) - |N(a) \cap (C - A)| \geq \delta(H) - 10, \] so that $|A| \geq \delta(H) - 7 \geq 14$ and likewise $|B| \geq 14$.
Then, by Lemma~\ref{l:si}(d), we have $\sum_{i=0}^4 s_i \geq 7$ and $\sum_{i=0}^4 s_i' \geq 5$.
Conversely, since we must have \[ 5 + \Delta(\overline{H}[A - u_4]) \leq 3 + \Delta(\overline{H}[A - u_4]) + \Delta(\overline{H}[B - v_4]) \leq \sum_{i=0}^4 s_i \leq 1 + 3 + 3 + 3 + 0 = 10, \] we have $\Delta(\overline{H}[A - u_4]) \leq 5$; this implies that every pair of vertices in $A - u_4$ has at least $|A - u_4| - 12 \geq 3$ common neighbors, so $A$ must be 2-connected. 
We have \[ |H - (A \cup B)| = |H| - |A| - |B| \leq 2 \delta(H) - 1 - \big( \delta(H) - 9 \big) - \big( \delta(H) - 9 \big) = 17 \leq \delta(H) - 2, \] so, by Lemma~\ref{l:si}(e), we have $s_i \leq 2$ for $i \in [3]$. Since $s_0 \leq 1$ and $s_4 \leq 0$, we must have $\sum_{i=0}^4 s_i = 7$ exactly, which, in turn, implies that $\Delta(\overline{H}[A]) = \Delta(\overline{H}[B]) = 2$.
But then every pair of non-adjacent vertices in $A$ has at least $|A| - 4 \geq 11$ common neighbors in $A$, so $H[A]$ is $(2,2,2,2,1)$-knitted, a contradiction.
\end{proof}
\end{proof}

\section{Concluding Remarks}\label{remark}

In~\cite{M68}, Mader utilized a result on rooted $K_4$ minors after Theorem~\ref{thm:MaderS-3} to finish the proof of Theorem~\ref{thm:7Conn}.
It is an open question whether an analogous result can be found for rooted $K_5$ minors.

\begin{question}\label{Q:RootedK5}
Let $\{v_1, \dots, v_5\} \subseteq V(G)$ such that $\alpha(\{v_1, \dots, v_5\}) = 2$.
For $i \in \{1, \dots, 5\}$, let $V_i \subseteq V(G)$ be disjoint subsets with $v_i \in V_i$.
Assume for all $i \ne j$ that there exists a $v_i, v_j$-path consisting only of vertices from $V_i \cup V_j$.
Do there exist disjoint subsets $V_1', \dots, V_5' \subseteq V(G)$ such that $v_i \in V_i'$, $G[V_i']$ is connected, and there exists at least one $V_i', V_j'$-edge for all $i \ne j$?
\end{question}

If Question~\ref{Q:RootedK5} can be answered in the affirmative, then Theorem~\ref{main2} could be improved to say that any $k$-contraction-critical graph is $8$-connected for $k \ge 11$.
We would obtain $k \ge 11$ here, as opposed to a more desirable $k \ge 8$, since Theorem~\ref{main} requires a $(k + 4)$-contraction-critical graph.
This represents three additional colors when compared to Theorem~\ref{thm:MaderS-3}, and these colors are reflected in the bound.
Answering Question~\ref{Q:RootedK5}, however, seems hard.

\newpage

\section{Appendix}

\begin{lemma}
Let $H$ be a graph, $v \in V(H)$ such that $H = N[v]$.
Suppose $\delta(H) \geq 15$ and $|H| = n \leq \min \{ 2 \delta(H) - 1, 30 \}$.
Then $H$ has a $4$-linked subgraph.
\end{lemma}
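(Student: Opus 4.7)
The plan is to mimic the proof of Lemma~\ref{l:23case}, with the numerics re-tuned to $\delta(H)\ge 15$, $|H|\le 30$, and the target structure being $4$-linkedness rather than $(2,2,2,2,1)$-knittedness. Suppose for a contradiction that $H$ has no $4$-linked subgraph. By Corollary~\ref{c:common} applied with $k=4$, $H$ has no subgraph $L$ on at least $9$ vertices in which every pair of non-adjacent vertices of $L$ shares at least $10$ common neighbours in $L$; in particular $H$ has no $K_8$. Since $H$ itself is not $4$-linked, fix $u_1,v_1,\dots,u_4,v_4 \in V(H)$ admitting no system of four disjoint $(u_i,v_i)$-paths, and define $C = C_1 \cup \dots \cup C_4$ exactly as in the setup preceding Lemma~\ref{l:si}, but dropping the vertex $u_0$ and the set $C_0$ (since $4$-linkedness does not demand an additional isolated vertex). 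Rearrange so that $C_1,\dots,C_s$ are connected induced paths with $|C_1|\le\dots\le |C_s|\le 5$ and $C_{s+1},\dots,C_4$ are disconnected pairs.

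The argument then decomposes into two claims paralleling the two claims inside the proof of Lemma~\ref{l:23case}. First I would establish that $s \ge 3$, i.e.\ at most one $C_i$ is disconnected. Assuming $s \le 2$, pick the smallest $r \in \{1,2,3\}$ with $C_r$ disconnected, and set $A_r = N(u_r) - C$, $B_r = N(v_r) - C$; counting yields $|A_r| \ge \delta(H) - 3r - 4$, and sharper counting (using $|C_1|,|C_2|\le 3$ when $r=3$) gives $|A_r| \ge \delta(H)-9 \ge 6$ in the $r=3$ case. Since $H = N[v]$ and no edge joins $A_r \cup \{u_r\}$ to $B_r \cup \{v_r\}$, $v$ is complete to $A_r \cup \{u_r\}$, so the no-$K_8$ assumption forces $\Delta(\overline{H}[A_r]) \ge 1$ (and similarly for $B_r$). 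Picking non-adjacent pairs $a,a' \in A_r$ and $b,b' \in B_r$, applying Lemma~\ref{l:si}(d) (which here yields $\sum_i s_i \ge 2 + t_a + t_b$) and combining Lemma~\ref{l:si}(a),(c) with Lemma~\ref{l:si2}(a),(c) on the connected $C_j$'s should force $r=3$, $s_1+s_1'=s_2+s_2'=4$, and $\Delta(\overline{H}[A_3])=\Delta(\overline{H}[B_3])=1$. The structural information of Lemma~\ref{l:si2}(c) (that each of $a,a',b,b'$ is complete to $\{u_1,v_1\}\cup\{u_2,v_2\}$) then produces, after adjoining $u_3$ and $v$, a subgraph on at least $10$ vertices in which every non-adjacent pair shares at least $10$ common neighbours, so Corollary~\ref{c:common} yields a $4$-linked subgraph, a contradiction.

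Second, under $s \ge 3$ I would rule out the case $C_4 = \{u_4,v_4\}$; with the first claim this then forces all $C_i$ connected, contradicting the non-existence of disjoint $(u_i,v_i)$-paths. Let $A,B$ be the components of $H - (C - \{u_4,v_4\})$ containing $u_4,v_4$. Bounding $|A|,|B| \ge \delta(H) - 10$ and $|H - (A\cup B)| \le 2\delta(H) - 1 - 2(\delta(H) - 10) \le \delta(H) - 2$ (using the minimality of $C$ to pin down $|C_1|,|C_2|,|C_3|$) lets me apply Lemma~\ref{l:si}(e). If $\Delta(\overline{H}[A - u_4]) \le 1$, then every non-adjacent pair of $A$ has at least $10$ common neighbours in $A \cup \{v\}$, so $H[A \cup \{v\}]$ contains a $4$-linked subgraph by Corollary~\ref{c:common}, a contradiction. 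Otherwise pick non-adjacent pairs $a,a' \in A - u_4$, $b,b' \in B - v_4$ and combine Lemma~\ref{l:si}(d),(e) with Lemma~\ref{l:si2}(a),(c) to squeeze $\Delta(\overline{H}[A])$ and $\Delta(\overline{H}[B])$ low enough that Corollary~\ref{c:common} applied to $H[A \cup \{v\}]$ again produces a $4$-linked subgraph, a contradiction.

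The hardest part is the numerics: the slack in Lemma~\ref{l:si}(d) drops from $3$ (in the $p=42$ proof) to $2$ here, and the bound $|H - (A \cup B)| \le \delta(H) - 2$ required for Lemma~\ref{l:si}(e) is only barely satisfied, so the equality cases in both claims must be tracked delicately. Exploiting Lemma~\ref{l:si2}(b),(c) for structural information about which vertices of $\{a,a',b,b'\}$ are complete to which $C_i$, together with strategic relabelling of the pairs $(u_i,v_i)$ to minimise $|C_i|$, should provide the extra leverage needed to force a subgraph satisfying the $\ge 10$-common-neighbour hypothesis of Corollary~\ref{c:common}.
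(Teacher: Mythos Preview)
Your framework is right in spirit, but the numerics you sketch do not close. The paper's proof does \emph{not} simply re-tune Lemma~\ref{l:23case}; it changes the setup in an essential way, replacing the uniform bound $|C_i|\le 5$ by staggered bounds $|C_1|\le 3$, $|C_2|\le 5$, $|C_3|\le 7$, and after establishing $t\ge 3$ it launches a long case analysis (Claims~\ref{claim:8neighbors} and following) rather than the short Lemma~\ref{l:si}(e) argument you propose. Concretely, three of your steps break: (i) in your first claim, the assertion ``$|C_1|,|C_2|\le 3$ when $r=3$'' is unjustified under the uniform $\le 5$ bound, and without it $|A_3|\ge\delta(H)-13=2$ only, too small to force a non-edge via the no-$K_8$ hypothesis; (ii) in your second claim, with $|C_1|+|C_2|+|C_3|$ as large as $15$ one gets $|A_4|\ge\delta(H)-16<0$, so $A_4$ may be empty --- the paper handles this possibility with a separate nontrivial claim; (iii) even granting $A_4,B_4\neq\varnothing$, the best bound you get is $|A|,|B|\ge\delta(H)-8$, hence $|H-(A\cup B)|\le 2\delta(H)-1-2(\delta(H)-8)=15$, whereas Lemma~\ref{l:si}(e) requires $|H-(A\cup B)|\le\delta(H)-2=13$, so the key inequality $s_i\le 2$ is unavailable.

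The reason the $p=42$ proof is short while the $p=30$ proof occupies the entire appendix is precisely that the slack evaporates: with $\delta(H)\ge 15$ the inequality $\sum s_i\ge 3+t_a+t_b$ is not strong enough to pin down $\Delta(\overline{H}[A])$ directly, and the paper compensates with the staggered path lengths (so that second-neighbourhood sets $A_3'$ can be used in the $t\ge 3$ claim), a separate argument ruling out $A_4=\varnothing$, a delicate Claim~\ref{claim:8neighbors} bounding $|N(a)\cap C'|\le 7$, and finally an explicit construction of a $4$-linked subgraph on $A\cup\{v,x_1,y,x_2\}$ via Proposition~\ref{p:prop1}. Your outline does not anticipate any of this structure.
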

\begin{proof}
Suppose $H$ has no 4-linked subgraph.
This implies that $H$ has no 8-clique, and, by Proposition~\ref{p:prop1}, no subgraph $L$ such that $|L| \geq 8$ and every non-adjacent pair of vertices in $L$ has at least 10 common neighbors in $L$.
Since $H$ itself is not 4-linked, we will define $u_1, v_1, \dotsc, u_4, v_4$ as in the proof of Lemma~\ref{l:23case}, and define $C = C_1 \cup C_2 \cup C_3 \cup C_4 \subseteq V(H)$ as follows:
\begin{enumerate}[(i)]
    \item If the graph $H \setminus \{ u_1, v_1, u_2, v_2, u_3, v_3, u_4, v_4 \}$ has a $(u_1, v_1)$-path on at most 3 vertices, then $C_1$ is the vertex set of this path. Otherwise, $C_1 = \{ u_1, v_1 \}$.
    \item If the graph $H \setminus (C_1 \cup \{ u_2, v_2, u_3, v_3, u_4, v_4 \})$ has a $(u_2, v_2)$-path on at most 5 vertices, then $C_2$ is the vertex set of this path. Otherwise, $C_2 = \{ u_2, v_2 \}$.
    \item If the graph $H \setminus (C_1 \cup C_2 \cup \{ u_3, v_3, u_4, v_4 \})$ has a $(u_3, v_3)$-path on at most 7 vertices, then $C_3$ is the vertex set of this path. Otherwise, $C_3 = \{ u_3, v_3 \}$.
	\item $C_4 = \{ u_4, v_4 \}$.
	\item Subject to (i)-(iv), rearranging the pairs $(u_1, v_1), \dotsc, (u_4, v_4)$ if necessary, as many of the $C_i$ as possible induce connected subgraphs of $H$.
	\item Subject to (i)-(v), rearranging the pairs $(u_1, v_1), \dotsc, (u_4, v_4)$ if necessary, $C$ has as few vertices as possible.
\end{enumerate}
We may again assume, rearranging if necessary, that there is $t \in \{ 0, 1, 2, 3, 4 \}$ such that $H[C_i]$ is connected for all $i \leq t$ and that $|C_i| \leq |C_j|$ whenever $i < j \leq t$.
Lemma~\ref{l:si} still holds in this case if we omit $s_0$.
Since no $u_i$ and no $v_i$ is complete to every other vertex in $C$ (since $u_i v_i \notin E(C)$ for each $i$), we have $u_1 v v_1$ as an option for $C_1$, so that $t \geq 1$ and $|C_1| \leq 3$.

\begin{claim}
$t \geq 2$. 
\end{claim}
\begin{proof}
If not, we define $A_2$ as in Lemma~\ref{l:si} and get \[ |A_2| \geq d(u_2) - |C_1| - 1 - |C_3| - |C_4| \geq \delta(H) - 8 \geq 7. \]
Since $u_2$ is complete to $A_2$ and $H$ has no 8-clique, $A_2$ cannot be a 7-clique, so $\Delta(\overline{H}[A_2]) \geq 1$; likewise, $\Delta(\overline{H}[B_2]) \geq 1$. 
Then, if we take $a \in A_2$ with a non-neighbor in $A_2$ and $b \in B_2$ with a non-neighbor in $B_2$, by Lemma~\ref{l:si}(d), we have \[ \sum_{i=1}^4 s_i \geq 5. \]
Part (a) of this lemma gives us $s_3 + s_4 \leq 0$.
Since $C_2$ is disconnected, we have $s_2 \leq 0$, but then $s_1 \geq 5$, contrary to Lemma~\ref{l:si}(c).
\end{proof}

\begin{claim}
$t \geq 3$. 
\end{claim}
\begin{proof}
If not, we define $A_3$ in Lemma~\ref{l:si} and get \[ |A_3| \geq d(u_3) - (|C_1| + |C_2| + 1 + |C_4|) \geq \delta(H) - (3 + 5 + 1 + 2) \geq 4. \]
Then $A_3$ is nonempty; let $A_3' = \{ x \in V(H) - (A_3 \cup C) : N(x) \cap A_3 \neq \varnothing \}$.
We define $B_3$ and $B_3'$ similarly; note that, in $H - C$, the distance from $A_3 \cup A_3'$ to $B_3 \cup B_3'$ is at least 2, otherwise we get a $(u_3, v_3)$-path of length at most 7.
For any $a \in A_3$, \[ |A_3 \cup A_3'| \geq d(a) + 1 - |N(a) \cap [C - (A_3 \cup A_3')]| \geq \delta(H) + 1 - (3 + 3 + 1 + 1) = \delta(H) - 7 \geq 8. \]
Since $H$ has no 8-clique, $\Delta(\overline{H}[A_3 \cup A_3']) \geq 1$, and likewise $\Delta(\overline{H}[B_3 \cup B_3']) \geq 1$.
Taking $a \in A_3 \cup A_3'$ that is not complete to $(A_3 \cup A_3') - a$ and $b \in B_3 \cup B_3'$ that is not complete to $(B_3 \cup B_3') - b$, we have \[ \sum_{i=1}^4 s_i \geq 5. \]
Since $C_4$ is disconnected, $s_4 \leq 0$ by part (a) or (c) of Lemma~\ref{l:si}, and, since $C_3$ is disconnected, 
$s_3 \leq 0$ as well.
Then $s_1 + s_2 \geq 5$, so either $s_1$ or $s_2$ is at least 3; without loss of generality, $s_1 \geq 3$.
By Lemma~\ref{l:si}(c), $s_1 = 3$ and $|C_1| = 3$.
We claim that the middle vertex of $C_1$, say $x$, can have no neighbor in $(A_3 \cup A_3' \cup B_3 \cup B_3') - \{ a, b \}$.
Otherwise, suppose it has such a neighbor, say $a' \in A_3'$.
Note that there must be a path from $a'$ to $u_3$ in $(A_3 \cup A_3') - a$; if $a' \in A_3$, this is immediate.
If $a' \in A_3'$ and its only neighbor in $A_3$ is $a$, then there are 3 vertices in $A_3 \cup A_3'$ that are not neighbors of $a'$, so, applying Lemma~\ref{l:si}(d) to $a'$ and $b$, we get \[ \sum_{i=1}^4 s_i \geq 7, \] contrary to the fact that $\max\{ s_1, s_2 \} \leq 3$ and $\max \{ s_3, s_4 \} \leq 0$.
We could then replace $C_1$ with $u_1 a v_1$ and replace $C_3$ with a path of length at most 2 from $u_3$ to $a'$, then $x$, then a path of length at most 2 from $b$ to $v_3$ in $B$, contrary to the choice of $C$.
Thus $x$ has at most 1 neighbor in each of $A_3 \cup A_3'$ and $B_3 \cup B_3'$, so we get \[ d(x) \leq |H| - (|A_3 \cup A_3'| - 1) - (|B_3 \cup B_3'| - 1) - |\{ x \}| \leq \big( 2\delta(H) - 1 \big) - 2 \big(\delta(H) - 8 \big) - 1 = 14 < \delta(H), \] a contradiction. 
\end{proof}

Now we may assume $t = 3$ and define $A_4$ and $A$ as in Lemma~\ref{l:si}.

\begin{claim}
$A_4 \neq \varnothing$ and $B_4 \neq \varnothing$.
\end{claim}
\begin{proof}
Assume without loss of generality that $A_4 = \varnothing$.
We have \[ |A_4| \geq d(u_4) - |C - \{ u_4, v_4 \}| \geq \delta(H) - (3 + 5 + 7) \geq 0, \] so, if $A_4$ is empty (in which case $A = \{ u_3 \}$), we must have $|C_1| = 3$, $|C_2| = 5$, $|C_3| = 7$, and $N(u_4) = C_1 \cup C_2 \cup C_3$.
In that case, $v_4$ is not adjacent to any internal vertex of $C_2$ or $C_3$, as that vertex would then be a common neighbor of $u_4$ and $v_4$; if, say, $x \in N(u_4) \cap N(v_4)$ was an internal vertex of $C_2$, we could replace $C_4$ with $u_4 x v_4$ to get a path shorter than $C_2$, contrary to the minimality of $C$.
Then \[ |B_4| \geq d(v_4) - |N(v_4) \cap C| \geq \delta(H) - (3 + 2 + 2 + 0) \geq 8. \]
Let $B_4'$ be the set of vertices in $H - (C \cup B_4)$ that have a neighbor in $B_4$, so that every vertex in $B_4$ has all of its neighbors in $C \cup B_4 \cup B_4'$.
Note that no vertex in $B_4$ can be adjacent to an interior vertex of $C_2$ or $C_3$, as that would give us a $(u_4, v_4)$-path on at most 4 vertices, a contradiction; moreover, no vertex in $B_4$ is adjacent to both ends of $C_2$ or both ends of $C_3$, as that would allow us to replace $C_2$ or $C_3$ with a path on 3 vertices.
We then have, for any $b \in B_4$, \[ |B_4 \cup B_4'| \geq |N[b] - C| \geq \delta(H) + 1 - (3 + 1 + 1 + 1) = \delta(H) - 5 \geq 10. \]
Now consider an interior vertex $x$ of $C_3$.
We have established that $x$ is not adjacent to $v_4$ and has no neighbor in $B_4$; similarly, $x$ has no neighbor in $B_4'$, otherwise we would have a $(u_4, v_4)$-path on at most 5 vertices.
Moreover, $x$ has no neighbor in $C_3$ outside of the two vertices that are consecutive to it, otherwise we could replace $C_3$ with a shorter path.
Thus \[ |N(x) - (C \cup B_4 \cup B_4')| \geq \delta(H) - |N(x) \cap C| \geq \delta(H) - (3 + 5 + 2 + 1) = \delta(H) - 11 \geq 4. \]
But then \[ |H| \geq |C| + |B_4 \cup B_4'| + |N(x) - (C \cup B_4 \cup B_4')| \geq 17 + 10 + 4 = 31, \] contrary to the fact that $|H| \leq 30$.
\end{proof}

Let $C' = C_1 \cup C_2 \cup C_3$.
For any $a \in A_4$, we have \[ |A| \geq d(a) + 1 - |N(a) \cap C'| \geq \delta(H) + 1 - (3 + 3 + 3) = \delta(H) - 8 \geq 7. \]
Since there is no edge between $A$ and $B$, we have $v \notin A \cup B$, so $v$ is complete to $A \cup B$; more specifically, since every $(A, B)$-path must pass through $C'$, we must have $v \in C'$.
Even more specifically, since no two non-consecutive vertices on any $C_i$ can be adjacent by the minimality of $|C|$, if $v \in C_i$, then either $|C_i| = 2$ and $v$ is one of its endpoints or $|C_i| = 3$ and $v$ is its middle vertex, otherwise $C_i$ has a vertex that is not consecutive to and thus not adjacent to $v$, contrary to the definition of $v$.
Since $v$ is complete to $A$, $A$ can have no 7-clique.
So, if we let $A_0 = \{ a \in A : A - N[a] \neq \varnothing \}$, we must have $|A_0| \geq 2$, so that, for any $a \in A_0 - v_4$, we have \[ |A| \geq d(a) + 1 - |N(a) \cap C'| + 1 \geq 8. \]
If $|A| = 8$, then every vertex in $A_0$ has at most $|A| - 2 = 6$ neighbors in $A$, hence it has at least $\delta(H) - 6 \geq 9$ neighbors in $C'$.
This implies that each of $C_1, C_2, C_3$ has at least 3 vertices.
By minimality of $C$, any two nonconsecutive vertices in any of $C_1, C_2, C_3$ must be non-adjacent by minimality of $C$, so the vertex $v$ must be the central vertex of some $C_i$ such that $|C_i| = 3$.
Let $a, a' \in A_0 - u_4$.
Then $\{ a, a' \}$ is complete to $C_i$.
For any $b \in B$, since $v$ is adjacent to $b$, we can perform a $(v, a)$-reroute of $C_i$ and $C_4$, 
contrary to the minimality of $C$.
Thus $|A| \geq 9$, and, by symmetry, $|B| \geq 9$.

\begin{claim}
\label{claim:8neighbors}
Every vertex in $A - u_4$ and in $B - v_4$ has at most 7 neighbors in $C'$.
\end{claim}
\begin{proof}
We will prove that every vertex in $A - u_4$ has at most 7 neighbors in $C'$; the result for $B - v_4$ will follow by symmetry.

If there is some $a^* \in A - u_4$ with 9 neighbors in $C'$, then we must have $|C_i| \geq 3$ for every $i \in [3]$.
In that case, the vertex $v$ must be the central vertex of some $C_i$ such that $|C_i| = 3$.
But then we can perform a $(v, a^*)$-reroute of $C_i$ and $C_4$, contrary to the choice of $C$.
Thus every vertex in $A - u_4$ has at most 8 neighbors in $C'$; suppose some vertex in $A - u_4$ has exactly 8 neighbors in $C'$.
Note that taking every vertex in $A - A_0$, together with 1 vertex from $A_0$ and the vertex $v$, gives us a clique of order $|A - A_0| + 2$.
Since $H$ has no 8-clique, we have $|A - A_0| \leq 5$ and so $|A_0| \geq 4$.
For any $a, a' \in A_0 - u_4$ and any $b, b' \in B_0 - v_4$ (where $B_0 = \{ b \in B : B - N[b] \neq \varnothing \}$), by Lemma~\ref{l:si}(d), we have \[ \sum_{i=1}^4 (s_i + s_i') \geq 2(3 + 1 + 1) \geq 10, \] and by Lemma~\ref{l:si}(a), we have $s_4 + s_4' \leq 0$ and so \[ \sum_{i=1}^3 (s_i + s_i') \geq 10. \]
Label the indices in $[3]$ as $i, j, k$ so that $s_i + s_i' \geq s_j + s_j' \geq s_k + s_k'$.
Then $s_i + s_i' \geq \left\lceil \frac{10}{3} \right\rceil = 4$, so, by Lemma~\ref{l:si2}(a), $s_i + s_i' = 4$.
We then have $s_j + s_j' + s_k + s_k' \geq 6$, so $s_j + s_j' \geq 3$, implying that $s_k + s_k' \in \{ 2, 3, 4 \}$.
This implies that $|C_i|, |C_j| \in \{ 2, 3 \}$ and $|C_k| \in \{ 2, 3, 4, 5 \}$.

Note that, by Lemma~\ref{l:si2}(a), we must have \[ \sum_{i=1}^3 (s_i + s_i') \leq 12, \] which, by Lemma~\ref{l:si}(d), implies that $|A - N[a]| \leq 3$ for every $a \in A - u_4$.
We have $|N[u_4] \cap A| \geq d_H (u_4) + 1 - |C'| \geq 16 - 11 = 5$.
That is, $|N[u_4] \cap A| - |A - N[a]| \geq 2$, so either $a$ is adjacent to $u_4$ or $a$ and $u_4$ have at least 2 common neighbors.
This implies that $H[A]$ is 2-connected, and, by symmetry, $H[B]$ is 2-connected.

Suppose $|C_k| = 5$.
Then we must have $s_k = 1$ for every choice of $a \in A_0 - u_4$ and $b \in B_0 - u_4$.
This happens if and only if every vertex in $A_0 - u_4$ and in $B_0 - v_4$ has exactly 3 neighbors in $C_k$.
If we label the vertices of $C_k$ as $u_k x y z v_k$, then this implies that $y$ is complete to $(A_0 \cup B_0) - \{ u_4, v_4 \}$.
This, in turn, implies that no vertex in $(A_0 \cup B_0) - \{ u_4, v_4 \}$ is complete to $\{ x, y, z \}$; otherwise, if (for example) $a \in A_0 - u_4$ is complete to $\{ x, y, z \}$, then we can perform a $(y, a)$-reroute of $C_k$ and $C_4$. 
Then every vertex in $(A_0 \cup B_0) - \{ u_4, v_4 \}$ is complete to either $\{ u_k, x, y \}$ or $\{ y, z, v_k \}$.
By the pigeonhole principle, we may assume at least 2 vertices in $A_0 - u_4$ are complete to $\{ u_k, x, y \}$.
Then no vertex from $B$ can be adjacent to $x$, otherwise, if $a, a' \in A_0 - u_4$ are complete to $\{ u_k, x, y \}$, we can perform an $(x, a)$-reroute of $C_k$ and $C_4$.
Thus every vertex in $B_0 - v_4$ is complete to $\{ y, z, v_k \}$.
Note that, since $s_k = 1$ for all $a \in A_0 - u_4$ and $b \in B_0 - v_4$, we have \[ 10 \leq \sum_{i=1}^3 (s_i + s_i') \leq 10 \] for every choice of $a, a', b, b'$, so we have equality, which implies that $|B - N[b]| = 1$ for every $b \in B_0 - v_4$.
We claim that the graph $H[B \cup \{ v, y, z, v_k \} - v_4]$ is 4-linked (note that none of $y, z, v_k$ is adjacent to $u_k$, so none of these vertices is $v$).
The pairs of non-adjacent vertices in this graph are the pairs of non-adjacent vertices in $B_0 - v_4$, the pair $\{ y, v_k \}$, and possibly some pairs with one end in $\{ y, z, v_k \}$ and the other end in $B - B_0$.
The common neighbors of any pair of vertices in $B_0 - v_4$ include every other vertex in $B - v_4$ as well as $\{ v, y, z, v_k \}$, for a total of $4 + |B| - 3 = |B| + 1 \geq 10$ common neighbors.
The common neighbors of $y$ and $v_k$ include $v$ and $z$ as well as every vertex in $B_0 - v_4$, for a total of $|B_0 - v_4| + 2$ common neighbors.
The common neighbors of a vertex in $\{ y, z, v_k \}$ and a vertex in $B - B_0$ include every vertex in $B_0 - v_4$ as well as $v$, for a total of $|B_0 - v_4| + 1$ common neighbors.
By Corollary~\ref{c:uncommon}, if $|B_0 - v_4| + 1 \geq 7$, then this graph is 4-linked, so we must have $|B_0 - v_4| \leq 5$.
But then, since every vertex in $B_0$ is adjacent to all but exactly 1 vertex in $B$, we see that $B - v_4$ contains a clique of order \[ |B - v_4| - \left\lfloor \frac{|B_0 - v_4|}{2} \right\rfloor \geq |B| - 1 - 2 = |B| - 3 \geq 6. \]
Since $B - v_4$ has no 7-clique, we have equality here, so that $|B| = 9$ and $\left\lfloor \frac{|B_0 - v_4|}{2} \right\rfloor = 2$ exactly, and the largest clique in $|B_0 - v_4|$ is of order $|B_0 - v_4| - 2$: that is, $|B_0 - v_4| \in \{ 4, 5 \}$ and $\overline{H}[B_0 - v_4]$ has exactly 2 edges.
Note that $v_4$ is adjacent (in $H$) to both ends of these 2 edges; otherwise, any of these vertices that was not adjacent to $v_4$ would have 2 non-neighbors in $B$, a contradiction.
We can then form a clique out of every vertex in $B - (B_0 - v_4)$, exactly 2 vertices from $B_0 - v_4$, and the vertex $v$.
If $|B_0 - v_4| = 4$, then we have $|B - (B_0 - v_4)| = 5$, so this gives us an 8-clique.
Thus $|B_0 - v_4| = 5$.
In that case, we must have $|B_0| = 6$, with $v_4$ having exactly 1 non-neighbor in $B$.
Since $|B| = 9$, it follows that every vertex in $B - B_0$ has exactly 8 neighbors in $B$ and thus at least 7 neighbors in $C'$.
Note that, since we have $s_i + s_i' = s_j + s_j' = 4$ for all $a, a' \in A_0 - u_4$ and $b, b' \in B_0 - v_4$, and since $a^*$ has 8 neighbors in $C'$, at most 3 of which are in $C_k$, $a^*$ either has 3 neighbors in $C_i$ or 3 neighbors in $C_j$ (without loss of generality, the former).
Then, by Lemma~\ref{l:si2}(b), the middle vertex of $C_i$ has no neighbor in $B$, so every vertex in $B - B_0$ has at most 2 neighbors in $C_i$ and at most 3 neighbors in $C_j$, hence at least 2 neighbors in $C_k$.
We claim that, for every $b \in B - B_0$, $b$ has at least 2 neighbors in $\{ y, z, v_k \}$.
If not, since we have already observed that $x$ has no neighbor in $B$, the only way $b$ could have 2 neighbors in $C_k$ is if $N(b) \cap C_k = \{ u_k, y \}$.
Let $b' \in B_0$; then $bb' \in E(H)$ and $b'$ is complete to $\{ y, z, v_k \}$, so we can replace $C_k$ with $u_k b b' v_k$ and replace $C_4$ with a path through $a^*$ and $y$ and any vertex in $B_0 - b'$.
Also note that every vertex in $B$, including $v_4$, is anticomplete to $x$ and to the middle vertex of $C_i$, so, since $v_4$ has at least 8 neighbors in $C'$, it must be complete to $\{ y, z, v_k \}$.
When we turn our attention back to $H[B \cup \{ v, y, z, v_k \} - v_4]$, we now see that, given any nonadjacent pair consisting of a vertex in $\{ y, z, v_k \}$ and a vertex $b \in B_0$, it must be the case that $b$ is adjacent to the other two vertices of $\{ y, z, v_k \}$, so $b$ and its non-neighbor on this path have at least 1 common neighbor on this path.
They also have every vertex in $B_0$, including $v_4$, as a common neighbor, so this pair has at least $|B_0| + 2 = 8$ common neighbors.
As before, any non-adjacent pair of vertices in $B_0 - v_4$ has $|B| + 1 \geq 10$ common neighbors, and, since $u_4$ is now known to be a common neighbor of $y$ and $v_k$, those two vertices have $|B_0| + 2 \geq 8$ common neighbors.
The only time we can't apply Proposition~\ref{p:prop1} to show that the graph is 4-linked is in the case where our pairs are of the form $(y, b_1), (z, b_2), (v_k, b_3), (b, b')$, where $B - B_0 = \{ b_1, b_2, b_3 \}$ and $b, b' \in B_0$; in this one specific case, though, we have four vertices in $B_0$ that belong to none of the four pairs, so we can use those as the internal vertices to link all four pairs with paths of length 3.
In all other cases, we will have at most two pairs with an end in $\{ y, z, v_k \}$ and so we can find one pair each with 7, 8, 9, and 10 common neighbors.
Thus the graph is indeed 4-linked, a contradiction.

We claim that every vertex in $B - v_4$ has at most 6 neighbors in $C'$.
Recall that there is $a^* \in A - u_4$ that has 8 neighbors in $C'$.
Since $|N(a^*) \cap C_k| \leq 3$, we have $|N(a^*) \cap (C_i \cup C_j)| \geq 5$; we may assume $|N(a^*) \cap C_j| \geq 3$.
Then, since we have $s_j + s_j' \geq 3$, it follows from Lemma~\ref{l:si2} that $|C_j| = 3$ and the middle vertex of $C_j$ has no neighbor in $B$, so every vertex in $B$ has at most 2 neighbors in $C_j$.

Suppose $|C_k| = 4$.
Then $s_k + s_k' = 2$ for all choices of $a, a', b, b'$, implying that $s_i + s_i' = s_j + s_j' = 4$ for all choices of $a, a', b, b'$. 
Moreover, $v \notin C_k$ and $v \notin C_j$, so $v \in C_i$.
Since $s_i + s_i' = 4$ for all $a, a', b, b'$, every vertex in $(A_0 \cup B_0) - \{ u_4, v_4 \}$ is complete to $\{ u_j, v_j \}$ by Lemma~\ref{l:si2}(c), so it must be the case that $|C_i| = 2$; we may assume without loss of generality that $v_i = v$.
Then the vertex $a^*$ has at most 5 neighbors in $C_i \cup C_j$, so it has 3 neighbors in $C_k$; labeling the vertices of $C_k$ as $u_k x y v_k$, we may assume $a^*$ is complete to $\{ u_k, x, y \}$.
If any $b \in B - v_4$ has 3 neighbors in $C_k$, then $b$ must be adjacent to $x$.
In that case, $x$ has no other neighbors in $A$, otherwise we could perform an $(x, a^*)$-reroute of $C_k$ and $C_4$.
But then, for any $a \in A - a^*$, we have $N(a) \cap N(x) \subseteq C_i \cup C_j \cup \{ u_k, y, a^* \}$, so that \[ |N(a) \cup N(x)| = |N(a)| + |N(x)| - |N(a) \cap N(x)| \geq 2\delta(H) - 8, \] which implies \[ |B| \leq |H| - |N(a) \cup N(x)| \leq 2\delta(H) - 1 - \big( 2\delta(H) - 8 \big) = 7, \] a contradiction.
Thus no vertex in $B - v_4$ can have 3 neighbors in $C_k$.
Since we have shown that every vertex in $B$ has at most 2 neighbors in $C_j$ and $|C_i| \leq 2$, it follows that, in the case where $|C_k| = 4$, every vertex in $B$ has at most 6 neighbors in $C'$, as desired.

Now suppose $|C_k| \leq 3$, so that $\max \{ |C_i|, |C_j|, |C_k| \} = 3$.
If $\min \{ |C_i|, |C_j|, |C_k| \} = 2$ (say $|C_i| = 2$), then $a^*$ is complete to $C'$, having 3 neighbors in $C_j$ and 3 neighbors in $C_k$.
By Lemma~\ref{l:si2}(b), the middle vertex of $C_j$ is then anticomplete to $B$, so every vertex in $B$ has at most 2 neighbors in $C_j$.
If $s_k + s_k' \geq 3$ for some choice of $a, a', b, b'$, then the middle vertex of $C_k$ is also anticomplete to $B$, so every vertex in $B$ has at most 6 neighbors in $C'$, as desired.
If $s_k + s_k' = 2$ for every choice of $a, a', b, b'$, then necessarily $s_k = 1$ for every choice of $a, b$.
This implies that no vertex of $B - v_4$ has 3 neighbors in $C_k$, otherwise, for that choice of $b$ together with $a^*$, we would have $s_k = 3$.
Thus, we may assume $|C_i| = |C_j| = |C_k| = 3$.
As before, $a^*$ is complete to two of these paths, so the middle vertex of each of those two paths has no neighbor in $B$.
The third path contains exactly two neighbors of $a^*$, and its middle vertex must also be the vertex $v$.
Then no vertex of $B - v_4$ is complete to this third path, otherwise (if, say, $b \in B - v_4$ is the vertex in question) we would be able to perform a $(v, b)$-reroute of this path and $C_4$.

Now every vertex in $B - v_4$ has at most 6 neighbors in $C'$, hence at least $\delta(H) - 6 \geq 9$ neighbors in $B$, and $v_4$ has at most 7 neighbors in $C'$, hence at least 8 neighbors in $B$.
For any two non-adjacent vertices $b, b' \in B_0 - v_4$, the number of common neighbors of $b$ and $b'$ in $B$ is \[ |N(b) \cap N(b') \cap B| = |N(b) \cap B| + |N(b') \cap B| - |[N(b) \cup N(b')] \cap B| \geq 9 + 9 - (|B| - 2) = 20 - |B|. \]
These vertices then have $21 - |B|$ common neighbors in $B \cup \{ v \}$.
For any vertex $b \in B_0$ that is not adjacent to $v_4$, the number of common neighbors of $b$ and $v_4$ in $B$ is \[ |N(v_4) \cap B| + |N(b) \cap B| - |[N(v_4) \cup N(b)] \cap B| \geq 8 + 9 - (|B| - 2) = 19 - |B|, \] so these two vertices have at least $20 - |B|$ common neighbors in $B \cup \{ v \}$.
If $21 - |B| \geq 10$, then $H[B \cup \{ v \}]$ is 4-linked by Corollary~\ref{c:uncommon}, so we must have $21 - |B| \leq 9$ and thus $|B| \geq 12$.

Since $|C'| \geq 8$, $|B| \geq 12$, and $|H| \leq 30$, we must have $|A| \leq 10$.
If $|H| = 30$, then we have $2\delta(H) \geq |H| + 2$, and we either have $|A| = 10$ and $|C'| = 8$ or $|A| = 9$ and $|C'| = 9$.
If $|A| = 9$ and $|C'| = 9$, then every vertex in $A$ has at most 9 neighbors in $C'$, hence at least 6 neighbors in $A$; if $|A| = 10$ and $|C'| = 8$, then every vertex in $A$ has at most 8 neighbors in $C'$, hence at least 7 neighbors in $A$.
Either way, $\Delta(\overline{H}[A]) \leq 2$.
By Lemma~\ref{l:si}(d), if we take a vertex $a''$ with degree 2 in $\overline{H}[A]$ and a vertex $b''$ with degree 1 in $\overline{H}[B]$, we will have $\sum_{i=1}^3 s_i'' \geq (|H| + 2) - (|H| - 2) + 2 + 1 = 7$, so that $s_i'' = 3$ for some $i \in [3]$.
Then, for this choice of $i$, $|C_i| = 3$ and $\{ a'', b'' \}$ is complete to $C_i$.
But then the middle vertex $x$ of $C_i$ can have no neighbor in $(A \cup B) \setminus \{ a'', b'' \}$, otherwise we can perform an $(x, a'')$- or $(x, b'')$-reroute of $C_i$ and $C_4$.
Then $x$ has at most 7 neighbors in $C_i$ and 2 neighbors in $A \cup B$, so that $d(x) \leq 9 < \delta(H)$, a contradiction.
Thus we must have $\Delta(\overline{H}[A]) = 1$; similarly, $\Delta(\overline{H}[B]) = 1$.
Then, for every $a'' \in A_0$ and every $b'' \in B_0$, we have $\sum_{i=1}^3 s_i'' \geq 6$; we have seen that we get a contradiction if $s_i'' = 3$ for any $i \in [3]$, so we must have $s_1'' = s_2'' = s_3'' = 2$.
This implies that $A_0$ and $B_0$ are complete to $\{ u_1, v_1, u_2, v_2, u_3, v_3 \}$.
But $|B_0| = 12$, so $B_0$ contains a 6-clique.
By taking any vertex from $\{ u_1, v_1, u_2, v_2, u_3, v_3 \} \setminus v$, together with $v$ and with a 6-clique in $B_0$, we get an 8-clique, a contradiction.
Thus $|H| \neq 30$.
We must then have $|H| = 29, |B| = 12, |C'| = 8$, and $|A| = 9$.
Then every vertex in $A$ has at most $8$ neighbors in $C'$, hence at least 7 neighbors in $A$, so every vertex in $\overline{H}[A]$ has degree 1.
Moreover, every vertex in $A_0$ has exactly 7 neighbors in $A$ and is thus complete to $C'$, while every vertex in $A - A_0$ has 8 neighbors in $A$ and is thus complete to every vertex in $C'$ but one.
Choose $i \in [3]$ such that $|C_i| = 3$ and $v \notin C_i$; write $C_i = u_i x v_i$.
We claim that $H[A \cup C_i \cup \{ v \}]$ is a 4-linked graph.
The pairs of non-adjacent vertices in this graph are the pair $\{ u_i, v_i \}$, some number of pairs of vertices in $A_0$, and some number of pairs of vertices with one end in $C_i$ and the other end in $A - A_0$.
The vertices $u_i$ and $v_i$ have every vertex in $A_0$, as well as $x$ and $v$, as common neighbors, for a total of $|A_0| + 2$ common neighbors.
Any pair of non-adjacent vertices in $A_0$ is complete to every other vertex in $A \cup C_i \cup \{ v \}$, so they have $|A| + 2 \geq 11$ common neighbors.
Given a vertex $a \in A - A_0$ that is not complete to $C_i$, we know that $a$ is adjacent to 2 of the 3 vertices of $C_i$.
If the vertex that $a$ is not adjacent to is $x$, then the common neighbors of $a$ and $x$ include every vertex of $A_0$ as well as $u_i, v_i$, and $v$, for a total of $|A_0| + 3$ common neighbors.
If the vertex that $a$ is not adjacent to is an endpoint of $C_i$, then the common neighbors of $a$ and that endpoint include every vertex of $A_0$ as well as $x$ and $v$, for a total of $|A_0| + 2$ common neighbors.
So, given any 4 pairs of non-adjacent vertices in this graph, we have at most 3 pairs with at least $|A_0| + 2$ common neighbors, with the fourth pair necessarily having 11 common neighbors.
By Proposition~\ref{p:prop1}, if $|A_0| + 2 \geq 9$, then this graph is 4-linked, so we may assume $|A_0| \leq 6$.
On the other hand, since $A \cup \{ v \}$ has a clique of order $|A - A_0| + \left\lceil \frac{|A_0|}{2} \right\rceil + |\{ v \}| = 10 - \left\lfloor \frac{|A_0|}{2} \right\rfloor$ and has no 8-clique, we must have $\left\lfloor \frac{|A_0|}{2} \right\rfloor \geq 3$, so that $|A_0| = 6$.
Each of the 3 vertices in $A - A_0$ is adjacent to all but 1 of the 7 vertices in $C' - v$, so there is a vertex in $y \in C' - v$ that is adjacent to all 3 of them.
But then taking a 3-clique in $A_0$, together with all 3 vertices of $A - A_0$, $v$, and $y$, gives us an 8-clique, a contradiction.
\end{proof}

Now every vertex in $A - u_4$ has at most 7 neighbors in $C'$, hence at least 8 neighbors in $A$, so that $|A| \geq 9$.
Since $A - u_4$ is not an 8-clique, $A_0 - u_4$ is non-empty, so that every vertex in $A - u_4$ has at least 8 neighbors and at least 1 non-neighbor in $A$, implying that $|A| \geq 10$.
If there are $a \in A - u_4$ and $b \in B - u_4$ such that $|A - N[a]| + |B - N[b]| \geq 4$, then, by Lemma~\ref{l:si}(d), we would have $\sum_{i=1}^4 s_i \geq 7$, implying that there is $i \in [3]$ such that $s_i \geq 3$.
In that case, though, the middle vertex $x$ of $C_i$ could have no neighbor in $(A \cup B) - \{ a, b \}$; otherwise, if it had a neighbor $a' \in A - a$, we could perform an $(x, a$)-reroute of $C_i$ and $C_4$.
But then we would have $d(x) \leq |H| - |A \cup B| - |\{ x \}| \leq 30 - 20 - 1 = 9 < \delta(H)$, a contradiction.
Thus $|A - N[a]| + |B - N[b]| \leq 3$ for all $a \in A - u_4$ and $b \in B - u_4$, so we may assume without loss of generality that every vertex in $\overline{H}[A]$, except possibly for $u_4$, has degree at most 1.
Let $d$ be the degree of $u_4$ in $\overline{H}[A]$.
Then, in the graph $H[A \cup \{ v \}]$, every pair of non-adjacent vertices that includes $u_4$ has $(|A| + 1 - 2) - (d - 1) = |A| - d$ common neighbors, and every pair of non-adjacent vertices that does not include $u_4$ has $|A| - 1$ common neighbors.
Since $H[A \cup \{ v \}$ is not 4-linked, by Corollary~\ref{c:uncommon}, we either have $|A| - 1 \leq 9$ or $|A| - d \leq 6$.
But we can also note that any vertex in $A - N[u_4]$ is complete to $A - u_4$ since it only has degree 1 in $\overline{H}[A]$, so taking every non-neighbor of $u_4$ (all $d$ of them), together with the largest possible clique in $N(u_4) \cap A$ (which has order at least $\left\lceil \frac{|A| - d - 1}{2} \right\rceil$) and $v$, gives us a clique of order $\left\lceil \frac{|A| + d + 1}{2} \right\rceil$, so we must have $|A| + d + 1 \leq 14$.
This implies $-d \geq |A| + 1 - 14$, so that $|A| - d \geq 2|A| + 1 - 14 \geq 7$.
Therefore, we cannot have $|A| - d \leq 6$, so we must have $|A| - 1 \leq 9$, implying that $|A| = 10$ exactly.
Then every vertex in $A_0 - u_4$ has exactly 8 neighbors in $A$ and thus exactly 7 neighbors in $C'$.
If $|A_0 - u_4| \leq 3$, then taking 2 adjacent vertices in $A_0 - u_4$, together with every vertex of $A - (A_0 \cup \{ u_4 \})$, of which there are at least 6, gives us an 8-clique, so we must have $|A_0 - u_4| \geq 4$.

\begin{claim}
There is a subgraph of $H[C']$ that is isomorphic to $P_3$ and complete to $A_0 - u_4$.
\end{claim}
\begin{proof}
Suppose that $|C_k| = 5$ for some $k \in [3]$.
Arguing as in Claim~\ref{claim:8neighbors}, 
we must have $\sum_{i=1}^3 s_i \geq 10$, and so we must have $s_k = 1$ for every choice of $a$ and $b$, which implies that every vertex of $(A_0 \cup B_0) - \{ u_4, v_4 \}$ has 3 neighbors in $C_k$.
If we write $C_k = u_k x y z v_k$, then $y$ is complete to $(A_0 \cup B_0) - \{ u_4, v_4 \}$, so no vertex of $(A_0 \cup B_0) - \{ u_4, v_4 \}$ is complete to $\{ x, y, z \}$, otherwise, calling that vertex $w$, we can perform a $(y, w)$-reroute of $C_k$ and $C_4$.
Thus every vertex in $(A_0 \cup B_0) - \{ u_4, v_4 \}$ is complete to either $\{ u_k, x, y \}$ or $\{ y, z, v_k \}$.
By the pigeonhole principle, we may assume without loss of generality that there are $a, a' \in A_0 - u_4$ are both complete to $\{ u_k, x, y \}$.
Then no vertex in $B - v_4$ is complete to $\{ u_k, x, y \}$, otherwise we can perform an $(x, a)$-reroute of $C_k$ and $C_4$.
Thus every vertex in $B_0 - v_4$ is complete to $\{ y, z, v_k \}$, and a symmetrical argument shows that no vertex of $A_0 - u_4$ is complete to $\{ y, z, v_k \}$, so that $H[\{ u_k, x, y \}]$ is a $P_3$ that is complete to $A_0 - u_4$, as desired.

We may assume $\max \{ |C_1|, |C_2|, |C_3| \} \leq 4$.
By the pigeonhole principle, there exist $i \in [3]$ and $a, a' \in A_0 - u_4$ such that $a$ and $a'$ each have 3 neighbors in $C_i$.
Let $x$ be the middle vertex of the three neighbors of $a$ on $C_i$.
Then $a'$ must be adjacent to $x$ as well.
This implies that $x$ has no neighbor in $B$, otherwise we can perform an $(x, a)$-reroute of $C_i$ and $C_4$.
Then no vertex in $B$ has 3 consecutive neighbors on $C_i$, so that every vertex in $B$ has at most 2 total neighbors on $C_i$.
If $B$ has at least 3 vertices that each have 7 neighbors in $C'$, then each of these 3 vertices must have at least 5 neighbors in $C' - C_i$, so that there exist $j \in [3] - i$ and $b, b' \in B$ such that $b$ and $b'$ each have 3 neighbors in $C_j$; we may assume $b \neq v_4$.
A symmetrical argument shows that some vertex in $C_j$ has no neighbor in $A$.
Then every vertex in $A_0 - u_4$ has 7 neighbors in $C'$ and at most 2 neighbors in $C_j$, so it has at least 5 neighbors in $C - C_j$.
The vertex $v$ then cannot belong to $C_i$ or $C_j$, as each of these paths has an internal vertex that is not complete to $A \cup B$; let $C_k$ be the path of $C'$ that contains $v$.
Then no vertex of $(A \cup B) - \{ u_4, v_4 \}$ can have 3 neighbors in $C_k$, otherwise, calling that vertex $w$, we could perform a $(v, w)$-reroute of $C_k$ and $C_4$.
Therefore, every vertex of $A - u_4$ has at most 2 neighbors in $C_j$ and at most 2 neighbors in $C_k$, hence every vertex of $A - u_4$ has 3 neighbors in $C_i$.
If $|C_i| = 3$ or if every vertex of $A - u_4$ has the same 3 neighbors on $C_i$, then we are done.
If not, then $|C_i| = 4$ and we can write $C_i = u_i x y v_i$, where some $a \in A_0 - u_4$ is complete to $\{ u_i, x, y \}$ and some $a' \in A_0 - u_4$ is complete to $\{ x, y, v_i \}$.
We previously observed that $x$, the middle of the three neighbors of $a$, has no neighbor in $B$, and the same argument shows that $y$, the middle of the three neighbors of $a'$, has no neighbor in $B$ either.
By Lemma~\ref{l:si}(b), no vertex of $B - v_4$ is adjacent to both $u_i$ and $v_i$, so every vertex in $B - v_4$ has at most 1 neighbor in $C_i$, at most 3 neighbors in $C_j$, and at most 2 neighbors in $C_k$, contrary to our assumption that 3 vertices in $B$ each have 7 neighbors in $C'$.

Now suppose at most 2 vertices in $B$ have 7 neighbors in $C'$.
If we call these vertices $b$ and $b'$, then we have \[ |N(b) \cap N(b') \cap B| = |N(b) \cap B| + |N(b') \cap B| - |[N(b) \cup N(b')] \cap B| \geq 8 + 8 - (|B| - 2) = 18 - |B|. \]
Every other vertex in $B$ has at most 6 neighbors in $C'$ and thus at least 9 neighbors in $B$.
If $b''$ is one of these vertices, then the number of common neighbors of $b$ and $b''$ in $B$ is at least \[ 8 + 9 - (|B| - 2) = 19 - |B|, \] and the number of common neighbors of any two vertices in $B$ other than $b$ and $b'$ is at least \[ 9 + 9 - (|B| - 2) = 20 - |B|. \]
The number of common neighbors each of these pairs has in $B \cup v$ will be 1 more than this.
By Proposition~\ref{p:prop1}, if $20 - |B| + 1 \geq 10$, then $H[(B - v_4) \cup v]$ would be 4-linked, a contradiction, so we must have $21 - |B| \leq 9$ and thus $|B| \geq 12$.
Since $|B| \geq 12$, $|A| = 10$, and $|H| \leq 30$, we have $|C'| \leq 8$; since every vertex in $A_0 - u_4$ has exactly 7 neighbors in $C'$, we have $|C'| \in \{ 7, 8 \}$.
If $A_0 - u_4$ is complete to $C'$, then there is some $i \in [3]$ such that $|C_i| = 3$ and $A_0 - u_4$ is complete to $C_i$, as desired.
Thus we may assume that $A_0 - u_4$ is not complete to $C'$, which implies $|C'| = 8$ and so $|H| = 30$.
We then have $2\delta(H) \geq |H| + 2$, so, by Lemma~\ref{l:si}(d), for any $a \in A_0$ and $b \in B_0$, we have $\sum_{i=1}^3 s_i \geq 6$.
If $s_i = 3$ for any $i \in [3]$, then $|C_i| = 3$ and $\{ a, b \}$ is complete to $C_i$.
The middle vertex $x$ of $C_i$ then has no neighbors in $(A \cup B) \setminus \{ a, b \}$, otherwise we can perform an $(x, a)$- or $(x, b)$-reroute of $C_i$ and $C_4$.
But then $d(x) \leq |C'| - 1 + 2 = 9 < \delta(H)$, a contradiction.
Thus $s_1 = s_2 = s_3 = 2$; since we have equality, we must also have $\Delta(\overline{H}[A]) = \Delta(\overline{H}[B]) = 1$.
We have $2 \leq |C_1| \leq \left\lfloor \frac{|C'|}{3} \right\rfloor = 2$, so $C_1$ is a $K_2$ that is complete to $B_0$.
If $|B_0| = 12$, then $B_0$ contains a 6-clique and so $B_0 \cup C_1$ contains an 8-clique, a contradiction.
But if $|B_0| < 12$, then $B$ contains a 7-clique and so $B \cup \{ v \}$ contains an 8-clique, giving us another contradiction.
\end{proof}

Now we have a $P_3$ that is complete to $A_0$; label its vertices $x_1 y x_2$.
Note that $v \neq x_1$ and $v \neq x_2$, because $x_1 x_2 \notin E(H)$, and $v \neq y$, because our arguments above showed that, no matter what vertex $y$ is chosen to be, it can have no neighbor in $B - v_4$.
We can thus consider the graph $H[A \cup \{ x_1, y, x_2, v \}]$; we claim that this graph is 4-linked.

The pairs of non-adjacent vertices in this graph are the pair $\{ x_1, x_2 \}$, some number of pairs of vertices in $A_0$, and some number of pairs of vertices with one end in $\{ x_1, y, x_2 \}$ and the other end in $A - A_0$.
The vertices $x_1$ and $x_2$ have every vertex in $A_0$, as well as $y$ and $v$, as common neighbors, for a total of $|A_0| + 2$ common neighbors.
Any pair of non-adjacent vertices in $A_0$ is complete to every other vertex in $A \cup \{ x_1, y, x_2, v \}$, so they have $|A| + 2 = 12$ common neighbors.
Given a vertex $a \in A - A_0$ that is not complete to $C_i$ 
the common neighbors of $a$ and any non-adjacent vertex on the $P_3$ include every vertex of $A_0$ as well as $v$, for a total of $|A_0| + 1$ common neighbors.
So, given any 4 pairs of non-adjacent vertices in this graph, we have at most 3 pairs with at least $|A_0| + 1$ common neighbors, with the fourth pair necessarily having 12 common neighbors.
By Proposition~\ref{p:prop1}, if $|A_0| + 1 \geq 9$, then this graph is 4-linked, so we may assume $|A_0| \leq 7$.
At the same time, $A \cup \{ v \} - \{ u_4 \}$ has a clique of order $|A| - 1 - \left\lfloor \frac{|A_0|}{2} \right\rfloor + 1 = 10 - \left\lfloor \frac{|A_0|}{2} \right\rfloor$; since $H$ has no 8-clique, we must have $\left\lfloor \frac{|A_0|}{2} \right\rfloor \geq 3$, so that $|A_0| \geq 6$.
Since $A_0$ is complete to the clique $\{ x_1, y, v \}$, $A_0$ can have no 5-clique, so $\overline{H}[A_0]$ has at least 2 edges.
If $|A_0| = 6$, then $A_0 \cup u_4$ contains a 4-clique: if $\overline{H}[A_0]$ has 2 edges, then $A_0$ itself has a 4-clique, and if $\overline{H}[A_0]$ has 3 edges, then taking 1 vertex from each edge together with $u_4$ gives us the 4-clique.
This 4-clique, together with the 3 vertices of $A - (A_0 \cup u_4)$ as well as $v$, gives us an 8-clique, a contradiction, so we have $|A_0| = 7$.
If $\overline{H}[A_0]$ has 2 edges, then $A_0$ has a 5-clique, which, together with the 2 vertices of $A - (A_0 \cup u_4)$ as well as $v$, give us an 8-clique, so $\overline{H}[A_0]$ has 3 edges, meaning one vertex of $A_0$ has $u_4$ as its non-neighbor and each of the other six vertices of $A_0$ has its non-neighbor in $A_0$.
Now consider the graph $H[A \cup \{ v, x_1 \}]$.
The non-adjacent pairs in this graph consist of four pairs of vertices in $A_0$ (one of which includes $u_4$), up to 2 pairs of vertices composed of $x_1$ together with a vertex in $A - A_0$, and possibly the pair $\{ u_4, x_1 \}$.
Let $y_1, y_2, y_3, y_4, z_1, z_2, z_3, z_4$ be distinct vertices in this graph such that $y_1 z_1, y_2 z_2, y_3 z_3, y_4 z_4 \notin E(H)$.
If $x_1$ is one of these vertices, we may assume it equals $y_1$.
In that case, we can link $y_1$ to $z_1$ with $v$; the remaining pairs must all be from $A_0$, so they are all complete to the 3 vertices in $(A \cup \{ v, x_1 \}) - \{ y_1, y_2, y_3, y_4, z_1, z_2, z_3, z_4, v \}$, so we have ample freedom to link those three remaining pairs with paths of length 3.
If $x_1$ does not belong to the set of eight vertices, then each of the pairs comes from $A_0$: the pair that includes $u_4$ has 3 common neighbors outside of $A_0$ (the two vertices of $A - A_0$ as well as $v$) and every pair that does not include $u_4$ has 4 common neighbors outside of $A_0$ (the same 3 common neighbors as the pair including $u_4$, as well as $x_1$), so each pair can be linked with a path of length 3.
Thus $H$ has a 4-linked subgraph.
\end{proof}

\section{Appendix 2}

In this appendix, we prove Lemma~\ref{knitted1}(c) with two lemmas. Lemma~\ref{l:app2part1} covers cases (ii) and (iii) of Lemma~\ref{knitted1}, and Lemma~\ref{l:app2part2} covers case (i).

\begin{lemma}
\label{l:app2part1}
Let $H$ be a graph, $v \in V(H)$ such that $H = N[v]$.
Suppose $\delta(H) \geq 9$ and $|H| \leq 16$.
Suppose further that, if $15 \leq |H| \leq 16$, then $H$ has at most 2 vertices of degree 9, and if it has 2 such vertices, they are not adjacent.
Then $H$ has a $(2,2,2,1)$-knitted subgraph.
\end{lemma}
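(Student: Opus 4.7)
The plan is to follow the overall strategy used in the proof of Lemma~\ref{l:23case} for the case $p = 42$, but adapted to $k = 3$ (three 2-pairs and a singleton). Assume for contradiction that $H$ has no $(2,2,2,1)$-knitted subgraph. By Corollary~\ref{c:common} with $k=3$, $H$ contains no $K_7$ and no subgraph $L$ on at least $7$ vertices in which every non-adjacent pair has at least $3k-1 = 8$ common neighbors. Since $H$ itself is not $(2,2,2,1)$-knitted, pick vertices $u_0, u_1, v_1, u_2, v_2, u_3, v_3$ witnessing this, and define $C = C_0 \cup C_1 \cup C_2 \cup C_3$ exactly as in the $p=42$ proof: $C_0 = \{u_0\}$, each $C_i$ for $i \in \{1,2\}$ is either an induced $(u_i, v_i)$-path on at most $5$ vertices or the pair $\{u_i, v_i\}$, and $C_3 = \{u_3, v_3\}$, chosen so as to maximize the number of connected $C_i$ and then minimize $|C|$. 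Let $s$ and $t$ be the analogues of the parameters in the $p=42$ proof.

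The first main step is to show $s \geq 2$, that is, both $C_1$ and $C_2$ can be taken connected. If $C_r$ is the first disconnected index ($r \le 2$), set $A_r = N(u_r) - C$, $B_r = N(v_r) - C$, and use $\delta(H) \ge 9$ together with $|C_j| \le 5$ for $j < r$ to lower-bound $|A_r|, |B_r|$. Since the absence of an $(A_r, B_r)$-edge forces $v$ to be complete to $A_r \cup \{u_r\}$ (and likewise to $B_r \cup \{v_r\}$) and $H$ has no $K_7$, we get $\Delta(\overline{H}[A_r]), \Delta(\overline{H}[B_r]) \ge 1$. Picking non-adjacent pairs $a,a' \in A_r$ and $b,b' \in B_r$ and applying Lemma~\ref{l:si}(d) to $(a,b)$ and $(a',b')$, combined with Lemma~\ref{l:si}(a) for disconnected $C_j$ and Lemma~\ref{l:si}(c) for the remaining terms, yields a contradiction or forces $A_r \cup \{u_r, v\}$ to be dense enough that Proposition~\ref{p:prop1} gives a $(2,2,2,1)$-knitted subgraph.

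The second step shows $t = 3$, that even $C_3$ can be taken connected. Let $A_3 = N(u_3) - C$, $B_3 = N(v_3) - C$, and let $A, B$ be the components of $H - (C - \{u_3, v_3\})$ containing $u_3, v_3$ respectively; the bound $|A_3| \ge \delta(H) - (1 + 5 + 5 + 1) \ge 0$ is delicate here and is the first point where the two-vertices-of-degree-$9$ hypothesis enters, via a variant of Claim~5.9 in the $p=42$ proof that rules out $A_3 = \varnothing$. If $\Delta(\overline{H}[A - u_3]) \le 1$, then every non-adjacent pair in $A \cup \{v\}$ has at least $|A| - 1$ common neighbors, so once $|A|$ is large enough Corollary~\ref{c:common} makes $H[A \cup \{v\}]$ be $(2,2,2,1)$-knitted, a contradiction. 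Otherwise pick $a, a' \in A - u_3$ and $b, b' \in B - v_3$ with $a, b$ of maximum codegree, apply Lemma~\ref{l:si}(d) to both pairs, use Lemma~\ref{l:si}(a),(c) to bound $s_3, s_3', s_0, s_0'$, and use Lemma~\ref{l:si}(e) and Lemma~\ref{l:si2} on the remaining two indices to force the resulting $\Delta(\overline{H}[A - u_3])$ to drop back to $\le 1$, contradicting the opening of the case.

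The hardest step is managing the slack in the counting. With $p = 18$ and $\lfloor p/2 \rfloor = 9$, the inequalities in Lemma~\ref{l:si}(d) and Lemma~\ref{l:si2} are tight at essentially every stage, and the exceptional clause allowing up to two non-adjacent vertices of degree exactly $9$ when $|H| \in \{15,16\}$ must be carried through every subcase. The case analysis splits on $|C_1|, |C_2| \in \{2,3,4,5\}$ and on the distribution of the $s_i$'s as in the treatment of $|C_k| \in \{2,3,4,5\}$ on pages treating the $p=42$ and $p=30$ proofs; in each subcase one either locates an interior vertex $x$ of some $C_i$ with $d_H(x) < \delta(H)$ (impossible), or identifies a subgraph of the form $A \cup \{v\}$ or $A \cup (\text{a short } P_3) \cup \{v\}$ dense enough to invoke Proposition~\ref{p:prop1} or Corollary~\ref{c:uncommon} and so contain a $(2,2,2,1)$-knitted subgraph. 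The main obstacle will be bookkeeping in the boundary cases $|H| \in \{15, 16\}$ where the degree hypothesis is weakest; here the extra-exception clause is exactly what is needed to keep the clique bound $|H| - \lceil|A_0|/2\rceil \le 6$ from being violated, and handling these cases cleanly is the principal difficulty.
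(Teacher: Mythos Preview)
Your plan mirrors the $p=42$ argument too closely, and the degree assumption $\delta(H)\ge 9$ is simply not strong enough to carry those steps through. Concretely, your first step needs $|A_r|$ large enough that $A_r\cup\{u_r,v\}$ being $K_7$-free forces $\Delta(\overline{H}[A_r])\ge 1$; that requires $|A_r|\ge 5$. But with $|C_j|\le 5$ for $j<r$ and $|C_j|=2$ for $j>r$, one only obtains $|A_r|\ge \delta(H)-1-5(r-1)-1-2(3-r)=\delta(H)-3r-3$, which for $\delta(H)=9$ gives $|A_1|\ge 3$ and $|A_2|\ge 0$. So the ``no $K_7$ $\Rightarrow$ a non-edge in $A_r$'' step fails at the outset, and the subsequent applications of Lemma~\ref{l:si}(d) and Lemma~\ref{l:si2} never get off the ground. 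Your second step, ``show $t=3$,'' has the same problem in a more acute form: the lower bounds on $|A|,|B|$ you obtain (e.g.\ $|A_3|\ge \delta(H)-(1+5+5+1)\ge -3$) are useless, and the $s_i$-inequalities from Lemma~\ref{l:si} give far too little to force any contradiction.

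The paper takes a genuinely different route for this lemma. It restricts $C_1,C_2$ to paths on at most \emph{four} vertices (not five), and proves $t\ge 1$ and $t\ge 2$ by direct common-neighbor counts (e.g.\ $|N(a)\cap N(v_1)|\ge 5$ forces a short path) rather than via a $K_7$-argument. It then does \emph{not} attempt to prove $t=3$; instead it assumes $t=2$ and performs a case analysis on $|C'|=|C_0\cup C_1\cup C_2|\in\{5,6,7,8,9\}$, in each case either finding a $7$-clique or a contradiction to $\delta(H)\ge 9$ by exploiting that $|A|+|B|+|C'|\le |H|\le 16$. The extra hypothesis about at most two non-adjacent degree-$9$ vertices when $|H|\in\{15,16\}$ is invoked precisely in the common-neighbor counts for Claims $t\ge 1$ and $t\ge 2$ and in the $|C'|\in\{7,8,9\}$ cases, not in a global $\Delta(\overline{H}[A])$ argument as you suggest. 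Your outline would need a substantial restructuring along these lines to close.
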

\begin{proof}
Suppose $H$ has no $(2,2,2,1)$-knitted subgraph.
This implies that $H$ has no 7-clique. 
Since $H$ itself is not $(2,2,2,1)$-knitted, we will define $u_0, u_1, v_1, u_2, v_2, u_3, v_3$ as in the proof of Lemma~\ref{l:23case}, and define $C = C_0 \cup C_1 \cup C_2 \cup C_3 \subseteq V(H)$ as follows:
\begin{enumerate}[(i)]
    \item $C_0 = \{ u_0 \}$
    \item If the graph $H \setminus C_0 \cup \{ u_1, v_1, u_2, v_2, u_3, v_3 \}$ has a $(u_1, v_1)$-path on at most 4 vertices, then $C_1$ is the vertex set of this path. Otherwise, $C_1 = \{ u_1, v_1 \}$.
    \item If the graph $H \setminus (C_0 \cup C_1 \cup \{ u_2, v_2, u_3, v_3 \})$ has a $(u_2, v_2)$-path on at most 4 vertices, then $C_2$ is the vertex set of this path. Otherwise, $C_2 = \{ u_2, v_2 \}$.
    \item $C_3 = \{ u_3, v_3 \}$.
	\item Subject to (i)-(iv), rearranging the pairs $(u_1, v_1), \dotsc, (u_3, v_3)$ if necessary, as many of the $C_i$ as possible induce connected subgraphs of $H$.
	\item Subject to (i)-(v), rearranging the pairs $(u_1, v_1), \dotsc, (u_3, v_3)$ if necessary, $C$ has as few vertices as possible.
\end{enumerate}
We may again assume, rearranging if necessary, that there is $t \in \{ 0, 1, 2, 3 \}$ such that $H[C_i]$ is connected for all $i \leq t$ and that $|C_i| \leq |C_j|$ whenever $i < j \leq t$.
Note that, for any $x, y \in V(H)$, we have $d(x) + d(y) \geq |H| + 4$ unless $x$ or $y$ has degree 9.
\begin{claim}
$t \geq 1$. 
\end{claim}
\begin{proof}
Let $A_1 = N(u_1) \setminus C$.
We have $|N(u_1) \cap C| \leq | \{ u_0, u_2, v_2, u_3, v_3 \} |$, so $|A_1| \geq d(u_1) - 5 \geq 4$.
Let $a \in A_1$; if $|H| \geq 15$, we may choose $a$ to be a vertex of degree at least 10.
Then \[ |N(a) \cap N(v_1)| = d(a) + d(v_1) - |N(a) \cup N(v_1)| \geq |H| + 3 - (|H| - 2) = 5. \]
Note that $a$ has at most 3 neighbors in $C$: $u_0$ and at least 1 vertex each in $C_2$ and $C_3$ (it cannot be adjacent to $v_1$ or both vertices in $C_2$ or $C_3$. otherwise we get a path of length 3 in $C$).
Then $v_1$ and $a$ have a common neighbor $x \in V(H) \setminus C$, so we get a path $u_1 a x v_1$.
\end{proof}
\begin{claim}
$t \geq 2$. 
\end{claim}
\begin{proof}
Let $A_2 = N(u_2) \setminus C$.
We have $|N(u_2) \cap C| \leq |C_0| + |C_1| + |C_3| \leq 1 + 4 + 2 = 7$, so $A_2 \geq 2$.
Let $a \in A_2$; if $|H| \geq 15$, we may choose $a$ to be a vertex of degree at least 10.
Then \[ |N(a) \cap N(v_2)| = d(a) + d(v_2) - |N(a) \cup N(v_2)| \geq |H| + 3 - (|H| - 2) = 5. \]
If $a$ and $v_2$ have a common neighbor in $H \setminus C$, then we can connect $C_2$ with a path of length 4, so we may assume every common neighbor of $a$ and $v_2$ is in $C$.
Since $a$ has at most 1 neighbor in $C_0$ and at most 1 neighbor in $C_3$ and since $v_2$ has no neighbors in $C_2$, $a$ and $v_2$ must have at least 3 common neighbors in $C_1$.
By Lemma~\ref{l:si}(b), these 3 neighbors must be consecutive; let $x$ be the vertex in the middle of these three.
Then, letting $A$ be the component of $H \setminus (C_0 \cup C_1 \cup C_3)$ that contains $u_2$, we see that $x$ has no neighbor in $A \setminus a$, otherwise we can perform an $(x, a)$-reroute of $C_1$ and $C_2$.
In particular, $u_2$ is not adjacent to $x$, so that $|N(u_2) \cap C| \leq 6$ and thus $A_3 \geq 3$, so we can choose a vertex $a' \in A_2 \setminus a$, again chosen so that $d(a') \geq 10$ if $|H| = 16$.
The same argument then shows that $v_2$ and $a'$ must have 3 common neighbors in $C_1$, but $|C_1| \leq 4$ and $C_1$ has an interior vertex that is not adjacent to $a'$, a contradiction.
\end{proof}
We may now assume $t=2$.
We define $A_3, B_3, A$, and $B$ as in Lemma~\ref{l:si}.
Letting $C' = C_0 \cup C_1 \cup C_2$, we proceed by cases according to the number of vertices in $C'$.
\begin{case}
$|C'| = 5$.
\end{case}
\begin{proof}
We have $|N(u_3) \cap C'| \leq |C'| = 5$ and so $A_3 \geq d(u_3) - 5 \geq 4$.
We thus have $|A| \geq 5$; by symmetry, $|B| \geq 5$.
Since $|H| \leq 16$, we have $|A \cup B| \leq |H| - |C'| \leq 11$, so we may assume $|A| = 5$ exactly.
Then every vertex in $A$ has at most 4 neighbors in $A$ and thus at least 5 neighbors in $C'$; that is, $A$ is complete to $C'$.
But then $A$ is a 5-clique, so taking $A$ together with any 2 consecutive vertices in $C'$ gives us a 7-clique, a contradiction.
\end{proof}
\begin{case}
$|C'| = 6$.
\end{case}
\begin{proof}
We have $|N(u_3) \cap C'| \leq |C'| = 6$ and so $A_3 \geq d(u_3) - 6 \geq 3$.
We thus have $|A| \geq 4$; by symmetry, $|B| \geq 4$.
If $|A| = 4$, then every vertex in $A$ has at most 3 neighbors in $A$, hence at least 6 neighbors in $C'$; that is, $A$ is complete to $C'$.
But then we can take $A$ together with $v$ and any two adjacent vertices in $C' \setminus v$ to get a 7-clique, a contradiction.
Thus $|A| \geq 5$ and $|B| \geq 5$; since $|H| \leq 16$, we must have $|A| = |B| = 5$, so that $|H| = 16$ and thus every vertex in $H$ except for at most two has degree at least 10.
We may then assume that $A$ does not have more vertices of degree 9 than $B$ does.
Then 4 of the 5 vertices of $A$ have at most 4 neighbors in $A$ and at least 6 neighbors in $C'$; that is, $A$ is a 5-clique, and every vertex of $A$ except for at most 1 is complete to $C'$.
We may again take a 4-clique in $A$, together with $v$ and 2 adjacent vertices in $C' \setminus v$, to get a 7-clique, a contradiction.
\end{proof}
\begin{case}
$|C'| = 7$.
\end{case}
\begin{proof}
We have $|N(u_3) \cap C'| \leq |C'| = 7$ and so $A_3 \geq d(u_3) - 7 \geq 2$.
We thus have $|A| \geq 3$; by symmetry, $|B| \geq 3$.
Let $a \in A \setminus u_3$ and $b \in B \setminus v_3$. 
If we can choose vertices $a$ and $b$ such that $d(a) + d(b) \geq |H| + 4$, then, by Lemma~\ref{l:si}(d), we have $\sum_{i=0}^3 s_i \geq 6$.
If we cannot, then either every vertex in $A \setminus u_3$ or every vertex in $B \setminus v_3$ (without loss of generality, the former) has degree 9.
Since the two vertices of degree 9 must be non-adjacent, we can choose $a \in A \setminus u_3$ that has a non-neighbor in $A$, so, by Lemma~\ref{l:si}(d), we still have $\sum_{i=0}^3 s_i \geq 6$.
Then either $s_1 = 3$ or $s_2 = 3$; without loss of generality, the former.
But then $|C_1| = 3$ and $a$ and $b$ are both complete to $C_1$.
This implies that the middle vertex $x$ of $C_1$ has no neighbor in $(A \cup B) \setminus \{ a, b \}$ (otherwise, we can perform an $(x, a)$- or $(x, b)$-reroute of $C_1$ and $C_3$).
But $x$ has at most $|C'| - 1 = 6$ neighbors in $C'$, so, if it has at most 2 neighbors in $A \cup B$, then $d(x) \leq 8 < \delta(H)$, a contradiction.
\end{proof}
\begin{case}
$|C'| = 8$.
\end{case}
\begin{proof}
We must have $|C_1| = 3$ and $|C_2| = 4$.
We have $|N(u_3) \cap C'| \leq |C'| = 8$ and so $A_3 \geq d(u_3) - 8 \geq 1$.
For any $a \in A_3$, $|N(a) \cap C'| \leq 1 + 3 + 3 = 7$, so we get $|A| \geq |N(a) \setminus C'| + 1 \geq 9 - 7 + 1 = 3$.
We thus have $|A| \geq 3$; by symmetry, $|B| \geq 3$.
If $|H| \leq 14$, we must then have $|H| = 14$ and $|A| = |B| = 3$.
Then every vertex in $A$ and in $B$ has at least 7 neighbors in $C'$; in particular, $A \setminus u_3$ and $B \setminus v_3$ are complete to $C_1$.
But then we can perform a reroute of $C_1$ and $C_3$, a contradiction.
Thus $|H| \geq 15$, which implies $H$ has at most 2 vertices of degree 9.
Since $|A| \geq 3$, $A \setminus u_3$ either has a vertex of degree 10 or consists of the two non-adjacent vertices of degree 9; since this vertex has at most 7 neighbors in $C'$, it either has 3 neighbors (if its degree is 10) or 2 neighbors and 1 non-neighbor (if its degree is 9) in $A$, so that $|A| \geq 4$.
By symmetry, $|B| \geq 4$, so, since $|H| \leq 16$, we must have $|A| = |B| = 4$.
Then each of $A \setminus u_3$ and $B \setminus v_3$ must have a vertex of degree 10, say $a$ and $b$, which have at most 3 neighbors in $A$ and $B$, respectively, and thus have 7 neighbors in $C'$; in particular, these vertices are both complete to $C_1$.
But then the middle vertex $x$ of $C_1$ has no neighbor in $(A \cup B) \setminus \{ a, b \}$, otherwise we can perform an $(x, a)$- or $(x, b)$-reroute of $C_1$ and $C_3$.
Of the remaining 4 vertices in $(A \cup B) \setminus \{ u_3, v_3, a, b \}$, at least 2 have degree at least 10; we may assume some $a' \in A \setminus \{ u_3, a \}$ has degree 10.
Then $a'$ has at most 3 neighbors in $A$ and thus at least 7 neighbors in $C'$, so it is complete to $C_1$, a contradiction.
\end{proof}
\begin{case}
$|C'| = 9$.
\end{case}
\begin{proof}
We must have $|C_1| = 4$ and $|C_2| = 4$.
It is now possible that $A_3$ or $B_3$ is empty.
Suppose $A_3$ is empty; then $u_3$ has degree 9 and is complete to $C'$.
It follows that $v_3$ is anticomplete to the interior vertices of $C_1$ and $C_2$, otherwise we can connect $C_3$ with a path of length 3 to get a choice of $C$ with fewer vertices.
Then $v_3$ has at most 5 neighbors in $C'$, so that $|B_3| \geq 4$; thus $|H| \geq |\{ u_3 \}| + |B| + |C'| \geq 15$, so that $H$ has at most 2 vertices of degree 9, including $u_3$.
Note that $|B| \leq |H| - |C' \cup \{ u_3 \}| \leq 6$.
Then every vertex in $B$ that has degree 10 has at most 5 neighbors in $B$ and thus at least 5 neighbors in $C'$.
We observe that no $b \in B \setminus v_3$ can have 3 neighbors on $C_1$ or on $C_2$; if, say, $b$ has 3 neighbors on $C_1$ and $x$ is the vertex in the middle of those 3 neighbors, then $x$ has no neighbor in $B \setminus b$, otherwise we can perform an $(x, b)$-reroute of $C_1$ and $C_3$.
Thus every vertex in $B \setminus v_3$ must have at most 1 neighbor in $C_0$, at most 2 neighbors in $C_1$, and at most 2 neighbors in $C_2$.
Each of these vertices then has at least 5 neighbors in $B$.
Moreover, since $v_3$ also has at most 5 neighbors in $C'$ (the sole vertex of $C_0$ and the endpoints of $C_1$ and $C_2$), it also has 5 neighbors in $B$; since $B \setminus v_3$ has at most 1 vertex of degree 9, $B$ must be a 6-clique.
But then $B \cup \{ v \}$ is a 7-clique, a contradiction.

Thus we may assume $A_3 \neq \varnothing$ and $B_3 \neq \varnothing$.
Every vertex in $A_3$ has at most 7 neighbors in $C'$ and thus at least 2 neighbors in $A$, so that $|A| \geq 3$ and, by symmetry, $|B| \geq 3$.
Then $|A \cup B \cup C'| \geq 15$, so $H$ has at most 2 vertices of degree 9.
Since $|A| \geq 3$, $A \setminus u_3$ either has a vertex of degree 10 or consists of the two non-adjacent vertices of degree 9; since this vertex has at most 7 neighbors in $C'$, it either has 3 neighbors (if its degree is 10) or 2 neighbors and 1 non-neighbor (if its degree is 9) in $A$, so that $|A| \geq 4$.
By symmetry, $|B| \geq 4$, which implies $|A \cup B \cup C'| \geq 17$, a contradiction.
\end{proof}
\end{proof} 

\begin{lemma}
\label{l:app2part2}
Let $H$ be a graph, $v \in V(H)$ such that $H = N[v]$.
Suppose $\delta(H) \geq 10$ and $|H| = n \leq \min \{ 2 \delta(H) - 1, 19 \}$.
Then $H$ has a $(2,2,2,1)$-knitted subgraph.
\end{lemma}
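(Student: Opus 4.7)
The plan is to mimic the template of Lemma~\ref{l:23case} and Lemma~\ref{l:app2part1}. Suppose for contradiction that $H$ has no $(2,2,2,1)$-knitted subgraph, so $H$ contains no $K_7$ and, by Proposition~\ref{p:prop1}, no induced subgraph on at least $7$ vertices in which every non-adjacent pair has at least $8$ common neighbors. Since $H$ itself is not $(2,2,2,1)$-knitted, fix witnesses $u_0, u_1, v_1, u_2, v_2, u_3, v_3$ and build $C = C_0 \cup C_1 \cup C_2 \cup C_3$ exactly as in Lemma~\ref{l:app2part1}: $C_0 = \{u_0\}$, $C_3 = \{u_3, v_3\}$, each $C_i$ for $i \in \{1,2\}$ is either an induced $(u_i, v_i)$-path on at most $4$ vertices or the disconnected pair $\{u_i, v_i\}$, under the standard five minimality conditions. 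Reindex so that $C_1, \ldots, C_t$ are connected with $|C_1| \leq \cdots \leq |C_t|$.

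I would first establish $t \geq 2$ by replaying the ``$t \geq 1$'' and ``$t \geq 2$'' arguments of Lemma~\ref{l:app2part1}. The bounds $|A_i| \geq \delta(H) - 5, \delta(H) - 7$ hold with the improved $\delta(H) \geq 10$, and the common-neighbor estimate $|N(a) \cap N(v_i)| \geq 2\delta(H) - n + 2 \geq 3$ suffices: either a common neighbor outside $C$ gives a shorter $(u_i, v_i)$-path, or Lemma~\ref{l:si}(b) forces three consecutive common neighbors on some $C_j$ and the reroute obstruction prohibits a second vertex of $A_i$ from meeting the middle of this triple. Since at least two such candidates must exist under $\delta(H) \geq 10$, we reach a contradiction. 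The extra unit of slack from $\delta \geq 10$ versus $\delta \geq 9$ removes the need for the degree-$9$ caveat that appears in Lemma~\ref{l:app2part1}.

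Assuming $t = 2$, set $C' = C_0 \cup C_1 \cup C_2$ with $|C'| \in \{5, \ldots, 9\}$, and let $A, B$ be the components of $H - C'$ containing $u_3, v_3$. By Lemma~\ref{l:si}(b) every $a \in A \setminus u_3$ has at most $1 + 3 + 3 = 7$ neighbors in $C'$, so $|A|, |B| \geq \delta(H) - 6 \geq 4$; combined with $|A| + |B| + |C'| \leq 19$ each subcase is tightly constrained. For $|C'| \leq 7$ the surplus is large enough to force $A$ and $B$ to be near-cliques almost complete to $C'$, so $A \cup \{v\}$ together with two adjacent vertices of $C' - v$ contains a $K_7$. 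For $|C'| \in \{8, 9\}$, applying Lemma~\ref{l:si}(d) to suitable $a \in A \setminus u_3$, $b \in B \setminus v_3$ gives $\sum_{i=0}^{3} s_i \geq 2\delta(H) - n + 2 + t_a + t_b \geq 6$ (choosing $a, b$ with enough non-neighbors in their respective components, using the $K_7$-free condition to guarantee such non-neighbors exist); since $s_0 \leq 1$ and $s_3 \leq 0$, some $s_i = 3$ with $i \in \{1, 2\}$, forcing $|C_i| = 3$ and $\{a, b\}$ complete to $C_i$. The reroute obstruction then prevents the middle vertex $x$ of $C_i$ from having any neighbor in $(A \cup B) \setminus \{a, b\}$, whence $d(x) \leq (|C'| - 1) + 2 \leq 10$; chasing equality yields either a degree violation or the forbidden $K_7$.

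The main obstacle is the boundary case $|C'| = 9$ (so $|C_1| = |C_2| = 4$). Here $\delta(H) \geq 10 > |C'|$ actually simplifies the analysis compared to Lemma~\ref{l:app2part1}: since $|N(u_3) \cap C'| \leq 9$ we have $|A_3| \geq 1$ and symmetrically $|B_3| \geq 1$, so the degenerate ``empty $A_3$'' sub-case of Lemma~\ref{l:app2part1} does not arise. The remaining configurations are tightly constrained by $|A|, |B| \geq 4$ and $|A| + |B| \leq 10$; a case analysis parallel to the $|C'| = 9$ portion of Appendix~2, exploiting that interior vertices of $C_1$ and $C_2$ cannot pick up new neighbors in $A \cup B$ without opening a reroute, produces in each configuration either a vertex of degree less than $\delta(H)$ or a $K_7$, finishing the proof.
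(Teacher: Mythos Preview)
Your plan has a real gap right at the start: the ``replay Lemma~\ref{l:app2part1}'' step for $t \geq 1$ and $t \geq 2$ does not survive the change in hypotheses. In Lemma~\ref{l:app2part1} one has $d(a) + d(v_i) \geq |H| + 3$ (using either $|H| \leq 14$ or the degree-9 caveat when $|H| \geq 15$), which yields five common neighbours and hence at least three on $C_1$. Here you only have $d(a) + d(v_i) \geq 2\delta(H) \geq |H| + 1$, so $|N(a) \cap N(v_i)| \geq 3$ as you wrote---but three is not enough. For $t \geq 1$ all three common neighbours can sit in $C_0 \cup C_2 \cup C_3$, giving no path; for $t \geq 2$ they can be one each in $C_0, C_1, C_3$, so you never force three consecutive vertices on $C_1$. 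The paper handles $t \geq 1$ with a different argument (find non-adjacent $b, b' \in B_1$ and use $|N(u_1) \cap N(b)| \geq 2\delta - (|H|-3) \geq 4$ to force $b$ complete to some pair $\{u_j, v_j\}$). For $t \geq 2$ the paper \emph{changes the definition of $C$}: it allows $|C_2| \leq 5$, not $\leq 4$. This is essential, because when $|C_1| = 4$ the argument produces a reroute that turns $C_2$ into a five-vertex path; under your ``exactly as in Lemma~\ref{l:app2part1}'' setup this reroute contradicts nothing. Allowing $|C_2| = 5$ also creates a $|C'| = 10$ case that you do not treat.

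Your case-analysis sketch has a second gap. You claim you can choose $a, b$ with $t_a + t_b \geq 3$ from the $K_7$-free condition, giving $\sum s_i \geq 6$. But $K_7$-freeness only guarantees $A$ is not a $6$-clique, so $\Delta(\overline{H}[A])$ could be $1$; you then get only $t_a + t_b = 2$ and $\sum s_i \geq 5$. In the $|C'| = 9$ subcase (your setup forces $|C_1| = |C_2| = 4$) Lemma~\ref{l:si}(c) gives $s_1, s_2 \leq 2$, so the conclusion ``some $s_i = 3$ forcing $|C_i| = 3$'' is impossible there regardless. The paper's treatment of $|C'| \in \{8, 9, 10\}$ is accordingly much longer and does not proceed via a single $s_i = 3$ contradiction; it tracks which triples on $C_1, C_2$ are hit by $A$ versus $B$, uses repeated reroute obstructions to strip neighbours off interior vertices, and finishes each branch with either a degree count or an explicit $7$-clique.
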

\begin{proof}
Suppose $H$ has no $(2,2,2,1)$-knitted subgraph.
This implies that $H$ has no 7-clique. 
Since $H$ itself is not $(2,2,2,1)$-knitted, we will define $u_0, u_1, v_1, u_2, v_2, u_3, v_3$ as in the proof of Lemma~\ref{l:23case}, and define $C$ and $t$ as in the proof of Lemma~\ref{l:app2part1} except that we can allow $C_2$ to be a path on 5 vertices if no appropriate path on at most 4 vertices exists.
\begin{claim}
$t \geq 1$. 
\end{claim}
\begin{proof}
Suppose to the contrary that $t = 0$. 
Let $B_1 = N(v_1) - (C_0 \cup C_2 \cup C_3)$.
Then \[ |B_1| \geq \delta(H) - |C_0| - |C_2| - |C_3| = \delta(H) - 5 \geq 5. \]
Since $H[C_1]$ is disconnected, 
by minimality of $C$, $u_1$ can have no neighbor in $B_1 \cup \{ v \}$, and if any vertex of $B_1$ has a common neighbor with $u_1$, that common neighbor must belong to $C_0 \cup C_2 \cup C_3$.
If $B_1$ is a clique, then $B_1 \cup \{ v_1, v \}$ is a clique of order 7, a contradiction, so there exist $b, b' \in B_1$ that are not adjacent.
Then we have $N(u_1) \cup N(b) \subseteq V(H) - \{ u_1, b, b' \}$ and so \[ |N(u_1) \cap N(b)| = d(u_1) + d(b) - |N(u_1) \cup N(b)| \geq 2\delta(H) - (|H| - 3) \geq 4. \]
Since $N(u_1) \cap N(b) \subseteq \{ u_0, u_2, v_2, u_3, v_3 \}$ and $b$ has at least $|N(u_1) \cap N(b)| \geq 4$ neighbors in this set, $b$ must be complete to either $\{ u_2, v_2 \}$ or $\{ u_3, v_3 \}$, which implies that we can connect $C_2$ or $C_3$ using a path on 3 vertices, a contradiction.
\end{proof}

\begin{claim}
$t \geq 2$. 
\end{claim}
\begin{proof}
Suppose to the contrary that $t = 1$. 
Let $A_2 = N(u_2) - (C_0 \cup C_1 \cup C_3)$ and $B_2 = N(v_2) - (C_0 \cup C_1 \cup C_3)$.
Then we have \[ |N(u_2) \cap (C_0 \cup C_1 \cup C_3)| \leq |C_0 \cup C_1 \cup C_3| \leq 7, \] so $|A_2| \geq \delta(H) - 7 \geq 3$ and likewise $|B_2| \geq 3$.
By Lemma~\ref{l:si}(d), for any $a \in A_2$ and $b \in B_2$, we have \[ \sum_{i=0}^3 s_i \geq 3, \] where $s_0 \leq 1$ and $s_3 \leq 0$.
We also have $s_2 \leq 0$ by part (a) of Lemma~\ref{l:si}, 
so we must have $s_1 \geq 2$.
If $|C_1| = 4$, then this implies that every vertex in $A_2 \cup B_2$ has exactly 3 neighbors in $C_1$: by Lemma~\ref{l:si}(b), these 3 neighbors must be consecutive, so, if we write $C_1 = u_1 x y v_1$, then each vertex in $A_2 \cup B_2$ is complete to either $\{ u_1, x, y \}$ or $\{ x, y, v_1 \}$.
But this is impossible: since every vertex in $A_2 \cup B_2$ is necessarily complete to $\{ x, y \}$, if any $w \in A_2 \cup B_2$ is adjacent to $u_1$, we can perform an $(x, w)$-reroute of $C_1$ and $C_2$, and if $w$ is adjacent to $v_1$ instead, then we can perform a $(y, w)$-reroute of $C_1$ and $C_2$, with $C_2$ ending up as a 5-vertex path in either case.
Thus $|C_1| \leq 3$, which implies $|A_2| \geq \delta(H) - 6 \geq 4$ and likewise $|B_2| \geq 4$.
It follows from Lemma~\ref{l:si2}(c) that $A_2 \cup B_2$ is complete to $\{ u_1, v_1 \}$, and, if $|C_1| = 3$, it follows from the definition of $s_1$ that either every vertex from $A_2$ or every vertex from $B_2$ (without loss of generality, the former) is complete to $C_1$.
If, in the case where $|C_1| = 3$, any vertex from $B_2$ is complete to $C_1$ as well, then, if we label the middle vertex of $C_1$ as $x$, we can take any $a \in A_2$ and perform an $(x, a)$-reroute of $C_1$ and $C_2$, a contradiction.
Thus we cannot have $s_1 = 2$ for any choice of vertices in $A_2$ and $B_2$, so we have $\sum_{i=0}^3 s_i \leq 3$ and thus $\sum_{i=0}^3 s_i = 3$.
Lemma~\ref{l:si}(d) then implies that, for any $a \in A_2$ and $b \in B_2$, $|A_2 - N[a]| = |B_2 - N[b]| = 0$.
That is, $A_2$ and $B_2$ are cliques.
Now $C_1$ contains a $K_2$ that is complete to $A_2$, and $A_2$ is a clique on at least 4 vertices.
If this $K_2$ is complete to $u_2$ as well, then we get a 7-clique, which is impossible.
This implies $u_2$ has a non-neighbor in $C_1$, so that $|N(u_2) \cap (C_0 \cup C_1 \cup C_3)| \leq |C_0 \cup C_1 \cup C_3| - 1 \leq 5$ and thus $|A_2| \geq \delta(H) - 5 \geq 5$.
Then $A_2$ together with that $K_2$ is a 7-clique, a contradiction.
\end{proof}

Now we may assume $t=2$, so that $C_3$ is disconnected.
Let $A_3 = N(u_3) - C$ and $B_3 = N(v_3) - C$.

\begin{claim}
$A_3 \neq \varnothing$ and $B_3 \neq \varnothing$.
\end{claim}
\begin{proof}
Suppose not; without loss of generality, $A_3 = \varnothing$.
Then $N(u_3) \subseteq C_0 \cup C_1 \cup C_2$; since $|N(u_3)| \geq \delta(H) \geq 10$ and $|C_0 \cup C_1 \cup C_2| \leq 10$, we must have $|C_0 \cup C_1 \cup C_2| = 10$ (i.e., $|C_1| = 4$ and $|C_2| = 5$) with $u_3$ being complete to $C_0 \cup C_1 \cup C_2$.
Note that no interior vertex of $C_1$ or $C_2$ is adjacent to $v_3$, otherwise we can use that interior vertex to replace $C_3$ with a path of length 3, a shorter path than the one that we just disconnected, contrary to the minimality of $C$.
Write $C_2 = u_2 x_1 y x_2 v_2$.
Note that we have \[ |N(v_3) \cap N(y)| = d(v_3) + d(y) - |N(v_3) \cup N(y)| \geq 2\delta(H) - (|H| - 2) \geq 3. \]
Moreover, because $H - [N(v_3) \cup N(x_1)] \supseteq \{ v_3, x_1, x_2 \}$, we have \[ |N(v_3) \cap N(x_1)| \geq 2\delta(H) - (|H| - 3) \geq 4, \] and likewise $|N(v_3) \cap N(x_2)| \geq 4$.
Then each of $y, x_1, x_2$ has 3 neighbors outside of $C_2$ that are also neighbors of $v_3$ (of the four vertices in $N(v_3) \cap N(x_i)$, one is an endpoint of $C_2$).
None of these common neighbors can belong to $H - C$; otherwise, we could replace $C_3$ with a path of length 4 ($u_3$, an interior vertex of $C_2$, a common neighbor of that vertex and $v_3$, and $v_3$ itself) and replace $C_2$ with $\{ u_2, v_2 \}$ to get a choice of $C$ with fewer vertices, a contradiction.
Thus the three common neighbors for each pair must be the neighbors of $v_3$ in $C - C_2$, namely $u_0, u_1$, and $v_1$.
Since these are the only possible common neighbors, the inequalities we have above must be equalities: we have $|N(v_3) \cup N(y)| = |H| - 2$ and $|N(v_3) \cup N(x_i)| = |H| - 3$ for each $i \in [2]$, so that $N(v_3) \cup N(y) = H - \{ v_3, y \}$ and $N(v_3) \cup N(x_1) = N(v_3) \cup N(x_1) = H - \{ v_3, x_1, x_2 \}$.
That is, every vertex in $H - \{ v_3, x_1, y, x_2 \}$ is either adjacent to $v_3$ or complete to $\{ x_1, y, x_2 \}$; in particular, since the interior vertices of $C_1$ are anticomplete to $v_3$, they must be complete to $\{ x_1, y, x_2 \}$.
But then, if we write $C_1 = u_1 z_1 z_2 v_1$, we can replace $C_1$ with $u_1 y v_1$ and replace $C_2$ with $u_2 x_1 z_1 x_2 v_2$ to get a choice of $C$ with fewer vertices, a contradiction.
\end{proof}

\begin{claim}
$t=3$.
\end{claim}
\begin{proof}
Let $C' = C_0 \cup C_1 \cup C_2$.
For any $a \in A_3$, by Lemma~\ref{l:si}(b), we have $|N(a) \cap C_i| \leq 3$ for $i \in [2]$ and $|N(a) \cap C_0| \leq |C_0| = 1$, so that $|N(a) - C'| \geq \delta(H) - 1 - 3 - 3 \geq 3$.
That is, if we define $A$ and $B$ as in Lemma~\ref{l:si}, we have $|A| \geq 4$, and, by symmetry, $|B| \geq 4$.
Note that, since every $(A, B)$-path must pass through $C'$, the vertex $v$ that is complete to every other vertex in the graph must belong to $C'$; specifically, it must be either the sole vertex of $C_0$, an endpoint of a path $C_i$ such that $|C_i| = 2$, or the middle vertex of a path $C_i$ such that $|C_i| = 3$ (otherwise, there would be a vertex on $C_i$ that is adjacent to but not consecutive with $v$, meaning we would have a choice of $C_i$ with fewer vertices).
Observing that $|C_0| = 1$, $2 \leq |C_1| \leq 4$, and $2 \leq |C_1| \leq 5$, we proceed by cases according to the number of vertices in $C'$.
Throughout these cases, we will define $A_0 = \{ a \in A : A - N[a] \neq \varnothing \}$ and $B_0 = \{ b \in B : B - N[b] \neq \varnothing \}$.
\begin{case}
$|C'| = 5$.
\end{case}
\begin{proof}
Every vertex in $A$ has at most 5 neighbors in $C'$ and thus at least $\delta(H) - 5 \geq 5$ neighbors in $A$, so $|A| \geq 6$.
If $|A| = 6$, then, since $\delta(H[A]) \geq 5$, $A$ would be a 6-clique and so $A \cup \{ v \}$ would be a 7-clique, a contradiction, so we must have $|A| \geq 7$.
By symmetry, $|B| \geq 7$; since $|H| \leq 19 = 5 + 7 + 7$, we must have $|A| = |B| = 7$ exactly.
Then $A$ is a $K_7$ with a matching deleted.
More specifically, since $A$ contains no 6-clique, we must have $|A_0| \geq 4$.
Note that every vertex in $A_0$ is complete to $C'$ and every vertex in $A - A_0$ is adjacent to all but at most one vertex in $C'$.
If $|A_0| = 4$, then, since $|A - A_0| = 3$ and there are 4 vertices in $C' - v$, by the pigeonhole principle, some vertex $w \in C' - v$ must be complete to $A - A_0$ and thus complete to $A$.
Since $A$ contains a 5-clique, $A \cup \{ v, w \}$ contains a 7-clique, a contradiction.
Thus $|A_0| = 6$.
Let $w_1$ and $w_2$ be two adjacent vertices in $C'$ such that $v \notin \{ w_1, w_2 \}$, and let $a$ be the sole vertex of $A - A_0$; we may assume $a$ is adjacent to $w_2$.
We claim that $H[A \cup \{ w_1, w_2, v \}$ is $(2,2,2,1)$-knitted.
The non-adjacent pairs in this graph consist of three pairs of vertices in $A_0$ and possibly the pair $\{ a, w_1 \}$.
Since the complement of this graph has maximum degree 1, each non-adjacent pair has every other vertex in the graph as a common neighbor, for a total of $|A| + 3 - 2 = 8$ common neighbors.
Thus, by Corollary~\ref{c:common}, this graph is indeed $(2,2,2,1)$-knitted.
\end{proof}
\begin{case}
$|C'| = 6$.
\end{case}
\begin{proof}
We have $|C_1| = 2$ and $|C_2| = 3$, and every vertex in $A$ has at most 6 neighbors in $C'$ and thus at least $\delta(H) - 6 \geq 4$ neighbors in $A$.
Then $|A| \geq 5$; if $|A| = 5$, $A$ would be a 5-clique that was complete to $C'$, so, for any $w \in C' - v$, $A \cup \{ v, w \}$ would be a 7-clique, a contradiction, so we must have $|A| \geq 6$.
Since $A \cup \{ v \}$ is not a 7-clique, $A$ is not a 6-clique, so, since $\delta(H[A]) \geq 4$, $A$ is a $K_6$ with a matching deleted, and $|A_0| \geq 2$.
Note that every vertex in $A_0$ is complete to $C'$, and every vertex in $A - A_0$ is adjacent to every vertex in $C'$ except for at most one.

If $|A_0| = 2$, then each of the 4 vertices of $A - A_0$ is adjacent to 4 of the 5 vertices of $C' - v$, so, by the pigeonhole principle, there is $w \in C' - v$ that is complete to $A - A_0$ and thus complete to $A$.
Since $A$ contains a 5-clique, $A \cup \{ w, v \}$ contains a 7-clique, a contradiction.

If $|A_0| = 4$, we want to consider the graph $H[A \cup C_2 \cup \{ v \}]$.
We claim that $v \notin C_2$.
We know that $v \notin \{ u_2, v_2 \}$, otherwise we could replace $C_2$ with $u_2 v_2$ to get a choice of $C$ with fewer vertices.
For any $a, a' \in A_0$, $\{ a, a' \}$ is complete to $C_2$, so, if $v$ is the middle vertex of $C_2$, we could perform a $(v, a)$-reroute of $C_2$ and $C_3$, which is impossible.
Thus the graph $H[A \cup C_2 \cup \{ v \}]$ has 10 vertices; we claim that this graph is $(2,2,2,1)$-knitted.
The non-adjacent pairs in this graph are two pairs of vertices in $A_0$, the pair $\{ u_2, v_2 \}$, and up to two pairs that have one end in $A - A_0$ and one end in $C_2$.
The pair $\{ u_2, v_2 \}$ has 6 common neighbors in this graph: $v$, the middle vertex of $C_2$, and the four vertices in $A_0$.
Every non-adjacent pair with one end in $A - A_0$ and one end in $C_2$ has at least 7 common neighbors: the five other vertices in $A$, $v$, and at least one vertex in $C_2$ (recall that every vertex in $A - A_0$ has at most one non-neighbor in $C'$, so if it has one non-neighbor in $C_2$, it is adjacent to the other two vertices in $C_2$).
Every non-adjacent pair in $A_0$ has 8 common neighbors: every other vertex in the graph is a common neighbor.
If we choose three pairs of non-adjacent vertices in this graph, then the three vertices in $C_2$ can contribute to at most two of these pairs: either $\{ u_2, v_2 \}$ is one of the pairs, leaving only one vertex remaining in $C_2$, or else every pair with an end in $C_2$ has its other end in $A - A_0$, so there are at most two pairs.
We then have at most 2 pairs with at most 7 common neighbors, at most 1 of which has only 6 common neighbors, so, by Proposition~\ref{p:prop1}, this graph is $(2,2,2,1)$-knitted.

Thus we may assume $|A_0| = 6$, so that $A_0 = A$.
We now claim that the graph $H[A \cup C']$ is $(2,2,2,1)$-knitted.
Since $A = A_0$ is complete to $C'$, the non-adjacent pairs in this graph are 3 pairs of vertices in $A$ and some number of pairs of vertices in $C'$.
Every non-adjacent pair in $A$ has every other vertex in the graph, of which there are $|A| + |C'| - 2 = 10$, as common neighbors.
Every non-adjacent pair in $C'$ has at least 7 common neighbors: 6 vertices in $A$ as well as $v$.
Since $v$ is not part of any non-adjacent pair and $|C' - v| = 5$, if we choose three pairs of non-adjacent vertices in this graph, then at most two of these pairs have both ends in $C'$, so we have at most two pairs with 7 common neighbors, with every other pair having at least 10 common neighbors.
Thus, by Proposition~\ref{p:prop1}, this graph is $(2,2,2,1)$-knitted, a contradiction.
\end{proof}
\begin{case}
$|C'| = 7$.
\end{case}
\begin{proof}
Suppose $|C_1| = 2$, so that $|C_2| = 4$.
Then every vertex in $A - u_3$ has at most 1 neighbor in $C_0$, at most 2 neighbors in $C_1$, and at most 3 neighbors in $C_2$ by Lemma~\ref{l:si}(b), so every such vertex has at least $\delta(H) - 6 \geq 4$ neighbors in $A$.
Thus $|A| \geq 5$, and, by symmetry, $|B| \geq 5$.
If $|A| = 5$, then every vertex in $A$ except for at most 1 has 4 neighbors in $A$ and is thus complete to the rest of $A$, so $A$ must be a 5-clique.
Then every vertex in $A$ has exactly 4 neighbors in $A$ and thus exactly 6 neighbors in $C'$: every vertex in $A - u_3$ must then be complete to $C_0 \cup C_1$.
The vertex $u_3$ has at most one non-neighbor in $C'$, so it has at least two neighbors among the three vertices in $C_0 \cup C_1$, which means there is a vertex $w \in (C_0 \cup C_1) - v$ that is adjacent to $u_3$.
Then $A \cup \{ v, w \}$ is a 7-clique, a contradiction.
Therefore, $|A| \geq 6$, and, by symmetry, $|B| = 6$; since $|H| \leq 19$, we must have $|A| = |B| = 6$.
Since $v$ is complete to $A \cup B$, neither $A$ nor $B$ can be a 6-clique, so we can find $a, a' \in A$ and $b, b' \in B$ such that $aa', bb' \notin E(H)$; we may assume $a \neq u_3$ and $b \neq v_3$.
Applying Lemma~\ref{l:si}(d) to $a$ and $b$, we get $\sum_{i=0}^3 s_i \geq 5$.
Since $s_0 \leq |C_0| = 1$, $s_1 \leq |C_1| = 2$, and $s_3 \leq 0$ because $H[C_3]$ is disconnected, we have $s_2 \geq 2$.
For this to be possible, each of $a$ and $b$ must have exactly 3 neighbors in $C_2$.
Let $x$ be the interior vertex of $C_2$ that is in the middle of the three neighbors of $a$ on $C_2$; then $b$ is adjacent to $x$ as well, so $x$ can have no other neighbors in $A$, otherwise we can perform an $(x, a)$-reroute of $C_2$.
Thus no vertex of $A - a$ can have 3 consecutive neighbors on $C_2$, which implies that every vertex of $A - \{ a, u_3 \}$ has at most 2 neighbors on $C_2$ and thus has at most 5 total neighbors in $C'$.
Each of these neighbors then has 5 neighbors in $A$, making it complete to the rest of $A$; that is, the only possible pair of non-adjacent vertices in $A$ is the pair $\{ a, u_3 \}$.
Each of $a$ and $u_3$ then has 4 neighbors in $A$ and at most 3 neighbors in $C_2$, and each vertex of $A - \{ a, u_3 \}$ has 5 neighbors in $A$ and at most 2 neighbors in $C_2$, so that every vertex of $A$ has three neighbors in $C_0 \cup C_1$: that is, $A$ is complete to $C_0 \cup C_1$.
But then $A$ contains a 5-clique, so $A \cup C_1$ contains a 7-clique, a contradiction.

Thus $|C_1| \neq 2$; we must then have $|C_1| = |C_2| = 3$.
Every vertex in $A$ has at most 7 neighbors in $C'$, so $\delta(H[A]) \geq 3$ and thus $|A| \geq 4$.
If $|A| = 4$, then $A$ is a 4-clique and every vertex in $A$ has exactly 7 neighbors in $C'$; letting $w_1$ and $w_2$ be any two adjacent vertices in $C' - v$, the set $A \cup \{ v, w_1, w_2 \}$ must then be a 7-clique, a contradiction.
Thus $|A| \geq 5$, and, by symmetry, $|B| \geq 5$.

Suppose $|A| = 5$.
Then every vertex in $A$ has at most 4 neighbors in $A$, hence at least 6 neighbors in $C'$; in particular, every vertex of $A$ is complete to either $C_1$ or $C_2$.
We claim that $A$ has more than one vertex that is complete to $C_1$ and more than one vertex that is complete to $C_2$.
If not---if, say, $A$ has at most one vertex that is complete to $C_2$---choose $a \in A$ such that no vertex in $A - a$ is complete to $C_2$.
Then every vertex in $A - a$ has at most 6 neighbors in $C'$, hence at least 4 neighbors in $A$; that is, $A$ is a 5-clique, and at most one vertex of this 5-clique is not complete to $C_1$.
Note that the middle vertex of $C_1$ has no neighbor in $B$ (otherwise, we can reroute $C_1$ using any vertex in $A - u_3$), so $v \notin C_1$.
But then $A - a$, together with any two adjacent vertices on $C_1$ and with $v$, gives us a 7-clique.

Now at least two vertices of $A$, including some $a \in A - u_3$, are complete to $C_1$, and at least two vertices of $A$, including some $a' \in A - u_3$, are complete to $C_2$.
If any vertex of $B$ is adjacent to the middle vertex of $C_1$, we can reroute $C_1$ through $a$, and if any vertex of $B$ is adjacent to the middle vertex of $C_2$, we can reroute $C_2$ through $a'$.
Thus every vertex in $B$ has at most 1 neighbor in $C_0$, at most 2 neighbors in $C_1$, and at most 2 neighbors in $C_2$, for a total of at most 5 neighbors in $C'$.
Then every vertex in $B$ has at least 5 neighbors in $C'$; if $|B| = 6$, then $B$ is a 6-clique and so $B \cup \{ v \}$ is a 7-clique, a contradiction, so we must have $|B| \geq 7$, and the complement of $B$ is a matching, with every vertex in $B_0$ being complete to $\{ u_0, u_1, v_1, u_2, v_2 \}$.
In fact, since $u_1 v_1, u_2 v_2 \notin E(H)$ and since the middle vertices of $C_1$ and $C_2$ have no neighbor in $B$, it must be the case that $u_0 = v$.
If $|B_0| \leq 2$, then $B$ contains a 6-clique, so $B \cup \{ v \}$ contains a 7-clique, a contradiction.
We then have $|B_0| \geq 4$.
Every vertex in $B - B_0$ has 6 neighbors in $B$ and thus at least 4 neighbors in $C'$, so it has at least 3 neighbors in $\{ u_1, v_1, u_2, v_2 \}$.
We claim that there is a pair $\{ u_i, v_i \}$, $i \in [2]$, such that one of the two vertices in the pair is complete to $B - B_0$ and the other has at most 1 non-neighbor in $B - B_0$.
Since $|B - B_0| \leq 3$, at least one vertex in $\{ u_1, v_1, u_2, v_2 \}$ (without loss of generality, $u_1$) is complete to $B - B_0$.
If $v_1$ has at most 1 non-neighbor in $B - B_0$, then $\{ u_1, v_1 \}$ is our desired pair; if not, then there is at most one vertex in $B - B_0$ that is adjacent to $v_1$, and this vertex is the only vertex in $B - B_0$ that can have a non-neighbor in $\{ u_2, v_2 \}$, so $\{ u_2, v_2 \}$ is our desired pair.
Assume without loss of generality that $u_1$ is complete to $B - B_0$ and there is a vertex $b \in B - B_0$ such that $v_1$ is complete to $B - (B_0 \cup \{ b \})$.
Consider the graph $H[B \cup \{ u_1, v_1, v \}]$.
The non-adjacent pairs in this graph are two pairs that include $v_1$ (namely $\{ u_1, v_1 \}$ and $\{ v_1, b \}$), each of which has $|B| + 3 - 3 = 7$ common neighbors, and up to three pairs with both ends in $B_0$, each of which has $|B| + 3 - 2 = 8$ common neighbors.
By Corollary~\ref{c:uncommon}, this graph is $(2,2,2,1)$-knitted.

Now we have $|A| \geq 6$, and, by symmetry, $|B| \geq 6$; since $|A \cup B| \leq |H| - |C'| \leq 19 - 7 = 12$, we must have $|A| = |B| = 6$.
Since $v$ is complete to $A \cup B$, neither $A$ nor $B$ is a 6-clique, so $|A_0| \geq 2$ and $|B_0| \geq 2$.
We claim that there is $i \in [2]$ such that either two vertices of $A$ or two vertices of $B$ are complete to $C_i$.
Let $a \in A_0 - u_3$.
Then $a$ has at most 4 neighbors in $A$ and thus at least 6 neighbors in $C'$, so $a$ is complete to $C_1$ or $C_2$ (without loss of generality, the former).
Likewise, any $b \in B_0 - v_3$ must be complete to $C_1$ or $C_2$.
If $b$ is complete to $C_1$, then the middle vertex $x_1$ of $C_1$ has no neighbor in $(A \cup B) - \{ a, b \}$, otherwise we can perform an $(x_1, a)$- or $(x_1, b)$-reroute of $C_1$ and $C_3$.
Thus $b$ is complete to $C_2$.
Each of $u_3$ and $v_3$ also has at least 6 neighbors in $C'$ and is thus complete to $C_1$ or $C_2$.
If $u_3$ is complete to $C_1$ or $v_3$ is complete to $C_2$, we are done, so we may assume $u_3$ is complete to $C_2$, $v_3$ is complete to $C_1$, and no vertex from $(A \cup B) - \{ u_3, v_3, a, b \}$ is complete to $C_1$ or to $C_2$.
Then every vertex in $A - \{ u_3, a \}$ has at most 5 neighbors in $C_1$, hence at least 5 neighbors in $A$; that is, every vertex in $A - \{ u_3, a \}$ is adjacent to every other vertex in $A$ and has exactly 5 neighbors in $C'$ (1 in $C_0$, 2 in $C_1$, 2 in $C_2$).
Note that no vertex in $A - u_3$ is adjacent to the middle vertex $x_1$ of $C_1$, otherwise we can perform an $(x_1, a)$-reroute of $C_1$.
This implies that $A - u_3$ is complete to $\{ u_1, v_1 \}$.
But then, for any $a' \in A - u_3$, we can perform an $(x_1, a')$-reroute of $C_1$.

Now we have some $C_i$ that is complete to either two vertices in $A$ or two vertices in $B$; we may assume $C_1$ is complete to two vertices in $A$.
Let $a \in A - u_3$ be complete to $C_1$.
Then the middle vertex $x_1$ of $C_1$ has no neighbor in $B$, so every vertex of $B_0$ (including $v_3$) must be complete to $C_2 \cup \{ u_0, u_1, v_1 \}$.
This, in turn, implies that the middle vertex $x_2$ of $C_2$ has no neighbor in $A$, so it must be the case that $u_0 = v$.
Every vertex of $B - B_0$ has at least 5 neighbors in $C'$, which necessarily include $v$ but not $x_1$, so each one is adjacent to at least 4 of the 5 vertices in $C_2 \cup \{ u_1, v_1 \}$.
Since $|B - B_0| \leq 4$, this implies that some vertex in $C_2 \cup \{ u_1, v_1 \}$ is complete to $B - B_0$ and thus complete to $B$.
If $|B_0| = 2$, then $B$ contains a 5-clique, so taking a vertex in $C_2 \cup \{ u_1, v_1 \}$ that is complete to $B$, together with $v$ and the 5-clique in $B$, gives us a 7-clique, a contradiction.
Thus, since no vertex in $B$ can have more than 1 non-neighbor in $B$ (each vertex of $B$ has at most 6 neighbors in $C'$ and thus at least 4 neighbors in $B$), we must have $|B_0| \in \{ 4, 6 \}$.
Consider the graph $H[B \cup C_2 \cup \{ v \}]$.
This graph has 10 vertices, and the non-adjacent pairs of vertices in this graph consist of 2 or 3 pairs of vertices in $B_0$, the pair $\{ u_2, v_2 \}$, and up to 2 pairs with one end in $B - B_0$ and the other end in $C_2$.
If $|B_0| = 6$, then $B - B_0$ is empty, so every vertex in this graph has at most 1 non-neighbor, which means that every non-adjacent pair has 8 common neighbors.
Thus, by Corollary~\ref{c:common}, this graph is $(2,2,2,1)$-knitted if $|B_0| = 6$, so we must have $|B_0| = 4$.
If the 2 vertices of $B - B_0$ are complete to 2 adjacent vertices in $C_2$, then those 2 vertices, together with a 4-clique in $B$ and with $v$, give us a 7-clique, a contradiction, so the 2 vertices of $B - B_0$ are either anticomplete to $x_2$ or else each one has a different non-neighbor in $C_2$.
Either way, each non-adjacent pair in $B_0$ has 8 common neighbors.
If the 2 vertices of $B - B_0$ are anticomplete to $x_2$, then any non-adjacent pair including $x_2$ has 7 common neighbors (every vertex in the graph except for $x_2$ and its 2 non-neighbors) and the pair $\{ u_2, v_2 \}$ has 8 common neighbors, so the graph is $(2,2,2,1)$-knitted by Corollary~\ref{c:uncommon}.
Thus we may assume that each of the 2 vertices of $B - B_0$ has a different non-neighbor in $C_2$: we will write $B - B_0 = \{ b_1, b_2 \}$ and assume without loss of generality that $b_1$ is not adjacent to $u_2$.
Then the non-neighbor of $b_2$ is either $x_2$ or $v_2$.
If the non-neighbor of $b_2$ is $x_2$, then each non-adjacent pair including $u_2$ has 7 common neighbors (every vertex except $u_2$, $b_1$, and $v_2$), and each other non-adjacent pair (the two pairs in $B_0$ as well as $\{ v_2, x_2 \}$) has 8 common neighbors, so the graph is $(2,2,2,1)$-knitted by Corollary~\ref{c:uncommon}.
If the non-neighbor of $b_2$ is $v_2$, then the pair $\{ u_2, v_2 \}$ has 6 common neighbors (the 4 vertices in $B_0$ as well as $x_2$ and $v$), the pair $\{ b_1, u_2 \}$ has 7 common neighbors (the 4 vertices in $B_0$ as well as $x_2, v$, and $b_2$), the pair $\{ b_2, v_2 \}$ has 7 common neighbors as well, and every pair in $B_0$ has 8 common neighbors.
So, given any three disjoint pairs of non-adjacent vertices in this graph, the given pairs can include the pair with 6 common neighbors or at least one of the pairs with 7 common neighbors, but cannot include all three since they overlap.
With all other pairs having 8 common neighbors, Proposition~\ref{p:prop1} shows that this graph is $(2,2,2,1)$-knitted.
\end{proof}
\begin{case}
$|C'| = 8$.
\end{case}
\begin{proof}
Suppose $|C_1| = 2$; then $|C_2| = 5$.
Every vertex in $A - u_3$ then has at most 6 neighbors in $C'$, hence at least 4 neighbors in $A$, so $|A| \geq 5$, and, by symmetry, $|B| \geq 5$.
We have $|A \cup B| \leq |H| - |C'| \leq 19 - 8 = 11$, so we must have $\min\{ |A|, |B| \} = 5$; without loss of generality, $|A| = 5$.
But then every vertex in $A$ has at most 4 neighbors in $A$ and thus at least 6 neighbors in $C'$: in particular, each of the four vertices of $A - u_3$ is complete to $C_0 \cup C_1$.
If there is $w \in (C_0 \cup C_1) - v$ that is adjacent to $u_3$, then $A \cup \{ v, w \}$ is a 7-clique, so $u_3$ must have at most 1 neighbor in $C_0 \cup C_1$, which implies that $u_3$ must be complete to $C_2$.
Then $v_3$ is not adjacent to any interior vertex of $C_2$, otherwise we can use that interior vertex to turn $C_3$ into a path on 3 vertices, fewer vertices than $C_2$, contrary to the minimality of $C$.
Thus $v_3$ has at most 2 neighbors in $C_2$, which implies that it has at most 5 neighbors in $C'$ and thus at least 5 neighbors in $B$; that is, $|B| = 6$ and every vertex in $B - v_3$ is adjacent to $v_3$.
But then every vertex of $B - v_3$ has at most 5 neighbors in $B$ and at most 3 neighbors in $C_0 \cup C_1$, so it has at least 2 neighbors in $C_2$; by Lemma~\ref{l:si}(b), these 2 neighbors cannot be the endpoints of $C_2$, so one of them is an interior vertex of $C_2$, which means we can connect $u_3$ to $v_3$ with a path of length 4, again contradicting the minimality of $C$.

Thus we must have $|C_1| \geq 3$, which implies $|C_1| = 3$ and $|C_2| = 4$.
Now every vertex in $A - u_3$ has at most 7 neighbors in $C'$, hence at least 3 neighbors in $A$, so that $|A| \geq 4$, and, if $|A| = 4$, $A$ is a 4-clique.
In that case, every vertex of $A - u_3$ is complete to $C_0 \cup C_1$; the middle vertex of $C_1$ then has no neighbor in $B$ (otherwise, we could reroute $C_1$ using any vertex in $A - u_3$), so $v \notin C_1$, which implies that $v$ is the sole vertex of $C_0$.
Moreover, every vertex of $A - u_3$ has 3 consecutive neighbors in $C_2$ and thus is complete to the middle 2 vertices of $C_2$.
Since $u_3$ also has at most 3 neighbors in $A$ and thus at least 7 neighbors in $C'$, it is either complete to the middle 2 vertices of $C_2$ or else it is complete to $C_1$.
Either way, there are two adjacent vertices in $C_1$ or in $C_2$ that are complete to $A$, and these six vertices, together with $v$, give us a 7-clique.
Thus $|A| \geq 5$, and, by symmetry, $|B| \geq 5$.
We have $|A \cup B| \leq |H| - |C'| \leq 19 - 8 = 11$, so we must have $\min\{ |A|, |B| \} = 5$; without loss of generality, $|A| = 5$.

Suppose $A$ is a 5-clique. 
Then every vertex in $A$ has at most 4 neighbors in $A$ and thus at least 6 neighbors in $C'$, so it must have 3 consecutive neighbors in either $C_1$ or $C_2$.
By the pigeonhole principle, there is a set of 3 consecutive vertices in $C_1$ or in $C_2$ that is complete to 2 vertices in $A$.
The middle vertex $x$ of this set of 3 vertices then has no neighbor in $B$; otherwise, taking $a \in A - u_3$ that is complete to the set of 3 vertices, we can perform an $(x, a)$-reroute of that path and $C_3$.
Then no vertex in $B$ can have 3 consecutive neighbors in that $C_i$, so every vertex in $B - v_3$ has at most 2 neighbors in that $C_i$ and thus at least 3 neighbors in either $C_1$ or $C_2$ (whichever one does not contain $x$).
Letting $y$ be the middle vertex of the 3 neighbors of any vertex in $B - v_3$ in this $C_j$, a similar argument shows that $y$ has no neighbor in $A$, so every vertex in $A - u_3$ has at most 2 neighbors in $C_j$ and thus at least 3 neighbors in $C_i$.
That is, either $A - u_3$ or $B - v_3$ is complete to $C_1$, and $C_1$ does not contain $v$.
If it is $A - u_3$ that is complete to $C_1$, then $A - u_3$, together with $v$ and with any two adjacent vertices on $C_1$, would be a 7-clique, a contradiction.
Thus $B - v_3$ must be complete to $C_1$, and so every vertex of $A - u_3$ must have 3 consecutive neighbors in $C_2$.
But then $A - u_3$ is complete to the middle two vertices of $C_2$, and these six vertices are complete to $v$, again giving us a 7-clique.

Thus $A$ is not a 5-clique, and, by symmetry, $B$ is not a 5-clique.
Let $a \in A_0 - u_3$ and $b \in B_0 - v_3$.
Then $a$ has at most 3 neighbors in $A$ and thus exactly 7 neighbors in $C'$: it is complete to $C_0 \cup C_1$ and has 3 consecutive neighbors in $C_2$.
If $|B| = 5$, then $b$ is likewise complete to $C_0 \cup C_1$.
But then, letting $x$ be the middle vertex of $C_1$, $x$ has no neighbor in $(A \cup B) - \{ u_3, v_3, a, b \}$, otherwise we can perform an $(x, a)$- or $(x, b)$-reroute of $C_1$ and $C_3$.
So $x$ has 2 neighbors in $A \cup B$, 2 neighbors in $C_1$, at most 1 neighbor in $C_0$, and at most 4 neighbors in $C_2$, for a total of $9 < \delta(H)$ neighbors, a contradiction (note that $V(H) = A \cup B \cup C$, as any vertex outside of $A \cup B$ must have all of its neighbors in $C$ by definition, but $|C| < \delta(H)$, so this is impossible).
Thus $|B| = 6$; now $b$ must have exactly 4 neighbors in $B$ and exactly 6 neighbors in $C'$, comprising 1 neighbor in $C_0$, 2 neighbors in $C_1$, and 3 neighbors in $C_2$.
If $a$ and $b$ have the same 3 neighbors in $C_2$, counting the neighbors of the middle vertex of those 3 common neighbors gives us the same contradiction, so, if we write $C_2 = u_2 x y v_2$, we may assume $a$ is complete to $\{ u_2, x, y \}$ and $b$ is complete to $\{ x, y, v_2 \}$.
Then $x$ has no neighbor in $A - a$, otherwise we can perform an $(x, a)$-reroute of $C_2$ and $C_3$, and $y$ has no neighbor in $B - b$, otherwise we can perform a $(y, b)$-reroute of $C_2$ and $C_3$.
Then every vertex in $B - \{ b, v_3 \}$ has at most 2 neighbors in $C_2$ and at most 2 neighbors in $C_3$, thus each one must have exactly 5 neighbors in $C'$ ($u_0, u_1, v_1$, $x$, and either $u_2$ or $v_2$) and exactly 5 neighbors in $B$; that is, the sole non-neighbor of $b$ in $B$ must be $v_3$.
Moreover, each of $b$ and $v_3$ must have exactly 4 neighbors in $B$ and exactly 6 neighbors in $C'$ (both are complete to $\{ u_0, u_1, v_1, x, v_2 \}$, $b$ is adjacent to $y$, and $v$ is adjacent to $u_2$).
Note that, because $u_1 v_1 \notin E(H)$ and the middle vertex of $C_1$ has no neighbor in $B$, $v \notin C_1$ and thus $v = u_0$.
But then $(B - v_3) \cup \{ u_0, u_1 \}$ is a 7-clique, a contradiction.
\end{proof}
\begin{case}
$|C'| = 9$.
\end{case}
\begin{proof}
Since $|C_0| = 1$ and $|C_2| \leq 5$, we must have $|C_1| \geq 3$.
Suppose $|C_1| = 3$.
Then every vertex in $A - u_3$ has at most 7 neighbors in $C'$ and thus at least 3 neighbors in $A$.
If $|A| = 4$, then $A$ is a 4-clique, and every vertex in $A - u_3$ has exactly 7 neighbors in $C'$; in particular, $A - u_3$ is complete to $C_0 \cup C_1$.
Then the middle vertex of $C_1$ is anticomplete to $B$ (otherwise, we can reroute $C_1$ through any vertex of $A - u_3$), so $v \notin C_1$.
If $u_3$ has 2 adjacent neighbors in $C_1$, then those 2 neighbors, together with $A$ and $v$, form a 7-clique, so $u_3$ must not be adjacent to the middle vertex of $C_1$.
Then $u_3$ has at most 3 neighbors in $C_0 \cup C_1$ and at most 3 neighbors in $A$, so it has at least 4 neighbors in $C_2$; in particular, at least 2 of the interior vertices of $C_2$ are adjacent to $u_3$.
Note that no interior vertex of $C_2$ is adjacent to both $u_3$ and a vertex in $B \cap N[v_3]$; if it were, then it would allow us to connect $u_3$ to $v_3$ with a path on at most 4 vertices, which would be shorter than $C_2$, contrary to the minimality of $C$.
This implies that every vertex in $B \cap N(v_3)$ has at most 1 interior neighbor in $C_2$; applying Lemma~\ref{l:si}(b), we can see that every vertex in $B \cap N(v_3)$ must then have at most 2 total neighbors in $C_2$, at most 1 neighbor in $C_0$, and at most 2 neighbors in $C_1$, hence at least 5 neighbors in $B$.
We then have $6 \leq |B| \leq |H| - |A| - |C'| \leq 19 - 4 - 9 = 6$, so $|B| = 6$ exactly, and every vertex in $B$ that is adjacent to $v_3$ is complete to $B$.
The vertex $v_3$ has at most 3 neighbors in $C_2$ (the two endpoints and at most 1 interior vertex of $C_2$ that is not adjacent to $u_3$), at most 2 neighbors in $C_1$, and at most 1 neighbor in $C_0$, so it has at least 4 neighbors in $B$.
Since $B$ cannot be a 6-clique (otherwise, $B \cup \{ v \}$ would be a 7-clique), $v_3$ must have exactly 4 neighbors in $B$, so $H[B]$ is a $K_6$ with a single edge (call it $v_3 b$) deleted.
The vertex $b$ has exactly 4 neighbors in $B$ and at most 3 neighbors in $C_0 \cup C_1$, so it has exactly 3 neighbors in $C_2$.
Recall that every vertex in $A - u_3$ has exactly 7 neighbors in $C'$ and thus exactly 3 neighbors in $C_2$.
If we write $C_2 = u_2 x y z v_2$, then this implies that $A - u_3$ is complete to $y$; this, in turn, implies that no $a \in A - u_3$ is complete to $\{ x, y, z \}$, otherwise we can perform a $(y, a)$-reroute of $C_2$ and $C_3$.
Then every vertex of $A - u_3$ is complete to either $\{ u_2, x, y \}$ or $\{ y, z, v_2 \}$; we may assume without loss of generality that at least 2 vertices of $A - u_3$ are complete to $\{ u_2, x, y \}$.
Then $x$ has no neighbor in $B$, so the neighbors of $b$ on $C_2$ must then be $y, z, v_2$.
But then, if any $a \in A - u_3$ is complete to $\{ y, z, v_2 \}$, the vertex $z$ must have no neighbor in $(A \cup B) - \{ a, b \}$, otherwise we can perform a $(z, a)$- or $(z, b)$-reroute of $C_2$ and $C_3$.
Then $z$ has 2 neighbors in $A \cup B$, 2 neighbors in $C_2$, and at most 4 neighbors in $C_0 \cup C_1$ (note that $|H| \leq 19 = |C'| + |A| + |B|$, so $V(H) = A \cup B \cup C'$), so $d(z) \leq 8 < \delta(H)$, a contradiction.
Thus $A - u_3$ is complete to $\{ u_2, x, y \}$.
If $u_3$ is complete to 2 adjacent vertices in the set $\{ u_2, x, y \}$, then those 2 vertices, together with $A$ and $v$, give us a 7-clique.
If not, then the neighbors of $u_3$ in $C_2$, of which there are at least 4, must be exactly $u_2, y, z, v_2$.
As previously observed, no vertex in $B \cap N[v]$ can be adjacent to $y$ or $z$.
The vertex $z$ then has 2 neighbors in $C_2$, at most 4 neighbors in $C_0 \cup C_1$, 1 neighbor in $A$ (namely $u_3$), and 1 neighbor in $B$ (namely $b$), so $d(z) \leq 8 < \delta(H)$, a contradiction.

Thus we may assume $|A| \geq 5$, and, by symmetry, $|B| \geq 5$.
Since $|A \cup B| \leq |H| - |C'| \leq 19 - 9 = 10$, we must then have $|A| = |B| = 5$.
Every vertex in $A$ then has at most 4 neighbors in $A$ and thus at least 6 neighbors in $C'$; the same is true for $B$.
Suppose some $a \in A - u_3$ and some $b \in B - v_3$ are both complete to $C_1$.
Then the middle vertex $x$ of $C_1$ has no neighbor in $(A \cup B) - \{ u_3, v_3, a, b \}$, otherwise we can perform an $(x, a)$- or $(x, b)$-reroute of $C_1$ and $C_3$.
Since $V(H) = A \cup B \cup C'$ and $x$ has at most 2 neighbors in $A \cup B$, 2 neighbors in $C_1$, and at most 1 neighbor in $C_0$, it must have exactly 5 neighbors in $C_2$; that is, it must be complete to $C_2$.
But then we can replace $C_1$ with $u_1 a v_1$ and replace $C_2$ with $u_2 x v_2$ to get a choice of $C$ with fewer vertices, contrary to the minimality of $C$.
Thus, we may assume without loss of generality that no vertex in $A - u_3$ is complete to $C_1$, which implies that every vertex in $A - u_3$ has exactly 1 neighbor in $C_0$, 2 neighbors in $C_1$, 3 neighbors in $C_2$, and 4 neighbors in $A$.

We claim that a vertex in $B - v_3$ has 3 neighbors in $C_2$.
If not, then every vertex in $B - v_3$ must have 2 neighbors in $C_2$, 1 neighbor in $C_0$, 3 neighbors in $C_1$, and 4 neighbors in $B$, so that $B$ is a 5-clique.
Then the middle vertex $x$ of $C_1$ has no neighbor in $A$ (otherwise, for any $b \in B - v_3$, we can perform an $(x, b)$-reroute of $C_1$ and $C_3$), so $v$ must be the sole vertex of $C_0$.
But then taking any 2 adjacent vertices on $C_1$, together with $v$ and the 4 vertices of $B - v_3$, gives us a 7-clique, a contradiction.

Let $a \in A - u_3$ and $b \in B - v_3$ each have 3 neighbors in $C_2$.
If we write $C_2 = u_2 x y z v_2$, then we claim that no vertex in $(A \cup B) - \{ u_3, v_3 \}$ is complete to $\{ x, y, z \}$.
Since every vertex in $A - u_3$ has 3 consecutive neighbors in $C_2$, $A - u_3$ is complete to $y$, so, if some $a' \in A - u_3$ is complete to $\{ x, y, z \}$, we can perform a $(y, a')$-reroute of $C_2$ and $C_3$.
Then every vertex in $A - u_3$ is complete to either $\{ u_2, x, y \}$ or $\{ y, z, v_2 \}$, so we either have at least 2 vertices complete to $\{ u_2, x, y \}$, in which case $x$ is anticomplete to $B$, or we have at least 2 vertices complete to $\{ y, z, v_2 \}$, in which case $z$ is anticomplete to $B$; either way, no vertex of $B$ can be complete to $\{ x, y, z \}$.
If $a$ and $b$ are complete to the same 3 vertices in $C_2$, say $\{ u_2, x, y \}$, then $x$ can have no neighbor in $(A \cup B) - \{ a, b \}$, otherwise we can perform an $(x, a)$- or $(x, b)$-reroute of $C_2$ and $C_3$.
So $x$ has 2 neighbors in $A \cup B$, 2 neighbors in $C_2$, and at most 4 neighbors in $C_0 \cup C_1$ (note that, because $|H| \leq 19 = |A| + |B| + |C'|$, we have $V(H) = A \cup B \cup C'$), so $d(x) = 8 < \delta(H)$, a contradiction.
Thus we may assume without loss of generality that $a$ is complete to $\{ u_2, x, y \}$ and $b$ is complete to $\{ y, z, v_2 \}$.
This same argument shows that no vertex of $A$ can be complete to $\{ y, z, v_2 \}$; since we have shown that every vertex in $A - u_3$ has 3 neighbors in $C_2$, it follows that $A - u_3$ is complete to $\{ u_2, x, y \}$.
Moreover, every vertex in $A - u_3$ has exactly 6 neighbors in $C'$ and thus exactly 4 neighbors in $A$, so $A$ is a 5-clique.
But then $A \cup \{ u_2, x, v \}$ is a 7-clique, a contradiction.

Now we may assume $|C_1| \neq 3$; this implies $|C_1| = |C_2| = 4$.
As before, every vertex in $A$ has at most 7 neighbors in $C'$ and thus at least 3 neighbors in $A$.
This implies that every vertex in $A - u_3$ has 3 neighbors each in $C_1$ and in $C_2$.
Let $a, a' \in A - u_3$; let $x$ be the middle vertex of the 3 neighbors of $a$ on $C_1$, and let $y$ be the middle vertex of the 3 neighbors of $a$ on $C_2$.
Then $a'$ is adjacent to both $x$ and $y$ as well, so $\{ x, y \}$ is anticomplete to $B$, otherwise we can perform an $(x, a)$-reroute of $C_1$ and $C_3$ or else a $(y, a)$-reroute of $C_2$ and $C_3$.
Then every vertex in $B - v_3$ has at most 2 neighbors in $C_1$, at most 2 neighbors in $C_2$, and at most 1 neighbor in $C_0$, so it has at least 5 neighbors in $B$.
Since $|B| \leq |H| - |A| - |C'| \leq 19 - 4 - 9 = 6$, $B$ must then be a 6-clique, so that $B \cup \{ v \}$ is a 7-clique, a contradiction.

Now we must have $|A| = |B| = 5$.
Every vertex in $A$ has at most 4 neighbors in $A$ and thus at least 6 neighbors in $C'$, so it has 3 neighbors in $C_1$ or 3 neighbors in $C_2$.
By the pigeonhole principle, we may assume that 2 vertices of $A - u_3$ each have 3 neighbors in $C_1$; if we write $C_1 = u_1 x_1 y_1 v_1$, we may assume some $a \in A - u_3$ is complete to $\{ u_1, x_1, y_1 \}$.
Then $x_1$ has no neighbor in $B$, otherwise we can perform an $(x_1, a)$-reroute of $C_1$ and $C_3$.
This implies that every vertex of $B - v_3$ must have at most 2 neighbors in $C_1$ and thus at least 3 neighbors in $C_2$.
Since every vertex of $B - v_3$ then has at most 6 neighbors in $C'$, each one has at least 4 neighbors in $B$, so $B$ is a 5-clique.
Moreover, every vertex of $B - v_3$ must be complete to the same 3 vertices in $C_2$: if we write $C_2 = u_2 x_2 y_2 v_2$ and assume that some $b \in B - v_3$ is complete to $\{ u_2, x_2, y_2 \}$ and some $b' \in B - v_3$ is complete to $\{ x_2, y_2, v_2 \}$, then neither $x_2$ nor $y_2$ can have a neighbor in $A$, otherwise we can perform an $(x_2, b)$- or $(y_2, b')$-reroute of $C_2$ and $C_3$.
But then every vertex of $A$ would have at most 1 neighbor in $C_2$, contrary to the fact that every vertex in $A$ has at least 6 neighbors in $C'$.
Thus, we may assume $B - v_3$ is complete to $\{ u_2, x_2, y_2 \}$.
The vertex $v_3$ has exactly 4 neighbors in $B$, at most 1 neighbor in $C_0$, and at most 3 neighbors in $C_1$, so it must have at least 2 neighbors in $C_2$, which means it has a neighbor in $\{ u_2, x_2, y_2 \}$.
This neighbor, together with $B$ and $v$, gives us a 7-clique.
\end{proof}
\begin{case}
$|C'| = 10$.
\end{case}
\begin{proof}
We have $|C_1| = 4, |C_2| = 5$; note that $v$ must be the sole vertex of $C_0$.
We also have $|A \cup B| \leq |H| - |C'| \leq 19 - 10 = 9$, so we may assume $|A| = 4$ and $|B| \in \{ 4, 5 \}$.
By Lemma~\ref{l:si}(b) every vertex in $A - u_4$ has at most 7 neighbors in $C'$ (1 in $C_0$ and 3 each in $C_1$ and $C_2$), so it has at least $\delta(H) - 7 \geq 3$ neighbors in $A$, so that $A$ is a 4-clique.
This implies that every vertex in $A$ has exactly 3 neighbors in $A$ and thus exactly 7 neighbors in $C'$.
If we write $C_1 = u_1 x_1 x_2 v_1$, then every vertex in $A - u_3$ is complete to either $\{ u_1, x_1, x_2 \}$ or $\{ x_1, x_2, v_1 \}$; we may assume there is $a \in A$ that is complete to $\{ u_1, x_1, x_2 \}$.
Then $x_1$ has no neighbor in $B$, otherwise we can perform an $(x_1, a)$-reroute of $C_1$ and $C_3$.
This means that no vertex in $B - v_3$ has 3 consecutive neighbors on $C_1$, hence each such vertex has at most 2 neighbors total on $C_1$ and at most 6 neighbors in $C'$.
Thus each vertex in $B - v_3$ has 4 neighbors in $B$, so $B$ must be a 5-clique: every vertex in $B - v_3$ has exactly 4 neighbors in $B$ and thus exactly 5 neighbors in $C'$, which must be 1 neighbor in $C_0$, 2 in $C_1$, and 3 in $C_2$.
This implies that $x_2$ is complete to $B - v_3$, which, in turn, implies that no $a \in A - u_3$ is complete to $\{ x_1, x_2, v_1 \}$, otherwise we could perform an $(x_2, a)$-reroute of $C_1$ and $C_3$.
If $u_3$ is adjacent to $x_1$, then $A \cup \{ x_1, x_2, v \}$ would be a 7-clique, a contradiction.
Thus $u_3$ is not adjacent to $x_1$, so that $u_3$ has at most 3 neighbors in $C_1$ and thus at least 3 neighbors in $C_2$.

Write $C_2 = u_2 y_1 y_2 y_3 v_2$.
Then every vertex of $(A \cup B) - \{ u_3, v_3 \}$ is complete to either $\{ u_2, y_1, y_2 \}$, $\{ y_1, y_2, y_3 \}$, or $\{ y_2, y_3, v_2 \}$.
Since $y_2$ is complete to $(A \cup B) - \{ u_3, v_3 \}$, no vertex $w \in (A \cup B) - \{ u_3, v_3 \}$ is complete to $\{ y_1, y_2, y_3 \}$, otherwise we could perform a $(y_2, w)$-reroute of $C_2$ and $C_3$.
We assume without loss of generality that two vertices in $A - u_3$ are complete to $\{ u_2, y_1, y_2 \}$.
Then $y_1$ has no neighbor in $B$: otherwise, for any $a \in A - u_3$ that is complete to $\{ u_2, y_1, y_2 \}$, we can perform a $(y_1, a)$-reroute of $C_2$ and $C_3$.
It follows that every vertex in $B - v_3$ is complete to $\{ y_2, y_3, v_2 \}$, so that $y_3$ is anticomplete to $A$ and so every vertex in $A - u_3$ is complete to $\{ u_2, y_1, y_2 \}$.
In particular, $u_3$ is not adjacent to $y_3$; moreover, $u_3$ is not adjacent to $y_2$, otherwise, for any $b \in B - b_3$, we could replace $C_2$ with $\{ u_2, v_2 \}$ and replace $C_3$ with $u_3 y_2 b v_3$ to get a choice of $C$ with fewer vertices.
Therefore, the three neighbors of $u_3$ on $C_3$ are $\{ u_2, y_1, v_2 \}$.
This implies that $A \cup \{ u_2, y_1, v \}$ is a 7-clique, a contradiction.
\end{proof}
\end{proof} 
\end{proof} 

\end{document}